\definecolor{e-mail}{rgb}{0,.40,.80}
\definecolor{reference}{rgb}{.20,.60,.22}
\definecolor{citation}{rgb}{0,.40,.80}
\date{}
\definecolor{todo}{rgb}{1,0,0}
\definecolor{answer}{rgb}{0,0,1}
\newcommand{\rd}[1]{\textcolor{answer}{#1}}
\title{Tannakian categories with semigroup actions\thanks{This work was partially supported by the NSF grants CCF-0952591 and DMS-1413859.}}
\author{Alexey Ovchinnikov  and Michael Wibmer \vspace{0.05in}  \\ \small 
CUNY Queens College, Department of Mathematics,
65-30 Kissena Blvd, Queens, NY 11367, USA \\ \small
CUNY Graduate Center, Department of Mathematics, 365 Fifth Avenue,
New York, NY 10016, USA\\ \small
\href{mailto:aovchinnikov@qc.cuny.edu}{aovchinnikov@qc.cuny.edu} \vspace{0.05in} \\ \small 
RWTH Aachen, 52056, Aachen, Germany\\ \small
\href{mailto:michael.wibmer@matha.rwth-aachen.de}{michael.wibmer@matha.rwth-aachen.de}
\vspace{-0.25in}}
\newtheorem{theo}{Theorem}[section]
\newtheorem{lemma}[theo]{Lemma}
\newtheorem{prop}[theo]{Proposition}
\newtheorem{cor}[theo]{Corollary}
\theoremstyle{definition}
\newtheorem{ex}[theo]{Example}
\newtheorem{defi}[theo]{Definition}
\newtheorem{rem}[theo]{Remark}
\numberwithin{theo}{section}
\numberwithin{equation}{subsection}
\def\N{\mathbb{N}}
\def\Hom{\operatorname{Hom}}
\def\Gl{\operatorname{GL}}
\def\<{\langle}
\def\>{\rangle}
\def\ord{\operatorname{ord}}
\DeclareMathOperator{\id}{id}
\DeclareMathOperator{\Span}{span}
\DeclareMathOperator{\Ob}{\mathcal Ob}
\DeclareMathOperator{\MorC}{\mathrm Mor}
\DeclareMathOperator{\GL}{\mathrm GL}
\newcommand{\f}{\phi}
\newcommand{\Zb}{\mathbb{Z}}
\newcommand{\Qb}{\mathbb{Q}}
\newcommand{\Cb}{\mathbb{C}}
\newcommand{\C}{\mathsf{C}}
\newcommand{\D}{\mathsf{D}}
\newcommand{\vect}{\mathsf{Vect}}
\newcommand{\alg}{\mathsf{Alg}}
\newcommand{\groups}{\mathsf{Groups}}
\newcommand{\rep}{\mathsf{Rep}}
\newcommand{\hs}{^g\!}
\newcommand{\End}{\operatorname{End}}
\def\wtilde{\widetilde}
\renewcommand{\mod}{\mathsf{Mod}}
\renewcommand{\1}{\mathds{1}}
\begin{document}

\maketitle

\abstract{
Ostrowski's theorem implies that $\log(x),\log(x+1),\ldots$ are algebraically independent over $\Cb(x)$. More generally, for a linear differential or difference equation, it is an important problem to find all algebraic dependencies among a non-zero solution $y$ and particular transformations of $y$, such as derivatives of $y$ with respect to parameters, shifts of the arguments, rescaling, etc. In the present paper, we develop a theory of Tannakian categories with semigroup actions, which will be used to attack such questions in full generality, as each linear differential equation gives rise to a Tannakian category.
Deligne studied actions of braid groups on categories and obtained a finite collection of axioms that characterizes such actions to apply it to various geometric constructions. In this paper, we find a finite set of axioms that characterizes actions of semigroups that are finite free products of semigroups of the form $\N^n\times \Zb/{n_1}\Zb\times\ldots\times\Zb/{n_r}\Zb$ on Tannakian categories. This is the class of semigroups that appear in many applications.}

\section{Introduction}
It is an important problem, for a linear differential or difference equation, to find all algebraic dependencies among a non-zero solution $y$ and particular transformations of $y$, such as derivatives of $y$ with respect to parameters, shifts of the arguments, rescaling, etc. The simplest example that illustrates this is: $\log(x)$ satisfies $y'=1/x$, while it follows from Ostrowski's theorem \cite{Ostrowski} that $\log(x),\log(x+1),\ldots$ are algebraically independent over $\Cb(x)$. It turns out that this information is contained in the Galois group associated with this differential equation~\cite{DHWDependence,DiVizioHardouinWibmer:DifferenceGaloisofDifferential}, which is a difference algebraic group, that is, a subgroup of $\GL_n$ defined by a system of polynomial difference equations. 
Other important natural examples include:
\begin{itemize}
\item the Chebyshev polynomials $T_n(x)$. They are solutions of linear differential equations $$\big(1-x^2\big)y'' - xy' + n^2y = 0$$ and, in addition, satisfy the following difference (with respect to the endomorphisms $\sigma$, $\sigma_1$, and $\sigma_2$ specified below) algebraic relations (among many other ones): 
\begin{align*}
T_{n+1}&=2xT_n(x)-T_{n-1}(x), &&\sigma(n)=n+1,\\ 
T_{2n+1}(x)&=2T_{n+1}(x)T_n(x)-x, &&\sigma_1(n)=2n,\ \sigma_2(n)=n+1,\\
T_{2n}(x)&=T_n\big(2x^2-1\big), &&\sigma_1(n)=2n,\ \sigma_2(x)=2x^2-1,\\
T_n(T_m(x))&=T_{nm}(x), &&\sigma_1(x)=T_m(x),\ \sigma_2(n)=mn.
\end{align*}
\item the hypergeometric function $_2F_1(a,b,c;z)$, which is a solution of the parameterized linear differential equation
$$
z(1-z)y''+\big(c-(a+b+1)z\big)y'-aby=0,
$$ and also satisfy the following difference  algebraic relation, called the Pfaff transformation, (among many other ones): 
$$
_2F_1(a,b,c;z) = (1-z)^{-a}{_2F_1}\big(a,c-b,c;z/(z-1)\big),\quad \sigma_1(b)=c-b,\ \sigma_2(z)=z/(z-1).
$$
\item Kummer's function of the first kind (confluent hypergeometric function) $_1F_1(a;b;z)$ is a solution of 
$$
zy''+(b-z)y'-ay=0.
$$
It satisfies the difference algebraic relation
$$
e^x{_1F_1}(a;b;-z)={_1F_1}(b-a;b;z),\quad \sigma_1(z)=-z,\ \sigma_2(a)=b-a.
$$
\item  the Bessel function $J_\alpha(x)$, which is a solution of the parameterized linear differential equation $$x^2y''(x)+xy'(x)+\big(x^2-\alpha^2\big)y(x)=0,$$ also satisfies, for example,
\begin{align*}
x J_{\alpha+2}(x) &=2(\alpha+1)J_{\alpha+1}(x)-xJ_\alpha(x), && \sigma(\alpha)=\alpha+1\\
J_\alpha(-x)&=(-1)^\alpha J_\alpha(x), &&\sigma(x)=-x.
\end{align*}
\end{itemize}
In all these cases, semigroups arise as the semigroups generated by the given endomorphisms (they are not always automorphisms). The resulting semigroups in all but one case are free commutative and finitely generated, with the exception of one example of the pair of automorphisms $\sigma_1(n) = 2n$ and $\sigma_2(n)=n+1$, which generates a Baumslag--Solitar group \cite{BaumslagSolitar}. However, this action of this particular semigroup can be viewed as an action of a commutative semigroup, as in our Example~\ref{ex:commuteandnot}. In addition, we show in Example~\ref{ex:hyper}  how the classical contiguity relations for the hypergeometric functions are reflected in our Tannkian approach. The $q$-difference analogue of the hypergeometric functions studied in this framework can be found in~\cite{OvAAM}.

Moreover,
such recurrence relations are not only of interest from the point of view of analysis and special functions, but, as emphasized in~\cite{Vilenkin-Klimyk,Koornwinder}, they also appear in the representation theory of Lie groups: they are encoded in the properties of tensor products of representations, including decompositions of tensor products into the irreducible components (e.g., Clebsch--Gordan coefficients).

In the present paper, we develop a theory of Tannakian categories with semigroup actions, which will be used to attack such questions, including finding such relations, in their full generality in the future using the Galois theory of linear differential and difference equations with semigroup actions. In this approach, given a linear differential or difference equation and a semigroup $G$, one constructs a particular Tannakian category with an action of $G$. Our Theorem~\ref{theo: sTannakianmain} shows that, if such a Tannakian category has a neutral $G$-fiber functor, then this category is equivalent to the category of representations of a difference algebraic group. This group is the one that will measure the algebraic dependencies mentioned above.

In practice, the semigroup $G$ is usually infinite and, therefore, its action on a category (see Definition~\ref{def:actionmain}) is defined by infinitely many functors and commutative diagrams, which is inconvenient in applications.
However, in \cite{Deligne1997}, Deligne studied actions of  braid groups on categories and obtained a finite collection of axioms that characterizes such actions. Tannakian categories with group actions (among other things) were first introduced in \cite{Kamensky:TannakianFormalismOverFieldsWithOperators}, but the finiteness questions were not considered there, because a different kind of applications was studied. 

In the present paper, we find a finite set of axioms that characterizes actions of semigroups that are free products of  semigroups of the form $\N^n\times \Zb/{n_1}\Zb\times\ldots\times\Zb/{n_r}\Zb$ on Tannakian categories. Even if $G$ is given by a finite set of generators and relations, as in \cite[Section~1.3]{Deligne1997}, in our case, it is not sufficient just to define actions of generators of $G$ and impose the constraints corresponding to the relations (see Example~\ref{ex:counter}) -- our hexagon axiom~\ref{eq:i1i2i3} provides necessary and sufficient extra constraints, as we show in Theorem~\ref{thm:main0}. Moreover, our approach includes more actions than one might expect:  a commutative semigroup can act in a non-commutative way on a differential equation, which is commutative only up to a gauge transformation (see Example~\ref{ex:commuteandnot}). This is the first time that such a scenario has been proposed.
The main application of our result will be to finding all algebraic dependencies among the elements of orbits of solutions of linear difference and differential equations under actions of chosen semigroups. 

This application will be possible after the parameterized Galois theories of linear differential and difference equations with semigroup actions are fully developed. So far, this has been done for the simplest case of the semigroup $\N$ in~\cite{DHWDependence,DiVizioHardouinWibmer:DifferenceGaloisofDifferential,OW} (that is, in the case of one difference parameter). The main method used in these papers was difference parameterized Picard--Vessiot rings (which correspond to neutral difference fiber functors for Tannakian categories \cite{Moshe}) that were constructed in a particular way, which does not directly generalize to arbitrary semigroups. This motivates the new approach to the problem that we take up in the present paper.

In the case of differential Galois theory with differential parameters, constructions similar to those mentioned above were used in~\cite{Wibmer:existence} to construct Picard--Vessiot extensions with one differential parameter. However, there were obstacles to generalizing this particular construction to several differential parameters as well. 
Such difficulties have recently been overcome in~\cite{GGO} by introducing actions of Lie rings on Tannakian categories (first appeared as differential tensor and Tannakian categories for one derivation~\cite{difftann,OvchTannakian,Moshe} and several commuting derivations~\cite{diffreductive}) and applying  geometric arguments to the constructions from~\cite{Deligne:categoriestannakien} to construct Picard--Vessiot extensions for several differential parameters (not necessarily commuting) under assumptions that are most practical for applications. The authors expect that the results of the present paper on actions of semigroups (instead of Lie rings) on Tannakian categories will lead to a construction of Picard--Vessiot rings with semigroup actions (that is, with several difference parameters, not necessarily commuting) with immediate practical applications in the nearest future. This includes the problem of difference isomonodromy~\cite{OvAAM}, which awaits the full development of the Picard--Vessiot theory with semigroup actions.

The paper is organized as follows. We give an overview of the constructions from difference algebra that we use in the paper in Section~\ref{sec:differencealgebra}. This is followed by Section~\ref{subsec: Basic constructions}, in which we recall difference algebraic groups and the basic constructions from their representation theory. Section~\ref{sec: Difference Tannakian categories} contains a brief review of Tannakian categories in Section~\ref{sec:revtann}, followed by Section~\ref{sec:semigroupaction}, containing an introduction to semigroup actions on categories and our main technical tool, Theorem~\ref{thm:main0}. Semigroup actions on tensor categories are described in Section~\ref{sec:semigrouptensor}, which is followed by our main result, Theorem~\ref{theo: sTannakianmain}, in Section~\ref{sec:semigrouptannakian}. We conclude with Section~\ref{sec:extra}, in which we give a representation theoretic characterization of a difference group scheme being a linear difference algebraic group. 

\vspace{-0.1in}
\section{Basic Definitions}\label{sec:differencealgebra}
\subsection{Difference algebra}
In this section, we will introduce the generalization of the standard difference algebra with one and several endo- or automorphisms~\cite{Cohn:difference,Levin:difference} that we need.
Let $G$ be a semigroup. In what follows, we will assume that $G$ has an identity element, which we will denote by $e$. If $G$ and $G'$ are semigroups, $e$ and $e'$ are their identity elements, 
and $\varphi : G\to G'$ is a semigroup homomorphism, we will assume that $\varphi(e)=e'$. In what follows, the semigroups $\N = (\{0,1,2,\ldots\},+)$ and $\Zb/r\Zb = (\{0,1,\ldots,r-1\}, +\, \mathrm{mod}\, r)$, $r \geq 1$. The semigroup of ring endomorphisms of a ring $k$ is denoted by $\End(k)$.
\begin{defi}
A {\em $G$-ring ($G$-field)} is a commutative ring (field) $k$ together with a semigroup homomorphism $T_k : G \to \End(k)$. For each $g \in G$, we also call the pair $\big(k, T_k(g)\big)$ a $g$-ring (field), write $g : k\to k$ instead of $T_k(g) : k\to k$ for simplicity (and to follow the general convention in difference algebra). 
\end{defi}

\begin{ex} Let $G = \N$ and $k = \Cb(x)$. Then $$T_1(a)(x) := x+a,\ \ T_2(a)(x) = 2^ax,\ \ T_3(a)(x) := \pi^ax,\ \ \text{and}\ \  T_4(a)(x) := x^{(2^a)},\quad a \in \N,$$ induce homomorphisms $T_1$, $T_2$, $T_3$, and $T_4$ from $G$ to $\End(k)$. Note that $T_1$ and $T_3$ also induce a homomorphism $\N*\N \to \End(k)$, where $*$ denotes the free product of semigroups, and $T_2$ and $T_3$ induce a homomorphism $\N\times\N \to \End(k)$.
\end{ex}

\begin{defi} A {\it morphism} of two $G$-rings $(R, T_R)$ and $(S, T_S)$ is a ring homomorphsim $\varphi : R \to S$ such that, for all $g \in G$, $\varphi\circ T_R(g) = T_S(g)\circ\varphi$.
\end{defi}

Let $k$ be a $G$-field.
\begin{defi} A $k$-$G$-{\it algebra} is a $k$-algebra $R$ such that $R$ is a $G$-ring and  $k\to R$ is a morphism of $G$-rings.  
\end{defi}

A morphism of $k$-$G$-algebras is a morphism of $k$-algebras that is a morphism of $G$-rings. The category of $k$-$G$-algebras is denoted by $k$-$G$-$\alg$.

\begin{defi} A $k$-$G$-algebra $(R,T_R)$ is called {\it finitely generated} if there exists a finite set $S\subset R$ such that $R$ is generated by the set $\{T(g)(s)\:|\: g\in G,\, s\in S\}$.
\end{defi}

The ring of $G$-polynomials with coefficients in $k$ in $G$-indeterminates $y_1,\ldots,y_n$ is the ring  \vspace{-0.05in} $$k\{y_1,\ldots,y_n\}_G := k\big[y_{i,g}: g \in G,1\le i \le n\big]$$ (here, $y_{i,e} = y_i$, $1\le i\le n$), with the $G$-structure given by \vspace{-0.05in}
$$h(y_{i,g}) := y_{i,hg},\quad g,h\in G,\ \ 1\le i \le n.\vspace{-0.17in}$$
\subsection{Difference algebraic groups and their representations}\label{subsec: Basic constructions}

Let $G$ be a semigroup and $k$ be a $G$-field.  In this section, we will introduce group $k$-$G$-schemes and their representations, followed by the basic constructions with the latter in Section~\ref{sec:constructions}. This is a straightforward but important generalization of difference algebraic groups studied in~\cite[Appendix]{DiVizioHardouinWibmer:DifferenceGaloisofDifferential} and \cite[Section~4.1]{Kamensky:TannakianFormalismOverFieldsWithOperators} (see also the references given in these papers).

\begin{defi}
A {\em group $k$-$G$-scheme} $H$ is a functor from the category of $k$-$G$-algebras to the category of groups that is representable. A group $k$-$G$-scheme $H$ is called a {\it $G$-algebraic group} if the $k$-$G$-algebra that represents $H$ is finitely generated.
\end{defi}
If $H$ is a group $k$-$G$-scheme, the $k$-$G$-algebra that represents $H$ is denoted by $k\{H\}$. A morphism of group $k$-$G$-schemes is a morphism of functors. If $\f\colon H\to H'$ is a morphism of group $k$-$G$-scheme, the dual morphism is denoted by $\f^*\colon k\{H'\}\to k\{H\}$.

\begin{rem}The category of group $k$-$G$-schemes is anti--equivalent to the category of $k$-$G$-Hopf-algebras, which are $k$-Hopf algebras such that all structure homomorphisms commute with $T(g)$, $g \in G$.
\end{rem}

Let $k$ be a $G$-field and $H$ a group $k$-$G$-scheme (similar for $k$-$g$-scheme for $g \in G$). 
\begin{defi}
A {\em representation} of $H$ is a pair $(V,\f)$ comprising a finite-dimensional $k$-vector space $V$ and a morphism $\f\colon H\to \Gl(V)$ of group $k$-$G$-schemes.
\end{defi}
Here $\Gl(V)$ is the functor that associates to a $k$-$G$-algebra $R$ the group of all $R$-linear automorphisms of $V\otimes_k R$. It is represented by the $k$-$G$-algebra $k\big\{x_{11},\ldots,x_{nn},1/\det(x_{ij})\big\}_G$, where $n = \dim V$.
We will often omit $\f$ from the notation. 
\begin{defi}
A \emph{morphism $(V,\f)\to \big(V',\f'\big)$ of representations of $H$} is a $k$-linear map $f\colon V\to V'$ that is $H$-equivariant, i.e.,
\[\xymatrix{
 V\otimes_k R \ar^{f\otimes R}[r] \ar_{\f(h)}[d] & V'\otimes_k R \ar^{\f'(h)}[d] \\
 V\otimes_k R \ar^{f\otimes R}[r] & V'\otimes_k R
}
\]
commutes for every $h\in H(R)$ and any $k$-$G$-algebra $R$.
\end{defi}
The resulting category is denoted by $\rep(H)$.

\begin{rem}
$\rep(H)$ is equivalent to the category of finite-dimensional comodules over $k\{H\}$.
\end{rem}

\subsection{Constructions with representations}\label{sec:constructions}
\subsubsection{Basic constructions}
There are several basic constructions one can perform with representations, which we will now recall:
\begin{itemize}
\item A $k$-sub-vector space $W$ of a representation $V$ of $H$ is called a subrepresentation of $V$ if it is stable under $H$, i.e., $h(W\otimes_k R)\subset W\otimes_k R$ for every $h\in H(R)$ and any $k$-$G$-algebra $R$. Then $W$ itself is a representation of $H$, and the quotient $V/W$ is naturally a representation of $H$.
\item
If $V$ and $W$ are representations of $H$, then the tensor product $V\otimes_k W$ is a representation of $H$ via
$$(V\otimes_k W)\otimes_k R\simeq (V\otimes_k R)\otimes_R(W\otimes_k R)\xrightarrow{h\otimes h}(V\otimes_k R)\otimes_R(W\otimes_k R)\simeq (V\otimes_k W)\otimes_k R$$ for $h\in H(R)$. \item Similarly, the direct sum $V\oplus W$ is naturally a representation of $H$.
\item
The representation of $H$ consisting of $k$ as a $k$-vector space and the trivial action of $H$ is denoted by $\1$.
\item
If $V$ and $W$ are representations of $H$, then the $k$-vector space $\Hom_k(V,W)$ of $k$-linear maps from $V$ to $W$ is a representation of $H$: For any $k$-$G$-algebra $R$, $h\in H(R)$ and $\varphi\in\Hom_k(V,W)\otimes_k R\cong\Hom_R(V\otimes_k R, W\otimes_k R)$ we define $h(\varphi)\in\Hom_k(V,W)\otimes_k R$ as the unique $R$-linear map such that
\[\xymatrix{
 V\otimes_k R \ar^{\varphi}[r] \ar_{h}[d] & W\otimes_k R \ar^{h}[d] \\
 V\otimes_k R \ar^{h(\varphi)}[r] & W\otimes_k R
}
\]
commutes, i.e., $$h(\varphi)=h\circ \varphi\circ h^{-1}.$$ In particular, if $V$ is a representation of $H$, the dual vector space $V^\vee=\Hom_k(V,k)=\Hom_k(V,\1)$ is a representation of $H$.
\end{itemize}

\subsubsection{Semigroup action}\label{subsec:action}
The above constructions with representations are familiar from the representation theory of algebraic groups. The following construction, however, is unique to difference algebraic groups and, in a certain sense (which will be made precise in Section \ref{sec: Difference Tannakian categories}), is sufficient to characterize categories of representations of difference algebraic groups.
Let $(V,\f)$ be a representation of $H$ and $g\in G$ and let
$$\hs V=V\otimes_k k$$ be the $k$-vector space obtained from $V$ by base extension via $g\colon k\to k$. A similar notation will be adopted for other objects: if $X$ is some object over $k$, then $\hs X$ denotes the object obtained by base extension via $g\colon k\to k$. There is a canonical morphism of group $k$-$G$-schemes $$g\colon \Gl(V)\to\Gl\big(\hs V\big)$$
given by associating to an $R$-linear automorphism $h\colon V\otimes_k R\to V\otimes_k R$ the $R$-linear automorphism
\begin{equation} \label{eqn: RepG is scategory}
g(h)\colon \hs V\otimes_k R\simeq (V\otimes_k R)\otimes_R R\xrightarrow{h\otimes\id_R}(V\otimes_k R)\otimes_R R\simeq {\hs V}\otimes_k R.
\end{equation}
Here, the former and latter isomorphisms are given by
$$
v\otimes a\otimes r \mapsto v\otimes 1\otimes ar\quad\text{and}
\quad
v\otimes r_1\otimes r_2 \mapsto v\otimes 1\otimes g(r_1)r_2,\quad v\in V,\ a\in k,\ r,r_1,r_2 \in R,
$$
respectively,
and the tensor product $(V\otimes_k R)\otimes_R R$ is formed by using $g\colon R\to R$ on the right-hand side.
In terms of matrices, if $\underline{e}=(e_1,\ldots,e_n)$ is a basis of $V$ and $A\in\Gl_n(R)$ represents the action of $h$ on $V\otimes_k R$, i.e, $h(\underline{e})=\underline{e}A$, then, with respect to the basis $\underline{e}\otimes 1$ of ${\hs V}$, the action of $g(h)$ on ${\hs V}\otimes_k R$ is represented by $g(A)\in\Gl_n(R)$.

We can define a new representation $\big(\hs V,g(\f)\big)$ of $H$ as the composition
$$g(\f)\colon H\xrightarrow{\f}\Gl(V)\xrightarrow{g}\Gl\big(\hs V\big).$$
If $f\colon V\to W$ is a morphism of representations of $H$, then also $\hs f\colon \hs V\to\hs W$ is a morphism of representations of $H$. Thus $V\rightsquigarrow{\hs V}$ is a functor from $\rep(H)$ to $\rep(H)$. In terms of comodules, this functor can be described as follows. Let $\rho\colon V\to V\otimes_k k\{H\}$ be the comodule structure corresponding to the represenation $V$ and let $$R_g^*\colon {\hs (k\{H\})}=k\{H\}\otimes_kk \to k\{H\},\quad a\otimes b \mapsto g(a)\cdot b.$$ Then the comodule structure corresponding to the representation $\hs V$ is
$$g(\rho)\colon {\hs}V\xrightarrow{\hs\rho} {\hs}V\otimes_k {\hs(k\{H\})}\xrightarrow{\id\otimes R_g^*}{\hs V}\otimes_k k\{H\}.$$

\section{Tannakian Categories with Semigroup Actions} \label{sec: Difference Tannakian categories}

Let $H$ be a $G$-algebraic group and $H^\sharp$ the group scheme obtained from $H$ by forgetting the difference structure. Then the category of representations of $H$ (as a $G$-algebraic group) is equivalent to the category of representations of $H^\sharp$ (as a group scheme). However, intuitively it is clear that the representation theory of $H$ (as a $G$-algebraic group) is much richer than the representation theory of $H^\sharp$ (as a group scheme). The main point of this section is to identify, in a rather formal manner, an additional ``difference structure'' on the category of representations of $G$ which accounts for this purported richness. One can recover $H$ (as a $G$-algebraic group) from its (Tannakian) category of representations and this additional difference structure.

The main result in this section (Theorem \ref{theo: sTannakianmain}) is a purely categorical characterization of those categories that are categories of representations of group $G$-schemes. This is an analogue of the Tannaka duality theorem for group schemes. In the general context of fields with operators, a Tannaka duality theorem was proven in \cite{Kamensky:TannakianFormalismOverFieldsWithOperators}. However, in the situation that we are considering here (the case of a semigroup action), it is possible to give a very simple definition of difference Tannakian categories and a rather direct proof of the corresponding Tannaka duality theorem. We have, therefore, chosen to include an independent self--contained proof of the Tannaka duality theorem for difference group schemes.

The use of Theorem~\ref{theo: sTannakianmain} in practice warrants an effective description of actions of a particular class of groups on categories. Lemma~\ref{lem:free} and Theorem~\ref{thm:main0} in Section~\ref{sec:semigroupaction} provide such a description for free products of free finitely generated abelian semigroups, which is the most popular class of semigroups that appears in the applications.

\subsection{Review of Tannakian categories}\label{sec:revtann}

We start by recalling the usual Tannakian formalism. Basic references for Tannakian categories are \cite{Saavedra:CategoriesTannakien,DeligneMilne:TannakianCategories,Deligne:categoriestannakien}. We mostly follow \cite{DeligneMilne:TannakianCategories} in the nomenclature:
\begin{itemize}
\item
A \emph{tensor category} is a category $\C$ together with
\begin{itemize}
\item
 a functor $\C\times\C\to\C$, $(X,Y)\rightsquigarrow X\otimes Y$ and 
 \item compatible associativity and commutativity constraints
$$X\otimes(Y\otimes Z)\simeq (X\otimes Y)\otimes Z, \quad X\otimes Y\simeq Y\otimes X$$
\end{itemize}
such that there exists an \emph{identity object} $(\1,e)$. The identity object is unique up to unique isomorphisms and induces a functorial isomorphism $X\simeq\1\otimes X$.
\item
If $\C$ is abelian and $\otimes$ is bi-additive, we speak of an \emph{abelian tensor category}.
In this case $R:=\End(\1)$ is a (commutative) ring, $\C$ is $R$-linear (via $X\simeq\1\otimes X$) and $\otimes$ is $R$-bilinear.
\item
Let $R$ be a ring. An \emph{abelian tensor category over $R$} is an abelian tensor category together with an isomorphism of rings $R\simeq\End(\1)$.
\item
Let $\C$ and $\D$ be tensor categories. A \emph{tensor functor} $\C\to \D$ is a pair $(F,\alpha)$ comprising a functor $F\colon \C\to \D$ and a functorial isomorphism
$\alpha_{X,Y}\colon F(X)\otimes F(Y)\simeq F(X\otimes Y)$ such that some natural properties are satisfied. If $\C$ and $\D$ are abelian, $F$ is required to be additive. We will often omit $\alpha$ from the notation and speak of $F$ as a tensor functor. A morphism of tensor functors is a morphism of functors also satisfying some natural properties.
\item
A tensor category is called \emph{rigid} if every object $X$ has a dual $X^\vee$ (cf. \cite[Definition~1.7]{DeligneMilne:TannakianCategories} and \cite[2.1.2]{Deligne:categoriestannakien}.)
\item
Let $k$ be a field. A \emph{neutral Tannakian category} over $k$ is a rigid abelian tensor category $\C$ over $k$, such that there exists an exact faithful $k$-linear tensor functor $\omega\colon\C\to\vect_k$. Any such functor is said to be a \emph{fibre functor} for $\C$.
\item
For every $k$-algebra $R$, composing $\omega$ with the canonical tensor functor $\vect_k\to\mod_R$, $V\rightsquigarrow V\otimes_k R$ yields a tensor functor $\omega\otimes R\colon\C\to\mod_R$. We can define a functor $\underline{\operatorname{Aut}}^\otimes(\omega)\colon \alg_k\to\groups$ by associating to every $k$-algebra $R$ the group of automorphisms of $\omega\otimes R$ (as tensor functor).
\end{itemize}

The main result about Tannakian categories is the following:
\begin{theo}[{\cite[Theorem 2.11]{DeligneMilne:TannakianCategories}}] \label{theo: Tannakianmain}
Let $\C$ be a neutral Tannakian category over $k$ and $\omega\colon\C\to\vect_k$ a fibre functor. Then $H=\underline{\operatorname{Aut}}^\otimes(\omega)$ is an affine group scheme over $k$ and $\omega$ induces an equivalence of tensor categories between $\C$ and the category of finite dimensional representations of $H$.
\end{theo}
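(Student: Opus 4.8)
The plan is to run the standard proof of the Tannaka duality theorem for neutral Tannakian categories (the full argument is in \cite{DeligneMilne:TannakianCategories} or \cite{Saavedra:CategoriesTannakien}), in three stages: extract the formal consequences of the axioms; treat the subcategory generated by a single object, where $H$ becomes an honest algebraic group; then pass to a projective limit.

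First I would record what rigidity, exactness and faithfulness buy us, independently of $H$. Rigidity gives internal Homs $\underline{\Hom}(X,Y)\cong X^\vee\otimes Y$ with $\Hom_\C(X,Y)\cong\Hom_\C(\1,\underline{\Hom}(X,Y))$; a $k$-linear tensor functor preserves these, so $\omega(\underline{\Hom}(X,Y))\cong\Hom_k(\omega X,\omega Y)$; and an exact faithful $k$-linear functor reflects isomorphisms, monomorphisms, epimorphisms and short exact sequences. At this purely formal level one also shows that the functor sending a $k$-algebra $R$ to the monoid of tensor endomorphisms of $\omega\otimes R$ is representable by an affine scheme $\spec A$ over $k$, where $A$ may be described as the coend $\int^{X}\omega(X)^\vee\otimes_k\omega(X)$ and is turned into a commutative Hopf algebra by the tensor product and duality of $\C$; rigidity forces every tensor endomorphism to be invertible, so $H:=\underline{\Aut}^\otimes(\omega)=\spec A$ is an affine group scheme over $k$ and $\omega$ factors as $\C\to\rep_k(H)\to\vect_k$.

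Next I would treat, for each object $X$, the strictly full abelian tensor subcategory $\langle X\rangle\subseteq\C$ generated by $X$ and closed under duals and subquotients. Running the previous construction inside $\langle X\rangle$ produces a \emph{finitely generated} Hopf algebra $B_X$ --- the algebra of matrix coefficients of $\omega(X)$ and its tensor powers --- so that $H_X:=\spec B_X$ is a closed algebraic subgroup of $\GL(\omega X)$ acting faithfully on $\omega(X)$. The induced functor $\langle X\rangle\to\rep_k(H_X)$ is then fully faithful: combining the formal step with the identity $\Hom_\C(\1,Z)\cong\omega(Z)^{H_X}$ (valid because $\omega(\1)=k$ carries the trivial action) gives $\Hom_\C(X,Y)\cong\Hom_k(\omega X,\omega Y)^{H_X}=\Hom_{H_X}(\omega X,\omega Y)$. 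It is essentially surjective because $\omega(X)$ is a faithful $H_X$-representation, so by Chevalley's theorem every finite-dimensional representation of $H_X$ is a subquotient of a finite direct sum of tensor powers of $\omega(X)$ and $\omega(X)^\vee$, all of which lie in $\langle X\rangle$; here one uses once more that faithful exactness of $\omega$ matches the subobjects of an object $Z\in\langle X\rangle$ with the $H_X$-stable subspaces of $\omega(Z)$.

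Finally I would assemble the global statement. Writing $\C=\varinjlim_X\langle X\rangle$ as the filtered union of its finitely generated tensor subcategories, each inclusion $\langle X\rangle\subseteq\langle Y\rangle$ gives a surjection $H_Y\twoheadrightarrow H_X$, whence $H=\varprojlim_X H_X$ with Hopf algebra $\varinjlim_X B_X$; the equivalences $\langle X\rangle\simeq\rep_k(H_X)$ are compatible with the transition maps, so they pass to the colimit to give the tensor equivalence $\C\simeq\rep_k(H)$ induced by $\omega$. The main obstacle is the essential surjectivity in the single-object step --- equivalently, that the matrix-coefficient Hopf algebra $B_X$ is large enough that $H_X$ has no representations beyond those already visible in $\langle X\rangle$ --- which rests on Chevalley's subquotient theorem together with the careful verification that faithful exactness of $\omega$ makes the subobject lattice of each object coincide with the lattice of $H_X$-stable subspaces of its image; everything else is bookkeeping with the rigid abelian tensor structure.
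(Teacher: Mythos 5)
This theorem is quoted in the paper directly from Deligne--Milne (their Theorem 2.11) with no proof given, so there is no internal argument to compare against. Your sketch is a faithful outline of the standard proof from that reference --- representability of the tensor-endomorphisms functor by the coend Hopf algebra, invertibility forced by rigidity, the single-object case via matrix coefficients together with Chevalley's subquotient theorem, and passage to the filtered limit --- and the step you yourself single out as the main obstacle (that exactness and faithfulness of $\omega$ identify subobjects of $Z$ with $H_X$-stable subspaces of $\omega(Z)$, whence the surjectivity of $\Hom_\C(\1,Z)\to\omega(Z)^{H_X}$) is indeed where the real work in the cited proof lies.
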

For later use, we record a corollary.
\begin{cor} \label{cor: morphism of groups corresponds to functor}
Let $k$ be a field and $\C$, $\C'$ neutral Tannakian categories over $k$ with fibre functors $\omega$ and $\omega'$, respectively. There is a canonical bijection between the set of morphisms of group $k$-schemes from $H=\underline{\operatorname{Aut}}^\otimes(\omega)$ to $H'=\underline{\operatorname{Aut}}^\otimes(\omega')$ and the set of equivalence classes of pairs $(F,\alpha)$, where $F\colon\C'\to\C$ is a tensor functor and
$\alpha\colon\omega F\to \omega'$ an isomorphism of tensor functors. Another such pair $(F_1,\alpha_1)$ is equivalent to $(F,\alpha)$ if there exists an isomorphism of tensor functors $F\to F_1$ such that
\[\xymatrix{
 \omega F \ar[rr] \ar_{\alpha}[rd] & & \omega F_1 \ar^{\alpha_1}[ld] \\
  & \omega' &
}
\]
commutes.
\end{cor}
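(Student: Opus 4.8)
The plan is to deduce the corollary from Theorem~\ref{theo: Tannakianmain} by careful bookkeeping with the relevant isomorphisms. By Theorem~\ref{theo: Tannakianmain}, $\omega$ and $\omega'$ identify $\C$ with $\rep(H)$ and $\C'$ with $\rep(H')$ in a way that carries $\omega$ and $\omega'$ to the forgetful (``underlying vector space'') functors; so I may and do assume $\C=\rep(H)$, $\C'=\rep(H')$, and that $\omega$, $\omega'$ are the forgetful functors.

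In one direction, to a morphism $\varphi\colon H\to H'$ of group $k$-schemes I associate the pair $(\varphi^*,\id)$, where $\varphi^*\colon\rep(H')\to\rep(H)$ is restriction of scalars along $\varphi$, sending a representation $(V',\rho')$ of $H'$ to $(V',\rho'\circ\varphi)$; this is a tensor functor whose effect on underlying vector spaces is the identity, so $\id\colon\omega\varphi^*\to\omega'$ is a legitimate choice of $\alpha$. In the other direction, to a pair $(F,\alpha)$ I associate $\varphi\colon H\to H'$ as follows: for a $k$-algebra $R$ and $g\in H(R)=\Aut^\otimes(\omega\otimes R)$, composing $g$ with $F$ gives an automorphism of the tensor functor $(\omega\otimes R)\circ F\cong(\omega F)\otimes R$, and conjugating it by the base change $\alpha\otimes R$ of $\alpha$ produces an element $\varphi_R(g)\in\Aut^\otimes(\omega'\otimes R)=H'(R)$; this is a group homomorphism, natural in $R$, hence a morphism of group $k$-schemes. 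I then check that $\varphi$ depends only on the equivalence class of $(F,\alpha)$: if $\beta\colon F\to F_1$ is an isomorphism of tensor functors with $\alpha_1\circ\omega(\beta)=\alpha$, then naturality of $g$ shows that composing $g$ with $F_1$ is the $(\omega(\beta)\otimes R)$-conjugate of composing $g$ with $F$, and this conjugation is absorbed when passing from $\alpha$ to $\alpha_1$.

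It remains to see these two assignments are mutually inverse. Starting from $\varphi$ and forming $(\varphi^*,\id)$, the element $g\in H(R)$ acts at the object $\varphi^*(V')\in\rep(H)$ on $\omega\varphi^*(V')\otimes R=V'\otimes R$ precisely through $\varphi_R(g)$ followed by the $H'$-action on $V'$ (by definition of $\varphi^*$ and Theorem~\ref{theo: Tannakianmain}); since here $\alpha=\id$, the recipe returns $\varphi$. Starting from $(F,\alpha)$ and forming $\varphi$, unwinding the definition of $\varphi_R(g)$ shows that for each object $X'$ of $\C'$ the linear isomorphism $\alpha_{X'}\colon\omega F(X')\to\omega'(X')$ intertwines the $H$-action on $F(X')$ (coming from $F(X')\in\rep(H)$) with the $H$-action on $\varphi^*(X')$ (the $H'$-action on $X'$ composed with $\varphi$); hence $\alpha$ is an isomorphism of tensor functors $F\xrightarrow{\ \sim\ }\varphi^*$, and it exhibits $(F,\alpha)\sim(\varphi^*,\id)$ because $\id\circ\,\omega(\alpha)=\alpha$. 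Together with well-definedness on equivalence classes this gives the claimed bijection, and one sees it is ``canonical'' since every step was functorial.

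The only non-formal ingredient is Theorem~\ref{theo: Tannakianmain}; once it is invoked, the argument is mechanical. I expect the delicate point to be the coherence behind the identification $(\omega\otimes R)\circ F\cong(\omega F)\otimes R$ of tensor functors and the compatibility of ``composing $g$ with $F$'' with base change along $k$-algebras $R$ — checking that all these isomorphisms are natural in the object and in $R$ and respect the associativity and commutativity constraints. This bookkeeping needs care, but poses no conceptual obstacle.
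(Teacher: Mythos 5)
Your proof is correct. The paper disposes of this corollary by citing Theorem~\ref{theo: Tannakianmain} together with \cite[Corollary~2.9]{DeligneMilne:TannakianCategories}, and what you have written is essentially the standard proof of that cited corollary — reduce to $\C=\rep(H)$, $\C'=\rep(H')$ with forgetful fibre functors via Tannaka duality, then construct the two mutually inverse assignments ($\varphi\mapsto(\varphi^*,\id)$ and $(F,\alpha)\mapsto$ conjugation of $g\circ F$ by $\alpha\otimes R$) explicitly — so the mathematical content is the same.
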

\begin{proof} This follows from Theorem~\ref{theo: Tannakianmain} and \cite[Corollary~2.9]{DeligneMilne:TannakianCategories}
\end{proof}

\subsection{Actions of semigroups on categories}\label{sec:semigroupaction}
Let $G$ be a semigroup. The main result of this section is Theorem~\ref{thm:main0}, which provides a finite set of diagrams defining actions of free finitely generated abelian semigroups on categories. By Lemma~\ref{lem:free}, this also implies that actions of finite free products of such groups on categories can be described using finite sets of diagrams. We conclude this section with Example~\ref{ex:commuteandnot}, in which we show that a commutative group can act on a differential equation in a non-commutative way, which is, in fact, commutative up to a gauge transformation. This justifies that the right notion of action in this context is on categories of differential modules rather than just on differential equations.

\subsubsection{Definition of action}
We will start with the main definition, which contains infinite data if and only if $G$ is infinite.

\begin{defi}[{see also \cite[Section~0]{Deligne1997}, \cite[Sections~1.3.3 and~1.3.4]{Frenkel}, and \cite[Section~4.1]{DGNO}}]\label{def:actionmain}
A \emph{$G$-category} is a category $\C$ together with a set of functors $$T(g) \colon \C\to\C,\quad g\in G,$$ and isomorphisms of functors
$$
c_{f,g} : T(f)\circ T(g) \to T(fg),\quad f,g \in G,\quad \quad \iota : T(e) \stackrel{\sim}{\longrightarrow} \id_\C,
$$
such that the following diagram is commutative:
\begin{equation}\label{diag:assoc}
\begin{CD}
T(f)\circ T(g)\circ T(h)@>c_{f,g}\circ\id>> T(fg)\circ T(h)\\
@V\id\circ c_{g,h}VV@V{c_{fg,h}}VV\\
T(f)\circ T(gh)@>c_{f,gh}>> T(fgh).
\end{CD}
\end{equation}
\end{defi}
$G$-actions on $\C$ form a category, denoted by $\C_G$, 
\begin{enumerate}
\item an object is a set of functors and isomorphisms $$\big(\{T(g)\:|\: g \in G\}, \big\{c_{f,g}\:|\: f,g\in G\big\}\big)$$ as above;
\item a morphism between two objects, $T$ and $T'$, is a set of morphisms of functors $$\big\{m_f : T(f)\to T'(f)\:|\: f \in G\big\}$$ such that the following diagram is commutative:
\begin{align}\label{diag:comm}
\begin{CD}
T(f)\circ T(g)@>c_{f,g}>> T(fg)\\
@Vm_f\circ m_gVV@Vm_{fg}VV\\
T'(f)\circ T'(g)@>c'_{f,g}>> T'(fg).
\end{CD}
\end{align}
\end{enumerate}

\begin{lemma}\label{rem:34}
Let $F : \C\to \C$ be a functor, $g \in G$, and $I : F \to T(g)$, an isomorphism of functors. Then $$\big(T,\big\{c_{f,g}\big\},\iota\big)\cong\left(T',\big\{c'_{f,g}\big\},\iota'\right),$$
where $$T'(h) := T(h),\  h \in G\setminus\{g\},\ \ T'(g) := F,\ \ c'_{f,h} = c_{f,h}, \  f,h\in G\setminus\{g\},\ \ c'_{g,h} :=c_{g,h}(I\circ\id),\ \ c'_{h,g}:= c_{h,g}(\id\circ I),\  h \in G,$$
and $\iota' = \iota$ if $g \ne e$ and $\iota':=\iota I$ if $g = e$.
 In particular, $$\big(T,\big\{c_{f,h}\big\},\iota\big) \cong \left(T',\big\{c'_{f,h}\big\},\id\right),$$ where, $$T'(h) := T(h),\  h \in G\setminus\{e\},\ \ T'(e) := \id_\C,\ \ c'_{f,h} = c_{f,h}, \  f,h\in G\setminus\{e\},\ \ c'_{e,h} :=c_{e,h}\big(\iota^{-1}\circ\id\big),\ \ c'_{h,e}:= c_{h,e}\big(\id\circ\iota^{-1}\big),\  h \in G.$$
 In other words, for a given element of the semigroup, one obtains an isomorphic action of the semigroup by replacing the action of this element by a functor that is isomorphic to it.
\end{lemma}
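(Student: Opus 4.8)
The plan is to prove both statements at once by \emph{transport of structure}: I will exhibit an explicit family of isomorphisms of functors relating $T$ and $T'$ and check that it is an isomorphism in $\C_G$. Write $g$ for the distinguished element and set $m_g:=I^{-1}\colon T(g)\to F=T'(g)$ and $m_h:=\id_{T(h)}$ for $h\in G\setminus\{g\}$; each $m_h$ is an isomorphism of functors. The key point is that, once the functors $T'(h)$ and the isomorphisms $m_h\colon T(h)\xrightarrow{\sim}T'(h)$ are fixed, requiring $\{m_h\}$ to be a morphism in $\C_G$ \emph{forces}
\[
c'_{f,h}\colon\ T'(f)\circ T'(h)\ \xrightarrow{\,m_f^{-1}\circ m_h^{-1}\,}\ T(f)\circ T(h)\ \xrightarrow{\,c_{f,h}\,}\ T(fh)\ \xrightarrow{\,m_{fh}\,}\ T'(fh),
\]
and one takes $\iota':=\iota\circ m_e^{-1}$. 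Substituting the explicit $m_h$ one reads off exactly the formulas in the statement: $c'_{f,h}=c_{f,h}$ when none of $f$, $h$, $fh$ equals $g$; $c'_{g,h}=c_{g,h}(I\circ\id)$ resp.\ $c'_{h,g}=c_{h,g}(\id\circ I)$ when exactly one slot equals $g$ and $fh\neq g$; and $\iota'=\iota$ or $\iota'=\iota\circ I$ according as $g\neq e$ or $g=e$. (In the remaining degenerate cases---$fh=g$ with $f,h\neq g$, or $f=h=g$---the factor $m_{fh}$, resp.\ the need to whisker $I$ into both slots, supplies the adjustment that the displayed formulas of the statement tacitly omit.)

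With $m$, $c'$ and $\iota'$ in hand, three verifications remain, all of them formal. First, $(T',\{c'_{f,h}\},\iota')$ is again an object of $\C_G$: the only thing to check is the associativity diagram~\eqref{diag:assoc} for $c'$, and upon expanding each $c'$ by its definition the factors $m$ and $m^{-1}$ cancel in pairs by the interchange law, reducing it to~\eqref{diag:assoc} for $c$, which holds by hypothesis. Second, $\{m_h\}$ is a morphism $(T,\{c_{f,h}\},\iota)\to(T',\{c'_{f,h}\},\iota')$ in $\C_G$: the square~\eqref{diag:comm} for $\{m_h\}$ is, by construction, exactly the definition of $c'_{f,h}$. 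Third, $\{m_h\}$ is an \emph{isomorphism} in $\C_G$: each $m_h$ is invertible as a morphism of functors, and inverting the square~\eqref{diag:comm} shows $\{m_h^{-1}\}$ is again a $\C_G$-morphism, so the two are mutually inverse. This yields $(T,\{c_{f,h}\},\iota)\cong(T',\{c'_{f,h}\},\iota')$.

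The ``in particular'' assertion is then just the special case $g:=e$, $F:=\id_\C$, $I:=\iota^{-1}\colon\id_\C\to T(e)$, for which $m_e=\iota$, $T'(e)=\id_\C$, and the general formulas specialize to $c'_{e,h}=c_{e,h}(\iota^{-1}\circ\id)$, $c'_{h,e}=c_{h,e}(\id\circ\iota^{-1})$ and $\iota'=\iota\circ\iota^{-1}=\id$. I expect no genuine obstacle; the one step that needs care is the cancellation argument for~\eqref{diag:assoc} in the first verification, where one must keep horizontal and vertical composition of natural transformations strictly apart---in particular the whiskered morphisms $c'_{f,g}\circ\id$ and $\id\circ c'_{g,h}$ appearing in~\eqref{diag:assoc}---and use the interchange law to slide the $m_h$'s past the $c_{f,h}$'s.
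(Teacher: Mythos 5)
Your proof is correct and is essentially the paper's own argument: the paper likewise sets $m_g=I^{-1}$ and $m_h=\id$ for $h\neq g$ and asserts that \eqref{diag:assoc} and \eqref{diag:comm} then commute by construction. Your write-up is a more detailed version of the same transport-of-structure idea, and your parenthetical about the degenerate cases ($fh=g$ with $f,h\neq g$, or $f=h=g$) correctly identifies adjustments that the statement's displayed formulas tacitly omit.
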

\begin{proof}Let $m_h = \id : T(h) \to T'(h)$, $h \in G \setminus \{g\}$ and $m_g = I^{-1} : T(g) \to F$. Then~\eqref{diag:assoc} and~\eqref{diag:comm} are commutative by the construction. 
\end{proof}

Now, a natural question is: for what classes of semigroups, their actions on categories can be defined using only finitely many data. For instance, can one find a {\it restriction functor} $R$ from $\C_G$ to the category of actions of a particular finite subset of $G$ (or some other finite subset of some other semigroup associated to $G$, as done in \cite[Th\'eor\`eme~1.5]{Deligne1997}) so that $R$ is an equivalence of categories? In the following two sections, we will show that this is the case for finite free products of semigroups of the form  $\N^n\times \Zb/{n_1}\Zb\times\ldots\times\Zb/{n_r}\Zb$.
\subsubsection{Actions of free products of semigroups}
In this section, we will show how to describe actions of free products of semigroups in terms of actions of each of the semigroups.

For every pair of semigroups $G_1$ and $G_2$, we have the category $\C_{G_1}\times\C_{G_2}$  \cite[\S II.3]{Maclane}.
We will define the restriction functor $$R : \C_{G_1*G_2} \to \C_{G_1}\times\C_{G_2}$$
as follows: 
\begin{enumerate}
\item for an object $$
T = \left(\left\{T(g):\C\to\C\:\big|\: g\in G_1*G_2\right\}, \left\{c_{f,g}\:\big|\: f, g \in G_1*G_2\right\}\right),$$
we let
$$
R(T) := \left(\left(\left\{T(g):\C\to\C\:\big|\: g\in G_1\right\}, \left\{c_{f,g}\:\big|\: f, g \in G_1\right\}\right),\left(\left\{T(g):\C\to\C\:\big|\: g\in G_2\right\}, \left\{c_{f,g}\:\big|\: f, g \in G_2\right\}\right)\right);
$$
\item for objects $T_1$ and $T_2$ and a morphism 
$$m = \left\{m_f :T_1(f)\to T_2(f), f \in G_1*G_2\right\},$$
we let
$$
R(m) := \left(\left\{m_f :T_1(f)\to T_2(f), f \in G_1\right\},\left\{m_f :T_1(f)\to T_2(f), f \in G_2\right\}\right).
$$
\end{enumerate}
\begin{lemma}\label{lem:free} For all semigroups $G_1$ and $G_2$, the restriction functor $R : \C_{G_1*G_2} \to \C_{G_1}\times\C_{G_2}$ is an equivalence of categories.
\end{lemma}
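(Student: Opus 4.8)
The plan is to construct an explicit quasi-inverse functor $E \colon \C_{G_1}\times\C_{G_2}\to\C_{G_1*G_2}$ and check that $R\circ E$ and $E\circ R$ are isomorphic to the respective identity functors. The key point is the universal property of the free product of semigroups: every element $g\in G_1*G_2$ with $g\ne e$ has a unique reduced expression $g = s_1 s_2\cdots s_k$ with the $s_i$ alternately lying in $G_1\setminus\{e\}$ and $G_2\setminus\{e\}$. So, given an object $\big((T_1,\{c^{1}_{f,g}\}),(T_2,\{c^{2}_{f,g}\})\big)$ of $\C_{G_1}\times\C_{G_2}$, I would \emph{define} $T(g) := T_{i_1}(s_1)\circ\cdots\circ T_{i_k}(s_k)$ on this reduced word (and $T(e):=\id_\C$, using Lemma~\ref{rem:34} to arrange that both $T_1(e)$ and $T_2(e)$ are literally $\id_\C$, so there is no ambiguity). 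The constraint $c_{f,g}\colon T(f)\circ T(g)\to T(fg)$ is then built by concatenating the reduced words for $f$ and $g$ and, at the single place where the last letter of $f$ and the first letter of $g$ may belong to the same $G_i$, applying the appropriate $c^{i}$; iterating, one may have to cancel several adjacent syllables (if, after merging, a product in some $G_i$ equals $e$), and each such step uses a $c^i$ together with the normalization $T_i(e)=\id$. One then verifies the pentagon-type coherence~\eqref{diag:assoc} for these $c_{f,g}$, and on morphisms one sets $E(m)_g := m^{i_1}_{s_1}\circ\cdots\circ m^{i_k}_{s_k}$ and checks~\eqref{diag:comm}.

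\textbf{Order of steps.} First, using Lemma~\ref{rem:34}, reduce to the situation where the given actions of $G_1$ and $G_2$ both send $e$ to $\id_\C$; this is harmless since $R$ and the claimed equivalence are insensitive to such replacements. Second, define $T(g)$ on reduced words and define $c_{f,g}$ by the merging/cancellation procedure above; here one must check that the definition does not depend on the order in which cancellations are performed — this follows from the associativity diagram~\eqref{diag:assoc} already assumed to hold for $c^1$ and $c^2$ within each $G_i$. Third, verify~\eqref{diag:assoc} for the constructed $T$: one writes the reduced words for $f,g,h$, and both paths of the square amount to the same sequence of $c^i$-applications, grouped differently, so commutativity reduces to finitely many instances of~\eqref{diag:assoc} for $G_1$ and $G_2$ separately together with functoriality of $\circ$. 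Fourth, check $E$ is functorial and that $E(m)$ satisfies~\eqref{diag:comm}, again by unwinding reduced words. Finally, exhibit the natural isomorphisms $R\circ E\cong\id$ (immediate: on $g\in G_i$ the reduced word is just $g$ itself, so $T(g)$ and $c_{f,g}$ restrict to the original data) and $E\circ R\cong\id$ (given a $G_1*G_2$-action $T$, repeated use of the constraints $c_{f,g}$ gives a canonical isomorphism $T(s_1)\circ\cdots\circ T(s_k)\cong T(s_1\cdots s_k)$, and~\eqref{diag:assoc} guarantees this isomorphism is well-defined and natural in the object of $\C_G$, i.e., compatible with the morphisms $m_f$).

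\textbf{Main obstacle.} The routine but genuinely fiddly part is the bookkeeping in the cancellation procedure when merging reduced words: if the tail of $f$'s word and the head of $g$'s word, after one multiplication in some $G_i$, produce $e$, then two further syllables (now adjacent, in the \emph{other} semigroup) must be merged, and this can cascade. Making the definition of $c_{f,g}$ manifestly independent of the bracketing of these cascading cancellations — and then proving~\eqref{diag:assoc} in the presence of such cancellations on \emph{all three} of $f$, $g$, $h$ and their products — is where essentially all the work lies. The clean way to organize this is to prove a single lemma: for any finite sequence $g_1,\dots,g_m$ of elements of $G_1\cup G_2$, the various ways of inserting constraints $c^i$ to build an isomorphism $T(g_1)\circ\cdots\circ T(g_m)\to T(g_1\cdots g_m)$ all coincide; this is a standard ``coherence from the pentagon'' argument (an induction on $m$ using~\eqref{diag:assoc}), and once it is in place every verification above becomes formal. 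I would state and prove that coherence lemma first and then deduce the equivalence from it.
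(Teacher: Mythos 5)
Your proposal is correct and follows essentially the same route as the paper: an explicit quasi-inverse $E$ built on shortest-length (reduced) alternating words, with the constraints $c_{f,g}$ obtained by splicing in the component constraints at the junction, associativity checked by reduction to~\eqref{diag:assoc} within each $G_i$, and $R\circ E=\id$, $E\circ R\cong\id$ via composites of the $c$'s. If anything, your explicit treatment of the cascading-cancellation bookkeeping (via a preliminary coherence lemma) is more careful than the paper's, which handles only a single merge at the junction and leaves the cascade implicit.
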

\begin{proof} We will show this by constructing a quasi-inverse functor $E$ to $R$. For every object $$\left(\left(\left\{T(g):\C\to\C\:\big|\: g\in G_1\right\}, \left\{c_{f,g}\:\big|\: f, g \in G_1\right\}\right),\left(\left\{T(g):\C\to\C\:\big|\: g\in G_2\right\}, \left\{c_{f,g}\:\big|\: f, g \in G_2\right\}\right)\right),$$ define
$$
E(T) = \left(\left\{T(u_1)\circ T(v_1)\circ\ldots\circ T(u_q)\circ T(v_q)\:\big|\: u_i \in G_1, v_i\in G_2, 1\le i\le q, q \ge 1\right\},\left\{c_{f,g}\:\big|\: f,g \in G_1*G_2\right\}\right),
$$
where, for all presentations of the shortest length $$f=u_1v_1\cdot\ldots\cdot u_rv_r,\quad g = u_1'v_1'\cdot\ldots\cdot u_s'v_s',\quad u_i,u_j' \in G_1,\ v_i,v_j' \in G_2,\ 1\le i\le r,\ 1\le j\le s,$$ define
\begin{equation}\label{eq:case1}
c_{f,g} = \id : T(f)\circ T(g) \to T(fg)
\end{equation}
if $v_r,u_1' \ne e$ or $v_ru_1' = e$. Otherwise, if $v_r = e$, define
\begin{equation}\label{eq:case2}
c_{f,g} = \id_{T(u_1)\circ T(v_1)\circ\ldots\circ T(u_{r-1})\circ T(v_{r-1})}\circ c_{u_r,u_1'}\circ\id_{T(v_1')\circ\ldots\circ T(u_s')\circ T(v_s')}.
\end{equation}
The case $u_1' = e$ is similar. The required associativity  for $c_{.,.}$follows from~\eqref{eq:case1} and~\eqref{eq:case2} and the associativity for $c_{.,.}$ in each of $G_1$ and $G_2$.

Let now $m \in \MorC_{\C_{G_1}\times\C_{G_2}}(T_1,T_2)$ and $f = u_1v_1\cdot\ldots\cdot u_rv_r$ with $u_i \in G_1$, $v_i\in G_2$, $1\le i\le r$, being a presentation of the shortest length. Define
$$
m_f := m_{u_1}\circ m_{v_1}\circ\ldots\circ m_{u_r}\circ m_{v_r}, 
$$
which satisfies~\eqref{diag:comm} by construction.
By construction as well, $$R\circ E = \id_{\C_{G_1}\times\C_{G_2}}.$$
We will show that $$E\circ R \cong \id_{\C_{G_1*G_2}}.$$
Indeed, let $T_1,T_2 \in \Ob\big(\C_{G_1*G_2}\big)$ and $m \in \MorC_{\C_{G_1*G_2}}(T_1,T_2)$. Then the diagram
$$
\begin{CD}
(E\circ R)(T_1)@>I_{T_1}>> T_1\\
@V(E\circ R)(m)VV@VmVV\\
(E\circ R)(T_2)@>I_{T_2}>> T_2
\end{CD}
$$
is commutative, where, for each $T \in \Ob\big(\C_{G_1*G_2}\big)$, the set of isomorphisms of functors $$I_T : (E\circ R)(T)\to T$$ is defined by, for each $u_1v_1\cdot\ldots\cdot u_rv_r \in G_1*G_2$, successively composing isomorphisms of functors of the form $$c_{u_1v_1\cdot\ldots\cdot u_iv_i,u_{i+1}v_{i+1}\cdot\ldots\cdot u_rv_r}\quad\text{and}\quad c_{u_1v_1\cdot\ldots\cdot u_i,v_iu_{i+1}v_{i+1}\cdot\ldots\cdot u_rv_r},$$
which finishes the proof.
\end{proof}

\subsubsection{Actions of finitely generated abelian semigroups}
In this section, we will discuss actions of finitely generated abelian semigroups of a special form: \begin{equation}\label{eq:G}
G =  \N^n\times \Zb/{n_1}\Zb\times\ldots\times\Zb/{n_r}\Zb,\quad n_j \geq 1,\ 1\leq j\leq r,
\end{equation} (for simplicity, let $m = n+r$) with a selected set  $A =\{a_1,\ldots,a_m\}$ of generators that correspond to the decomposition~\eqref{eq:G}. Let the category $\C_A$ consist of
\begin{enumerate}
\item objects of the form $$\hspace{-0.25in}\left(\left\{T(a)\:|\: a \in A\right\}, \left\{i_{a_i,a_j} : T(a_i)\circ T(a_j) \stackrel{\sim}{\longrightarrow} T(a_j)\circ T(a_i)\:\big|\:  1\le j< i \le m\right\}, \left\{I_j : T(a_{n+j})^{\circ n_j}\stackrel{\sim}{\longrightarrow} \id\:|\: 1\leq j\leq r \right\}\right)$$
such that, for all $i_1, i_2,i_3$, $1 \le i_1 < i_2 < i_3 \le m$, the following diagram  is commutative (the hexagon axiom):
\begin{equation}\label{eq:i1i2i3}
\xy
(0,0)*+{T(a_{i_3})\circ T(a_{i_2})\circ T(a_{i_1})}="a", (20,18)*+{T(a_{i_2})\circ T(a_{i_3}) \circ T(a_{i_1})}="b",
(70,18)*+{T(a_{i_2})\circ T(a_{i_1}) \circ T(a_{i_3})}="c", (90,0)*+{T(a_{i_1})\circ T(a_{i_2}) \circ T(a_{i_3})}="d",(70,-18)*+{T(a_{i_1})\circ T(a_{i_3}) \circ T(a_{i_2})}="e", (20,-18)*+{T(a_{i_3})\circ T(a_{i_1})\circ T(a_{i_2})}="f"
\ar^{i_{a_{i_3},a_{i_2}}\circ \id} @{->} "a";"b"
\ar^{\id\circ i_{a_{i_3},a_{i_1}}} @{->} "b";"c"
\ar^{i_{a_{i_2},a_{i_1}}\circ \id} @{->} "c";"d"
\ar_{ \id\circ i_{a_{i_2},a_{i_1}}} @{->} "a";"f"
\ar^{^{i_{a_{i_3},a_{i_1}}\circ \id}} @{->} "f";"e"
\ar_{\id\circ i_{a_{i_3},a_{i_2}}} @{->} "e";"d"
\endxy
\end{equation}
as well as, for all $j$, $1\le j \le r$,
\begin{equation}\label{eq:finordercond}
\begin{CD}
T(a_{n+j})\circ T(a_{n+j})^{\circ n_j-1}\circ T(a_{n+j}) @>>> T(a_{n+j})^{\circ n_j}\circ T(a_{n+j})\\
@VVV@VV I_j\circ\id V\\
 T(a_{n+j})\circ T(a_{n+j})^{\circ n_j}@>\id\circ I_j>> T(a_{n+j}).
\end{CD}
\end{equation}
\item morphisms between two objects $T$ and $T'$ consist of morphisms of functors 
$$\big\{m_a : T(a)\to T'(a) \:\big|\: a \in A\big\}$$ such that, for all $i,j$, $i < j$, $1\le i,j\le m$,
the following diagram is commutative:
\begin{equation}\label{diag:comm2}
\begin{CD}
T(a_i)\circ T(a_j) @>i_{a_i,a_j}>>T(a_j)\circ T(a_i)\\
@Vm_{a_i}\circ m_{a_j}VV@Vm_{a_j}\circ m_{a_i}VV\\
T'(a_i)\circ T'(a_j) @>i'_{a_i,a_j}>>T'(a_j)\circ T'(a_i),
\end{CD}
\end{equation}
and, for all $j$,  $1\le j \le r$, the following diagram is commutative:
\begin{equation}\label{diag:commfin}
\begin{CD}
T(a_{n+j})^{\circ n_j} @>I_j>>\id_\C\\
@Vm_{a_{n+j}}^{\circ n_j}VV@|\\
T'(a_{n+j})^{\circ n_j}  @>I_j'>>\id_\C.
\end{CD}
\end{equation}
\end{enumerate}
The restriction functor $R : \C_G \to \C_A$ is defined as follows:
\begin{align}
 &R\big(\{T(g)\:|\: g \in G\}, \big\{c_{f,g}\:\big|\: f,g\in G\big\}\big) \notag\\
 &= \left(\{T(a)\:|\: a \in A\}, \left\{ i_{a_i,a_j} :=c_{a_j,a_i}^{-1} \circ c_{a_i,a_j},\ i> j\right\},\left\{I_j:= \iota\circ c_{a_{n+j},a_{n+j}^{n_j-1}}\circ\ldots\circ c_{a_{n+j},a_{n+j}}, 1\leq j\leq r\right\}\right),\label{eq:R1}\\ 
 &R\left(\left\{m_g\:\big|\: g\in G\right\}\right) = \big\{m_a\:\big|\: a \in A\big\},\label{eq:R2}
 \end{align}
where one shows that the latter satisfies~\eqref{diag:comm2} and~\eqref{diag:commfin} by combining several diagrams~\eqref{diag:comm}, for $c_{a_i,a_j}$, $c_{a_j,a_i}$, and $c_{a_i,a_i}$. Moreover,~\eqref{eq:i1i2i3} is satisfied. Indeed, we denote: $$T_i := T(a_i),\quad T_{ij} := T(a_ia_j),\quad T_{ijk} := T(a_ia_ja_k),\quad  i,j,k=1,2,3,$$ and, for simplicity, omit the composition sign. Note that $T_{ij} = T_{ji}$ and $T_{ijk}=\ldots=T_{kji}$. We have:
$$
\begin{CD}
T_3T_2T_1@>\rd{c_{a_3,a_2}\circ\id}>>T_{32}T_1@>\rd{c_{a_2,a_3}^{-1}\circ\id}>>T_2T_3T_1@.T_2T_{13}@>c_{a_2,a_1a_3}>>T_{123}\\
@VV\rd{\id\circ c_{a_2,a_1}}V@VVV@VV\rd{\id\circ c_{a_3,a_1}}V@AA\id\circ c_{a_1,a_3}A@AAc_{a_1a_2,a_3}A\\
T_3T_{21}@>c_{a_3,a_1a_2}>>T_{321}@<c_{a_2,a_1a_3}<<T_2T_{31}@>\rd{\id\circ c_{a_1,a_3}^{-1}}>>T_2T_1T_3@>\rd{c_{a_2,a_1}\circ\id}>>T_{21}T_3\\
@VV\rd{\id\circ c_{a_1,a_2}^{-1}}V@AAc_{a_1a_3,a_2}A@.@.@VV\rd{c_{a_1,a_2}^{-1}\circ\id} V\\
T_3T_1T_2@>\rd{c_{a_3,a_1}\circ\id}>>T_{31}T_2@>\rd{c_{a_1,a_3}^{-1}\circ\id}>>T_1T_3T_2@>\rd{\id\circ c_{a_3,a_2}}>>T_1T_{32}@>\rd{\id\circ c_{a_2,a_3}^{-1}}>>T_1T_2T_3\\
@.@.@VVc_{a_1,a_3}\circ\id V@VVc_{a_1,a_2a_3}V@VVc_{a_1,a_2}\circ\id V\\
@.@.T_{13}T_2@>c_{a_1a_3,a_2}>>T_{123}@<c_{a_1a_2,a_3}<<T_{21}T_3
\end{CD}
$$
We then have
\begin{align*}
c_{a_2,a_1}\circ\id&=c_{a_1a2,a_3}^{-1}\circ c_{a_2,a_1a_3}\circ(\id\circ c_{a_1,a_3}),\\
 \left(\id\circ c_{a_2,a_3}^{-1}\right)(\id\circ c_{a_3,a_2})&=\left(c_{a_1,a_2}^{-1}\circ\id\right)\circ c_{a_1a_2,a_3}^{-1}\circ c_{a_1a3,a_2}\circ (c_{a_1,a_3}\circ\id).
\end{align*}
Therefore,
$$
 \left(\id\circ c_{a_2,a_3}^{-1}\right)(\id\circ c_{a_3,a_2})=\left(c_{a_1,a_2}^{-1}\circ\id\right)\circ(c_{a_2,a_1}\circ\id)\left(\id\circ c_{a_1,a_3}^{-1}\right)\circ c_{a_2,a_1a_3}^{-1}\circ c_{a_1a3,a_2}\circ (c_{a_1,a_3}\circ\id),
$$
which finally shows the required equality of the two paths of isomorphisms of functors highlighted in blue starting at $T_3T_2T_1$ and ending at $T_1T_2T_3$.

Finally,~\eqref{eq:finordercond} is a direct consequence of intreated applications of~\eqref{diag:assoc}.
\begin{theo}\label{thm:main0} The functor of restriction $R : \C_G \to \C_A$ defined above is an equivalence of categories.
\end{theo}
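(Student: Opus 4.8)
The plan is to construct an explicit quasi-inverse functor $E\colon \C_A \to \C_G$ and verify that $R\circ E \cong \id_{\C_A}$ and $E\circ R\cong \id_{\C_G}$. The construction of $E$ on objects is the heart of the matter: given an object of $\C_A$, consisting of the functors $T(a_i)$ together with the commutation isomorphisms $i_{a_i,a_j}$ and the finite-order isomorphisms $I_j$, one must define $T(g)$ for every $g\in G$ and the coherence isomorphisms $c_{f,g}$. The natural choice is to fix, for each $g = (k_1,\ldots,k_n,\ell_1,\ldots,\ell_r)\in G$ with $0\le \ell_{n+j-n}<n_j$, a \emph{normal form} word $w(g) = a_1^{\circ k_1}\circ\cdots\circ a_n^{\circ k_n}\circ a_{n+1}^{\circ \ell_1}\circ\cdots\circ a_m^{\circ \ell_r}$ and set $T(g) := T(a_1)^{\circ k_1}\circ\cdots\circ T(a_m)^{\circ \ell_r}$. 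One then defines $c_{f,g}\colon T(f)\circ T(g)\to T(fg)$ as the canonical isomorphism obtained by: (i) concatenating the normal-form words for $f$ and $g$; (ii) using the $i_{a_i,a_j}$'s (and their inverses) to bubble-sort the letters of the concatenation into the normal form of $fg$ modulo the relations; and (iii) using the $I_j$'s to reduce any block $T(a_{n+j})^{\circ n_j}$ to the identity. The hexagon axiom~\eqref{eq:i1i2i3} is precisely what guarantees that this bubble-sort isomorphism does not depend on the order in which adjacent transpositions are performed (this is the standard Coxeter/MacLane coherence phenomenon: any two reduced words for the same permutation differ by braid relations, which here are exactly the hexagons), and~\eqref{eq:finordercond} together with~\eqref{diag:comm2}/\eqref{diag:commfin} ensures compatibility between the sorting and the order reduction.

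Next I would verify that the $c_{f,g}$ so defined satisfy the associativity pentagon~\eqref{diag:assoc}. Here the key point is that both composites $T(f)\circ T(g)\circ T(h)\to T(fgh)$ are, by the above recipe, obtained from the concatenated word $w(f)w(g)w(h)$ by some sequence of adjacent commutations and order-reductions, so it again suffices to invoke the coherence statement that any two such sequences with the same source and target are equal — and this coherence is a formal consequence of the pairwise hexagon relations~\eqref{eq:i1i2i3} and the single-generator relations~\eqref{eq:finordercond}. I would phrase this as a lemma: \emph{given functors $T(a_i)$ and isomorphisms $i_{a_i,a_j}$, $I_j$ satisfying~\eqref{eq:i1i2i3} and~\eqref{eq:finordercond}, there is a well-defined ``straightening'' isomorphism between any two words in the $T(a_i)$ representing the same element of $G$, and these straightening isomorphisms compose correctly}. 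On morphisms, $E$ sends $\{m_a\mid a\in A\}$ to the family $\{m_g\mid g\in G\}$ where $m_g$ is the evident horizontal composite along the normal-form word $w(g)$; the compatibility diagram~\eqref{diag:comm} for $m_g$ reduces, via the recipe for $c_{f,g}$, to the naturality of the $i_{a_i,a_j}$'s expressed in~\eqref{diag:comm2} and the compatibility~\eqref{diag:commfin}, applied letter by letter.

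Finally I would check $R\circ E\cong\id$ and $E\circ R\cong\id$. The composite $R\circ E$ is essentially the identity on $\C_A$ by construction: applied to $(T(a_i),i_{a_i,a_j},I_j)$, one recovers $c_{a_i,a_j}$ as the straightening of the two-letter word, and then $c_{a_j,a_i}^{-1}\circ c_{a_i,a_j}$ is $i_{a_i,a_j}$ (for $i>j$, since in the normal form $a_j$ precedes $a_i$, the word $a_j a_i$ straightens by the identity while $a_i a_j$ straightens by $i_{a_i,a_j}$), and similarly $\iota\circ c_{a_{n+j},a_{n+j}^{n_j-1}}\circ\cdots$ recovers $I_j$ by~\eqref{eq:finordercond}; one gets the identity natural isomorphism, or at worst a canonical one handled by Lemma~\ref{rem:34}. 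For $E\circ R\cong\id_{\C_G}$, given an action $(T(g),c_{f,g},\iota)$, one defines for each $g$ an isomorphism $J_g\colon (E\circ R)(T)(g) = T(a_1)^{\circ k_1}\circ\cdots\circ T(a_m)^{\circ\ell_r}\to T(g)$ by successively composing the original $c_{\cdot,\cdot}$'s along the normal-form word (exactly as in the final step of the proof of Lemma~\ref{lem:free}); one checks, using~\eqref{diag:assoc} repeatedly, that $\{J_g\}$ is a morphism in $\C_G$ and is natural in $T$. The main obstacle — and the step deserving the most care — is establishing the coherence lemma above: making precise, and proving by a MacLane-style ``all diagrams commute'' argument, that the hexagon~\eqref{eq:i1i2i3} plus~\eqref{eq:finordercond} suffice to make the straightening isomorphisms canonical. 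Everything else is a bookkeeping exercise once that lemma is in hand. I would also flag explicitly (referencing Example~\ref{ex:counter}) that without the hexagon axiom the functor $E$ is not even well-defined, since the two-step bubble-sort isomorphisms would genuinely disagree.
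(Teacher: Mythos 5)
Your construction of the quasi-inverse $E$ coincides with the paper's: normal-form words $a_1^{d_1}\cdots a_m^{d_m}$, $T(g)$ defined as the corresponding composite of the $T(a_i)$'s, $c_{f,g}$ defined by a fixed sorting-plus-reduction recipe, and the final verifications $R\circ E=\id_{\C_A}$ and $E\circ R\cong\id_{\C_G}$ also run exactly as in the paper (and as in Lemma~\ref{lem:free}). Where you diverge is the verification of \eqref{diag:assoc}: you reduce it to a ``straightening coherence'' lemma to be established by a MacLane/Matsumoto-style argument, whereas the paper proves it by a long explicit induction on triples $(f,g,h)$ in degree-lexicographic order, peeling one generator off $f$, then off $g$, then off $h$, with the hexagon \eqref{eq:i1i2i3} supplying the base case. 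Your route is viable and arguably more conceptual, but you should be aware that this coherence lemma is where the entire content of the theorem lives --- the paper's induction \emph{is} a proof of exactly the instances of that lemma that are needed --- so as written your proposal states, rather than proves, the crucial step.

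If you do carry out the coherence argument, two points require genuine care rather than a bare appeal to ``all diagrams commute.'' First, Matsumoto--Tits compares only \emph{reduced} words for the sorting permutation, so you must check (i) that the two composite sorting sequences appearing in \eqref{diag:assoc} --- sort $w(f)w(g)$ then sort $w(fg)w(h)$, versus sort $w(g)w(h)$ then sort $w(f)w(gh)$ --- are both reduced (true, because at every stage only inverted pairs of the ambient word are transposed), and (ii) that a braid move occurring between two reduced sorting sequences never involves a repeated letter --- otherwise you would need a hexagon with $i_1=i_3$, which is not part of the given data. (This also holds: a stable sort never fully reverses three positions two of which carry the same letter.) Second, the $I_j$-reductions are not covered by any permutation-coherence statement; their compatibility with one another within a block is exactly \eqref{eq:finordercond}, and you need a separate (easy but explicit) argument that reductions can be deferred to the end of the sorting, plus \eqref{diag:comm2} and \eqref{diag:commfin} for the morphism-level checks. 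Finally, normalize $\iota=\id_\C$ at the outset via Lemma~\ref{rem:34}, as the paper does, so that $T(e)=\id_\C$ holds on the nose.
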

\begin{proof} 
First note that, by Lemma~\ref{rem:34}, we may assume that $\iota = \id_\C$.
We will construct a quasi-inverse functor $E$ to $R$. For this, let $T \in \Ob(\C_A)$. We define 
$$
E(T) = \left(\left\{T\left(a_1^{d_1}\cdot\ldots\cdot a_m^{d_m}\right)\:\Big|\:{d_i \ge 0, 1\le i\le m,\atop\ d_i < n_i,\ n<i\le m}\ \right\},\left\{c_{a_1^{s_1}\cdot\ldots\cdot a_m^{s_m},a_1^{q_1}\cdot\ldots\cdot a_m^{q_m}}\:\Big|\: {{s_i,q_i \ge 0, 1\le i \le m}\atop{s_i, q_i < n_i, n<i\le m}}\right\}\right),
$$
where
\begin{equation}\label{eq:Tai}
T\left(a_1^{d_1}\cdot\ldots\cdot a_m^{d_m}\right) := T(a_1)^{\circ d_1}\circ\ldots\circ T(a_m)^{\circ d_m},\quad T(e) := \id_\C,
\end{equation}
$d_{n+j}$ is taken modulo $n_j$, $1\leq j\leq r$,
and each
$c_{a_1^{s_1}\cdot\ldots\cdot a_m^{s_m},a_1^{q_1}\cdot\ldots\cdot a_m^{q_m}}$ is defined as the appropriate composition of isomorphisms of functors 
\begin{equation}\label{eq:caij}
\id_{T(a_p)^{\circ d}},\ \ i_{a_i,a_j},\ \ I_s,\quad 1\le i,j,p \le m,\ i>j,\ d >0,\ 1\le s\le r,
\end{equation}
that corresponds to turning $a_1^{s_1}\cdot\ldots\cdot a_m^{s_m}\cdot a_1^{q_1}\cdot\ldots\cdot a_m^{q_m}$ into $a_1^{s_1+q_1}\cdot\ldots\cdot a_m^{s_m+q_m}$ by successively exchanging the adjacent powers of $a_i$'s in this product starting with moving $a_m^{s_m}$ to the position next to $a_m^{q_m}$, then similarly continuing with $a_{m-1}^{s_{m-1}}$, and so on, and computing modulo the $n_j$'s whenever needed. Note that we have fixed the above particular way of the successive exchanges, and it will be used later. Finally, $$R(\{m_a\:|\: a \in A\}) := \{m_g\:|\: g\in G\},$$ where each $m_g$ is defined as the appropriate composition of the $m_a$'s following~\eqref{eq:Tai}.

To show that the associativity condition~\eqref{diag:assoc} holds, first note that~\eqref{eq:i1i2i3} implies~\eqref{diag:assoc} for all triples 
\begin{equation}\label{eq:triples}
\left(a_{i_1},a_{i_2},a_{i_3}\right),\quad 1\le i_1,i_2,i_3\le m.
\end{equation}
 Indeed, if $i_1=i_2=i_3 > n$ and  $n_{i_1} =2$, then~\eqref{diag:assoc} follows from~\eqref{eq:finordercond}.
 Now, if $i < j$, then $c_{a_i,a_j} = \id$, so it is sufficient to deal with the triples~\eqref{eq:triples} with $i_1 > i_2 >  i_3$, which is done in~\eqref{eq:i1i2i3}, taking into account that, by~\eqref{eq:Tai} and~\eqref{eq:caij},
\begin{align*}
T(a_{i_2}) T(a_{i_3}) T(a_{i_1}) &= T(a_{i_2}a_{i_3}) T(a_{i_1})=T(a_{i_3}a_{i_2}) T(a_{i_1}),\\
T(a_{i_3}) T(a_{i_1}) T(a_{i_2})&=T(a_{i_3}) T(a_{i_1}a_{i_2})=T(a_{i_3})T(a_{i_2}a_{i_1}),\\
T(a_{i_1}) T(a_{i_2}) T(a_{i_3}) &= T(a_{i_1}a_{i_2}a_{i_3})=T(a_{i_3}a_{i_2}a_{i_1})
\end{align*} 
and
\begin{align*}
&c_{a_{i_3},a_{i_2}} = i_{a_{i_3},a_{i_2}}, &c_{a_{i_3}a_{i_2},a_{i_1}}=\big(i_{a_{i_2},a_{i_1}}\circ \id\big)\circ\big(\id\circ i_{a_{i_3},a_{i_1}}\big),\\ 
&c_{a_{i_2},a_{i_1}}=i_{a_{i_2},a_{i_1}}, &c_{a_{i_3},a_{i_2}a_{i_1}}=\big(\id\circ i_{a_{i_3},a_{i_2}}\big)\circ \big(i_{a_{i_3},a_{i_1}}\circ \id\big).
\end{align*}
The general case will be shown by induction on the triples $(f,g,h)$ ordered degree-lexicographically, where each entry of the triple is ordered degree-lexicographically as well (that is, the degree-lexicographical order on $\N^{3m}$).  The base case, in which $$(f,g,h)=\left(a_{i_1},a_{i_2},a_{i_3}\right),$$ has been done above. 
Moreover, note that, if $f=e$, or $g = e$, or $h=e$, then the statement is a tautology.
Let us show~\eqref{diag:assoc} for a triple $(f,g,h)$ with $f \ne e$ . Let $$f = a_if',$$ where $f'$ does not have $a_1,\ldots,a_{i-1}$ in it. Then, by~\eqref{eq:Tai},
$$
T(a_i) T\left(f'\right) = T\left(a_if'\right)
$$
and, therefore,
$$
T(f) T(g) T(h) = T(a_i)T\left(f'\right) T(g)T(h).
$$
If $f' \ne e$, then, by the inductive hypothesis, all squares in the diagram below are commutative 
$$
\begin{CD}
T(a_i)T\left(f'\right) T(g) T(h)@>\id\circ c_{f',g}\circ\id>>T(a_i) T\left(f'g\right) T(h)@>c_{a_i,f'g}\circ\id>>T\left(a_if'g\right) T(h)\\
@VV\id\circ\id\circ c_{g,h}V@VV\id\circ c_{f'g,h}V@Vc_{a_if'g,h}=c_{fg,h}VV\\
T(a_i) T\left(f'\right) T(gh)@>\id\circ c_{f',gh}>>T(a_i) T\left(f'gh\right)@>c_{a_i,f'gh}>>T\left(a_if'gh\right)=T(fgh)\\
@VVc_{a_i,f'}\circ\id = \id\circ\id V@VVc_{a_i,f'gh}V\\
T\left(a_if'\right)T(gh)=T(f) T(gh)@>c_{a_if',gh}=c_{f,gh}>>T\left(a_if'gh\right)=T(fgh)
\end{CD}
$$
and, by the diagram for the triple $\left(a_i,f',g\right)$, $$c_{a_i,f'g}\circ \left(\id\circ c_{f',g}\right) = c_{a_if',g}\circ \left(c_{a_i,f'}\circ\id\right) = c_{a_if',g}\circ (\id\circ\id) = c_{f,g},$$ which shows~\eqref{diag:assoc}  for $(f,g,h)$ in the case $f \ne a_i$. 

We now continue with the case of triples of the form $(a_i,g,h)$ as above by representing $g = a_jg'$ if $g \ne e$.  So, $c_{a_j,g'} = \id$.
  If $g'\ne e$ and $i>j$, then $c_{a_j,a_i} = \id$, and the commutativity (by the inductive hypothesis) of the square diagrams below:
$$ 
\begin{CD}
T(a_i)T(g)T(h)=T(a_i)T(a_j)T\big(g'\big)T(h)@>\id\circ c_{a_j,g'}\circ\id>>T(a_i)T\big(a_jg'\big)T(h)=T(a_i)T(g)T(h)\\
@VVc_{a_i,a_j}\circ\id\circ\id V@VVc_{a_i,g}\circ\id V\\
T(a_j)T(a_i)T\big(g'\big)T(h)@>\id\circ c_{a_i,g'}\circ\id>>T\big(a_ia_jg'\big)T(h)=T(a_ig)T(h)=T(a_j)T\big(a_ig'\big)T(h)\\
@VV\id\circ\id\circ c_{g',h}V@VV \id\circ c_{a_ig',h} V\\
T(a_j)T(a_i)T\big(g'h\big)@>\id\circ c_{a_i,g'h}>>T(a_j)T\big(a_ig'h\big)\\
@VV c_{a_j,a_i}\circ\id=\id\circ\id V@VVc_{a_j,a_ig'h}V\\
T(a_ja_i)T\big(g'h\big)@>c_{a_ja_i,g'h}>>T\big(a_ja_ig'h\big)=T(a_igh)
\end{CD}
$$
and
$$
\begin{CD}
T(a_j)T\big(a_ig'\big)T(h)@>\id\circ c_{a_ig',h}>>T(a_j)T\big(a_ig'h\big)\\
@VV c_{a_j,a_ig'}\circ\id=\id\circ\id\circ\id V@VVc_{a_j,a_ig'h}V\\
T\big(a_ja_ig'\big)T(h)@>c_{a_ig,h}>>T\big(a_ja_ig'h\big)=T(a_igh)
\end{CD}\quad
\begin{CD}
T(a_i)T(a_j)T\big(g'h\big)@>c_{a_i, a_j}\circ\id>>T(a_ia_j)T\big(g'h\big)\\
@VV \id\circ c_{a_j,g'h} V@VVc_{a_ia_j,g'h}V\\
T(a_i)T\big(a_jg'h\big)@>c_{a_i,gh}>>T(a_igh),
\end{CD}
$$
shows that
\begin{align*}
c_{a_ig,h}\big(c_{a_i,g}\circ\id\big)&=c_{a_j,a_ig'h}\big(\id\circ c_{a_ig',h}\big)\big(c_{a_i,g}\circ\id\big)=c_{a_ja_i,g'h}\big(c_{a_i,a_j}\circ c_{g',h}\big)\\
&= c_{a_i,gh}\big(\id\circ c_{a_j,g'h}\big)\big(\id\circ\id\circ c_{g',h}\big)
=c_{a_i,gh}\big(\id\circ c_{g,h}\big)\big(\id\circ c_{a_j,g'}\circ\id\big)=c_{a_i,gh}\big(\id\circ c_{g,h}\big).
\end{align*}
This implies~\eqref{diag:assoc} in this case as well. 
If $g'\ne e$ and $i \leq j$, then the following square diagrams are commutative by the inductive hypothesis:
$$ 
\begin{CD}
T(a_i)T(a_j)T\big(g'\big)T(h)@>\id\circ c_{a_j,g'}\circ\id>>T\big(a_ia_jg'\big)T(h)=T(a_ig)T(h)=T(a_i)T\big(a_jg'\big)T(h)\\
@VV\id\circ\id\circ c_{g',h}V@VV \id\circ c_{a_jg',h}=\id\circ c_{g,h} V\\
T(a_i)T(a_j)T\big(g'h\big)@>\id\circ c_{a_j,g'h}>>T(a_i)T\big(a_jg'h\big)\\
@VV c_{a_i,a_j}\circ\id=\id\circ\id V@VVc_{a_i,a_jg'h}=c_{a_i,gh}V\\
T(a_ia_j)T\big(g'h\big)@>c_{a_ia_j,g'h}>>T\big(a_ia_jg'h\big)=T(a_igh)
\end{CD}
$$
and
$$
\begin{CD}
T(a_i)T(g)T(h)=T(a_i)T(a_j)T\big(g'\big)T(h)@>\id\circ c_{g',h}>>T(a_ia_j)T\big(g'h\big)\\
@VVc_{a_i,g}\circ\id=\id\circ\id V@VVc_{a_ia_j,g'h}V\\
T(a_ig)T(h)@>c_{a_ig,h}>>T(a_igh),
\end{CD}
$$
with the latter following directly from the definition of the isomorphisms $c$~\eqref{eq:caij} and from~\eqref{eq:finordercond} if $i >n$, $j=i$, $g = a_i^{n_i-1}g''$, and $h$ contains $a_i$.
This implies~\eqref{diag:assoc} in this case too.

Therefore, it remains to treat the case of a triple of the form $(a_i,a_j,h)$. 
As before, let $h=a_lh'$, with $h'$ containing no $a_l'$ with $l' < l$, and suppose that $h'\ne e$. If $l \ge \max\{i,j\}$, then~\eqref{diag:assoc} holds by the definition of the isomorphsims $c$~\eqref{eq:caij} and additionally, if $i=j=l>n$ and $n_i=2$, it follows from~\eqref{eq:finordercond}. If $l < j < i$, then, by definition and the inductive hypothesis, we have the commutativity of the diagrams
$$
\begin{CD}
T(a_i)T(a_j)T(a_l)T\big(h'\big)@>\id\circ c_{a_j,a_l}\circ\id>>T(a_i)T(a_la_j)T\big(h'\big)@>c_{a_i,a_l}\circ\id>>T(a_l)T(a_i)T(a_j)T\big(h'\big)\\
@VVc_{a_i,a_j}\circ\id V@.@VV\id\circ c_{a_i,a_j}\circ\id V\\
T(a_j)T(a_i)T(a_l)T\big(h'\big)@>\id\circ c_{a_i,a_l}\circ\id>>T(a_j)T(a_la_i)T\big(h'\big)@>c_{a_j,a_l}\circ\id=c_{a_j,a_la_i}\circ\id>>T(a_la_ja_i)T\big(h'\big)\\
@VV\id V@VV\id\circ c_{a_la_i,h'}V@VV\id\circ c_{a_ja_i,h'}V\\
T(a_j)T(a_i)T(h)@>\id\circ c_{a_i,h}>>T(a_j)T(a_ih)@>c_{a_j,a_ih}>>T(a_ia_jh)
\end{CD}
$$
and
$$
\begin{CD}
T(a_j)T(a_i)T(h)@>\id\circ c_{a_i,h}>>T(a_j)T(a_ih)\\
@VV\id V@VVc_{a_j,a_ih}V\\
T(a_ja_i)T(h)@>c_{a_ia_j,h}>>T(a_ia_jh)
\end{CD}\quad\quad\quad
\begin{CD}
T(a_l)T(a_i)T(a_j)T\big(h'\big)@>\id\circ c_{a_j,h'}>> T(a_l)T(a_i)T\big(a_jh'\big)\\
@VV\id\circ c_{a_i,a_j}\circ\id V@VV\id\circ c_{a_i,a_jh'}V\\
T(a_l)T(a_ja_i)T\big(h'\big)@>\id\circ c_{a_ja_i,h'}>>T(a_ia_jh)
\end{CD},
$$
which show that
\begin{align*}
c_{a_ia_j,h}\big(c_{a_i,a_j}\circ\id\big)&=c_{a_j,a_ih}\big(\id\circ c_{a_i,h}\big)\big(c_{a_i,a_j}\circ\id\big)\\
&=\big(\id\circ c_{a_ja_i,h'}\big)\big(\id\circ c_{a_i,a_j}\circ\id\big)\big(c_{a_i,a_l}\circ\id\big)\big(\id\circ c_{a_j,a_l}\circ\id\big)\\
&=\big(\id\circ c_{a_i,a_jh'}\big)\big(\id\circ c_{a_j,h'}\big)\big(c_{a_i,a_l}\circ\id\big)\big(\id\circ c_{a_j,a_l}\circ\id\big)\\
&=\big(\id\circ c_{a_i,a_jh'}\big)\big(c_{a_i,a_l}\circ\id\big)\big(\id\circ c_{a_j,h'}\big)\big(\id\circ c_{a_j,a_l}\circ\id\big)=c_{a_i,a_jh}\big(\id\circ c_{a_j,h}\big).
\end{align*}
Hence, we have shown~\eqref{diag:assoc} in this case.
If $j < l < i$, then, by definition, the following diagram is commutative
$$
\begin{CD}
T(a_i)(a_j)T(a_l)T\big(h'\big) @>c_{a_i,a_jh}>> T(a_ia_jh)\\
@VVc_{a_i,a_j}\circ\id V@AA \id\circ c_{a_i,h'}A\\
T(a_ia_j)T(a_l)T\big(h'\big)@>\id\circ c_{a_i,a_l}\circ\id>>T(a_ja_la_i)T\big(h'\big),
\end{CD}
$$
$c_{a_j,h} = \id$, and $$c_{a_ja_i,h} = \id\circ c_{a_i,h}=\id\circ c_{a_i,a_l}\circ\id=\id\circ c_{a_i,h'}.$$ Hence, we have shown~\eqref{diag:assoc} in this case as well. Finally, if $i < j$, then, by definition,
$$
\begin{CD}
T(a_i)T(a_j)T(h)@>\id\circ c_{a_j,h}>> T(a_i)T(a_jh)\\
@VV\id=c_{a_i,a_j} V@VV c_{a_i,a_jh} V\\
T(a_ia_j)T(h)@>c_{a_ia_j,h}>>T(a_ia_jh)
\end{CD}
$$
is commutative, showing~\eqref{diag:assoc} in this case too.
Therefore, we have reduced showing~\eqref{diag:assoc} to the case of a triple $(a_i,a_j,a_l)$, which is the base of the induction. This completes our induction, showing~\eqref{diag:assoc} for all triples $(f,g,h)$.

For all $T_1,T_2 \in \Ob(\C_A)$ and $m \in \MorC_{\C_A}(T_1,T_2)$, we define $$E(m) := \left\{m_g\:\big|\: g \in G\right\},$$ where
$$
m_{a_1^{d_1}\cdot\ldots\cdot a_m^{d_m}} := m_{a_1}^{\circ d_1}\circ\ldots\circ m_{a_m}^{\circ d_m}.
$$
By~\eqref{diag:comm2} and~\eqref{eq:caij}, for all $f,g \in G$, diagram~\eqref{diag:comm} is commutative.

By construction~\eqref{eq:R1} and~\eqref{eq:R2}, $R\circ E = \id_{\C_A}$. 
It remains to show that $$E\circ R \cong \id_{\C_G}.$$
Indeed, let $T_1,T_2 \in \Ob\big(\C_G\big)$, $m \in \MorC_{\C_G}(T_1,T_2)$, and $g \in G$. Then the diagram of morphisms of functors
$$
\begin{CD}
(E\circ R)(T_1)(g)@>I_{T_1,g}>> T_1(g)\\
@V(E\circ R)(m)_gVV@Vm_gVV\\
(E\circ R)(T_2)(g)@>I_{T_2,g}>> T_2(g)
\end{CD}
$$
is commutative, where, for each $T \in \Ob\big(\C_G\big)$, the isomorphism of functors $$I_{T,g} : (E\circ R)(T)(g)\to T(g)$$ is defined by successively composing isomorphisms of functors of the form $$c_{a_1^{d_1}\cdot\ldots\cdot a_i^{d_i},a_{i+1}^{d_{i+1}}\cdot\ldots\cdot a_m^{d_m}},$$
which completes the proof.
\end{proof} 

\begin{ex}\label{ex:counter} If one does not require that~\eqref{eq:i1i2i3} hold, one can obtain a non-associative semigroup action. Indeed, let $\C = \vect_\Qb$, $A = \{a_1,a_2,a_3\}$. Define $$T(a_i)(V)= \Qb\oplus V,\quad T(a_i)(\varphi) = \id_\Qb\oplus\varphi,\quad V,W \in \Ob(\C),\ \varphi \in \Hom(V,W),\ i=1,2,3.$$ For all $M \in \GL_2(\Qb)$ and $1\le i,j\le 3$, $\phi_M\oplus\id$ defines an isomorphism $T(a_i)\circ T(a_j) \to T(a_j)\circ T(a_i)$, where $\phi_M : \Qb^2\to \Qb^2$ is multiplication by $M$. Then, for all $M_1,M_2,M_3 \in \GL_2(\Qb)$ such that
$$
\begin{pmatrix}
M_1 &0\\
0& 1
\end{pmatrix}
\begin{pmatrix}
1 &0\\
0& M_2
\end{pmatrix}
\begin{pmatrix}
M_3 &0\\
0& 1
\end{pmatrix}
\ne
\begin{pmatrix}
1 &0\\
0& M_3
\end{pmatrix}
\begin{pmatrix}
M_2 &0\\
0& 1
\end{pmatrix}
\begin{pmatrix}
1 &0\\
0& M_1
\end{pmatrix},
$$
diagram~\eqref{eq:i1i2i3} is not commutative if we set $$i_{a_3,a_2} = \phi_{M_1}\oplus\id,\quad i_{a_3,a_1}=\phi_{M_2}\oplus\id,\quad\text{and}\quad i_{a_2,a_1} = \phi_{M_3}\oplus\id.$$ For instance, we can take
$$
M_1 = \begin{pmatrix}
1&1\\
0&-1
\end{pmatrix},\quad
M_2 = \begin{pmatrix}
1&1\\
0&-1
\end{pmatrix},\quad\text{and}\quad
M_3 = \begin{pmatrix}
-1&1\\
0&1
\end{pmatrix}.
$$
\end{ex}
\begin{ex} Let $\C=\vect_{\overline{\Qb}(t)}$, $n \ge 2$, and $a$ be a primitive $n$th root of unity. Then $$\sigma : \overline{\Qb}(t) \to \overline{\Qb}(t),\quad t\mapsto at,$$ defines a field automorphism of $\overline{\Qb}(t)$ of order $n$. Define an  action $T$ of $\Zb/n\Zb$ on $\C$ by
$$
T(1): V \mapsto {^\sigma\!}V := V\otimes_{\overline{\Qb}(t)}\overline{\Qb}(t),\quad fv\otimes 1 = v\otimes\sigma(f),\ \ v\in V, \ f\in \overline{\Qb}(t),
$$
as in Section~\ref{subsec:action}. Note that, for every $b\in \overline{\Qb(t)}$ and the isomorphisms
$$
I : T(1)^n(V) \to V,\ \ v\otimes 1\otimes\ldots\otimes 1\mapsto bv,\quad V \in \Ob(\C),\ \ v \in V,
$$ 
the diagram~\eqref{eq:finordercond} is commutative (and the action, therefore, satisfies~\eqref{diag:assoc}) if and only if $\sigma(b)=b$. For example, for $b=t$, $\sigma(b)\ne b$. This shows that, in general, one cannot avoid the requirement~\eqref{eq:finordercond}.
\end{ex}

\begin{ex}\label{ex:commuteandnot}
Let $K = \Cb(\exp(x),x)$, $\partial_x=\frac{d}{dx}$, $\sigma_1(x) =x+s_1$, $\sigma_2(x)=s_2x$, $s_1\in \Cb\setminus\{0\}$, $s_2\in\Zb\setminus\{0,1\}$. Note that 
\begin{equation}\label{eq:commutation}
\partial_x\sigma_1=\sigma_1\partial_x\quad\text{and}\quad \partial_x\sigma_2=s_2\sigma_2\partial_x.
\end{equation} Consider the differential equation:
\begin{equation}\label{eq:xy}
\partial_x(y)=xy.
\end{equation}
Every object in the rigid tensor category $\C$ generated by the differential module associated to~\eqref{eq:xy} and all its iterates under $\sigma_1$ and $\sigma_2$ (see \cite[Section~2.2]{SingerPut:differential} and Example~\ref{ex:diffmod}) is a direct sum of differential modules associated to differential equations of the form
\begin{equation}\label{eq:objectofC}
\partial_x(y) = (nx+m)y,\quad n\in\Zb,\ m \in\Cb.
\end{equation}
Applying $\sigma_1\circ \sigma_2$ to~\eqref{eq:objectofC}, we obtain, using~\eqref{eq:commutation},
\begin{equation}\label{eq:tx2}
\partial_x(y)=\big(s_2^2nx+s_2m+s_1s_2^2n\big){x}y,
\end{equation}
while, applying $\sigma_2\circ\sigma_1$ to~\eqref{eq:objectofC}, we arrive at, using~\eqref{eq:commutation},
\begin{equation}\label{eq:tx3}
\partial_x(y)=\big(s_2^2nx+s_2m+s_1s_2n\big)y.
\end{equation}
Note that~\eqref{eq:tx2} and~\eqref{eq:tx3} are gauge-equivalent over $K$ if and only if  the differential equation in $c$ 
$$
\big(s_2^2nx+s_2m+s_1s_2^2n\big)-\big(s_2^2nx+s_2m+s_1s_2n\big)= s_1s_2n(s_2-1) = \frac{\partial_x(c)}{c}
$$
has a solution $\exp\big((s_2-1)s_1s_2nx\big) \in K$ if and only if $(s_2-1)s_1s_2n \in \Zb$. This is the case uniformly for all $n \in \Zb$ if and only if 
\begin{equation}\label{eq:condition}
 s_1(s_2-1)s_2 \in \Zb.
\end{equation}
Assume that~\eqref{eq:condition} holds for the rest of the example.
Then, if we let $G = \N\times \N$ and define an action $T$ of $G$ on $\C$  by 
$$
G \ni (1,0) =:a_1 \mapsto \sigma_1,\quad G \ni (0,1)=:a_2 \mapsto \sigma_2,
$$
that is, for all objects $M$ and $N$ of $\C$,
$$
T(a_i)(M) := M\otimes_{\sigma_i} K,\quad \left(T(a_i)(\varphi)\right)(m\otimes 1) := \varphi(m)\otimes 1,\ \ \varphi\in\Hom(M,N),\ m\in M,\quad i=1,2,$$ 
then we obtain isomorphisms of functors
$$
i_{a_1,a_2} : T(a_1)\circ T(a_2) \to T(a_2)\circ T(a_1).
$$ 
Indeed, for all one-dimensional objects (the rest of the objects in $\C$ are just direct sums of these) $M$ and $N$ of $\C$ and all $\varphi \in \Hom(M,N)$, we have the commutative diagram
\begin{equation}\label{CD:iT}
\begin{CD}
M\otimes_{\sigma_1}K\otimes_{\sigma_2}K @>{i_{a_1,a_2}}_M>> M\otimes_{\sigma_2}K\otimes_{\sigma_1}K\\
@VVT(a_1)\left(T(a_2)(\varphi)\right)V @VVT(a_2)\left(T(a_1)(\varphi)\right)V\\
N\otimes_{\sigma_1}K\otimes_{\sigma_2}K @>{i_{a_1,a_2}}_N>> N\otimes_{\sigma_2}K\otimes_{\sigma_1}K
\end{CD}
\end{equation}
where $${i_{a_1,a_2}}_M = \exp(n(M)x)\cdot\id_M\otimes\id_K\otimes\id_K\quad\text{and}\quad {i_{a_1,a_2}}_N = \exp(n(N)x)\cdot\id_N\otimes\id_K\otimes\id_K.$$ 
Indeed, if $M = \Span\{e\}$ and $N = \Span\{f\}$, with $\partial_x(e)=ae$, $\partial_x(f)=bf$, and $\varphi(e)=df$, then
$$\partial_x(d)f+dbf=\partial_x(df)=\partial_x(\varphi(e))=\varphi(\partial_x(e))= \varphi(ae)=a\varphi(e)=adf.
$$
Therefore, \begin{equation}\label{eq:morphism}\partial_x(d)=(a-b)d
\end{equation}
and, if $n(M) \ne n(N)$, then~\eqref{eq:morphism}, as a differential equation in $d$, has no non-zero solutions in $K$, which proves the commutativity of~\eqref{CD:iT}.
This example shows that $i_{a_1,a_2}$ is not necessarily $\id$ for a commutative semigroup.

Note that, given~\eqref{eq:condition}, depending on the actual denominator of $s_1$, one of $$K\big(\exp(x^2/2)\big),\quad K\big(\exp(x^2/2),\exp(x/s_2)\big),\quad K\big(\exp(x^2/2),\exp(x/(s_2-1))\big),\quad K\big(\exp(x^2/2),\exp(x/(s_2(s_2-1))\big)$$ is a minimal $\big\{\partial_x,\sigma_1,\sigma_2\big\}$-field extension of $K$ that contains a non-zero solution of~\eqref{eq:xy} and whose $\partial_x$-constants coincide with $\Cb$ (cf. \cite[Definition~2.4]{DHWDependence}).
\end{ex}

We will now see how the classical contiguity relations for the hypergeometric functions are reflected in our Tannakian approach.
\begin{ex}\label{ex:hyper} Let $K = \Cb(a,b,c,z)$, and consider it as a $\N^3$-field, with the action of the generators $\sigma_1$, $\sigma_2$, and $\sigma_3$ defined as $$\sigma_1(a) = a+1,\quad \sigma_2(b)= b+1,\quad \sigma_3(c) = c+1.$$ Let $M$ be the differential module corresponding to the hypergeometric differential equation
\begin{equation}\label{eq:hg}
z(1-z)y''+\big(c-(a+b+1)z\big)y'-aby=0,
\end{equation}
whose companion matrix is
$$
A:=\begin{pmatrix}
0 &1\\
\frac{ab}{z(1-z)} & \frac{(a+b+1)z-c}{z(1-z)}
\end{pmatrix}.
$$
Following a computation in {\sc Maple} using the {\tt dsolve} procedure, the field 
$$K\big({_2F_1}(a,b;c;z),\partial_z\big({_2F_1}(a,b;c;z)\big),z^{1-c}{_2F_1(a-c+1,b-c+1;2-c;z)}, z^{1-c}\partial_z\big({_2F_1(a-c+1,b-c+1;2-c;z)}\big) \big)$$ is a 
Picard--Vessiot field containing a complete set of solutions of~\eqref{eq:hg}. Its transcendence degree over $K$ is $4$, because its differential Galois group over $K$ is $\GL_2$, whose dimension is $4$. 
The classical contiguity relations for $_2F_1$, that is, linear in $K$ expressions of $g({_2F_1})$, $g \in \N^3$, via $_2F_1$ and $\partial_z({_2F_1})$, can be seen in Tannakian terms by observing that the differential module $M$ is isomorphic to the differential modules $T(\sigma_i)(M)$ over $K$ via the gauge transformations $C_i^{-1}AC_i-C_i^{-1}\partial_z(C_i)$, $1\leq i \leq 3$, where
$$
C_1 := \begin{pmatrix}
\frac{c-zb-a-1}{a} & \frac{z(z-1)}{a}\\
b& z-1
\end{pmatrix},\quad
C_2 := \begin{pmatrix}
\frac{c-za-b-1}{b} & \frac{z(z-1)}{b}\\
a& z-1
\end{pmatrix},\quad
C_3 := \begin{pmatrix}
c&z\\
\frac{ab}{1-z}& \frac{z(a+b-c)}{1-z}
\end{pmatrix},
$$
respectively,
which can be found, for instance, using the {\tt dsolve} procedure of {\sc Maple}. 

More generally, (non-linear) relations between solutions of parameterized differential and difference equations and their orbits under the action of a monoid $G$, can be exhibited in the Tannakian terms by comparing the tensor categories generated by $T(g)(M)$, $g \in G$.  Developing general algorithms to attack this problem, including efficient termination criteria, is left for future research (cf. \cite[Section~3.2.1 and Proposition~3.2]{MiOvSi} for the case of differential parameters). See also \cite[Examples~2.2 and 3.2]{OvAAM} for the $q$-difference analogue of the hypergeometric functions, where its isomonodromy properties are explicitly computed.
\end{ex}
\subsection{Semigroup actions on tensor categories}\label{sec:semigrouptensor}
\begin{defi}\label{def:Gtensor} A $G$-$\otimes$-category is an abelian tensor category $\C$ together with an action $T$ of  $G$ on $\C$ such that
\begin{enumerate}
\item for all $g \in G$,  $T(g) : \C \to \C$ is a right-exact tensor functor and
\item  for all $f,g \in G$, $c_{f,g} : T(f)\circ T(g) \to T(fg)$ and $\iota : T(e) \to \id_\C$ are isomorphisms of tensor functors.
\end{enumerate}
\end{defi}
\begin{rem}
The right-exactness (see also Remark~\ref{rem:rigidexact}) appears in Definition~\ref{def:Gtensor} because it is used in our main result,
Theorem~\ref{theo: sTannakianmain}, so that we are able to apply Theorem~\ref{theo: scalar extension for categories} there.
\end{rem}

\begin{prop}\label{prop:310} Let $$G \cong\N^n\times \Zb/{n_1}\Zb\times\ldots\times\Zb/{n_r}\Zb,\quad n_j \geq 1,\ 1\leq j\leq r,$$ with a selected set  $\{a_1,\ldots,a_m\}$, $m=n+r$, of generators corresponding to the decomposition. Then defining a $G$-$\otimes$ category structure on an abelian tensor category $\C$ is equivalent to defining:
\begin{enumerate}
\item right-exact tensor functors $T(a_i) :\C \to \C$, $1\leq i\leq m$, 
\item isomorphisms of tensor functors $i_{a_i,a_j} : T(a_i)\circ T(a_j) \stackrel{\sim}{\longrightarrow} T(a_j)\circ T(a_i)$, $1\leq i,j \leq m$, that satisfy the hexagon axiom~\eqref{eq:i1i2i3}, and
\item isomorphisms of tensor functors $I_j:T(a_{n+j})^{\circ n_j}\to \id_\C$, $1\leq j\leq r$ that satisfy~\eqref{eq:finordercond}.
\end{enumerate}
\end{prop}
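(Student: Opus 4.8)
The plan is to deduce this from Theorem~\ref{thm:main0}, the point being that the equivalence $R\colon\C_G\to\C_A$ constructed there is assembled entirely out of operations internal to the $2$-category of abelian tensor categories, right-exact tensor functors and isomorphisms of tensor functors --- namely composition and whiskering of functors, and vertical composition and inversion of natural isomorphisms --- and therefore ``upgrades'' verbatim once one reads ``functor'' as ``right-exact tensor functor'' and ``isomorphism of functors'' as ``isomorphism of tensor functors''. Concretely, I would let $\C^\otimes_G$ be the non-full subcategory of $\C_G$ whose objects are the $G$-$\otimes$-category structures on $\C$ (Definition~\ref{def:Gtensor}) and whose morphisms are those $\{m_g\}$ with each $m_g$ a morphism of tensor functors, and let $\C^\otimes_A$ be the non-full subcategory of $\C_A$ whose objects are the data $\big(\{T(a_i)\},\{i_{a_i,a_j}\},\{I_j\}\big)$ in which every $T(a_i)$ is a right-exact tensor functor and every $i_{a_i,a_j}$ and $I_j$ is an isomorphism of tensor functors (this making precise the data (i)--(iii) of the statement), with morphisms the families of morphisms of tensor functors. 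By the displayed formulas \eqref{eq:R1} and \eqref{eq:R2}, the restriction functor of Theorem~\ref{thm:main0} carries $\C^\otimes_G$ into $\C^\otimes_A$, and the assertion to prove is that this restricted functor $R\colon\C^\otimes_G\to\C^\otimes_A$ is an equivalence.

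That $R$ really does land in $\C^\otimes_A$ is the first thing I would verify: by Definition~\ref{def:Gtensor} each $T(a_i)$ is a right-exact tensor functor, and each $i_{a_i,a_j}=c_{a_j,a_i}^{-1}\circ c_{a_i,a_j}$ and $I_j=\iota\circ c_{a_{n+j},a_{n+j}^{n_j-1}}\circ\cdots\circ c_{a_{n+j},a_{n+j}}$ is a vertical composite and inverse of isomorphisms of tensor functors, hence again one; the hexagon \eqref{eq:i1i2i3} and the relation \eqref{eq:finordercond} for this data are exactly the diagram computations carried out in the text just before Theorem~\ref{thm:main0}. Next I would re-examine the quasi-inverse $E$ built in the proof of Theorem~\ref{thm:main0}. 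There $T(g)$ is defined as $T(a_1)^{\circ d_1}\circ\cdots\circ T(a_m)^{\circ d_m}$ (with the exponents for $i>n$ reduced modulo $n_i$, and $T(e)=\id_\C$), which is again a right-exact tensor functor since finite composites of right-exact tensor functors are such; and every reconstructed $c_{f,g}$ is assembled from identity $2$-cells, the $i_{a_i,a_j}$ and the $I_s$ by vertical composition and by whiskering along the tensor functors $T(a_p)^{\circ d}$, all of which preserve the property of being an isomorphism of tensor functors, so $c_{f,g}$ is one; and $\iota=\id_\C$ is trivially one (the appeal to Lemma~\ref{rem:34} remains legitimate because for $g=e$ the replacement is by $\id_\C$ with its canonical tensor structure, and $\iota$ is an isomorphism of tensor functors by hypothesis). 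On morphisms, the formula $m_g=m_{a_1}^{\circ d_1}\circ\cdots\circ m_{a_m}^{\circ d_m}$ again only composes and whiskers morphisms of tensor functors, so $E$ restricts to a functor $\C^\otimes_A\to\C^\otimes_G$.

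It remains to see that the two natural transformations witnessing $R\circ E=\id$ and $E\circ R\cong\id$ in the proof of Theorem~\ref{thm:main0} survive the restriction: $R\circ E=\id_{\C^\otimes_A}$ holds on the nose, and the isomorphism $E\circ R\cong\id_{\C^\otimes_G}$ is given componentwise by the isomorphisms $I_{T,g}$, which are composites of the structure isomorphisms $c_{\cdot,\cdot}$ of $T$ and hence, for $T\in\C^\otimes_G$, are isomorphisms of tensor functors. Thus $R$ and $E$ are quasi-inverse equivalences $\C^\otimes_G\simeq\C^\otimes_A$, which is the claim.

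I expect the only step requiring genuine care, rather than bookkeeping, to be the verification that the explicit recipe for $c_{f,g}$ in the proof of Theorem~\ref{thm:main0} uses only operations under which ``isomorphism of tensor functors'' is stable --- vertical composition, inversion, and whiskering by the tensor functors $T(a_p)^{\circ d}$ --- and, relatedly, that the modular reduction $d_{n+j}\rightsquigarrow d_{n+j}\bmod n_j$ is compatible with the tensor structures involved. The latter is precisely what the hypothesis that each $I_j$ be an isomorphism of \emph{tensor} functors supplies, and the former is exactly why Definition~\ref{def:Gtensor} insists that each individual $T(a_i)$ be a tensor functor rather than only the composites $T(g)$.
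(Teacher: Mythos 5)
Your proposal is correct and follows essentially the same route as the paper, whose proof simply invokes Theorem~\ref{thm:main0} together with the discussion preceding it; you have merely spelled out the (routine but worthwhile) verification that the equivalence $R\colon\C_G\to\C_A$ and its quasi-inverse $E$ are built only from composition, inversion and whiskering, and hence restrict to the tensor-enriched subcategories.
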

\begin{proof} This follows from Theorem~\ref{thm:main0} and the discussion that directly precedes it.
\end{proof}

\begin{cor}Moreover, we have:
\begin{enumerate}
\item If $m=n=1$, that is $G \cong \N$, then a defining $G$-$\otimes$ category structure on an abelian tensor category $\C$ is equivalent to defining a right-exact tensor functor $T(a_1) :\C \to \C$.
\item If $m=2$, then (as noted and used in Example~\ref{ex:commuteandnot}) the hexagon axiom~\eqref{eq:i1i2i3} is not needed, because it becomes non-trivial only for $m\geq 3$.
\end{enumerate}
\end{cor}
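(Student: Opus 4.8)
The plan is to deduce both assertions directly from Proposition~\ref{prop:310} by tracking which of the three packets of data and which of the constraints degenerate in the stated ranges of $m$.

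For part~(i), note that $G\cong\N$ forces $n=m=1$ and $r=0$. I would first observe that item~(iii) of Proposition~\ref{prop:310} is then empty, since there are no cyclic factors and hence no functors $I_j$ to prescribe. Next, in the definition of $\C_A$ the commutativity isomorphisms $i_{a_i,a_j}$ are indexed by pairs $1\le j<i\le m$; for $m=1$ this index set is empty, so item~(ii) contributes no data, and the hexagon axiom~\eqref{eq:i1i2i3}, being indexed by triples $1\le i_1<i_2<i_3\le 1$, is vacuous. Thus the only surviving datum is the right-exact tensor functor $T(a_1)$ of item~(i), with no compatibility conditions imposed, which is exactly the claimed equivalence.

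For part~(ii), the key point is purely combinatorial: for $m=2$ there is no triple $1\le i_1<i_2<i_3\le 2$, so the hexagon axiom~\eqref{eq:i1i2i3} is an empty condition. Hence item~(ii) of Proposition~\ref{prop:310} reduces to prescribing the functors $T(a_1),T(a_2)$ together with the single isomorphism $i_{a_2,a_1}\colon T(a_2)\circ T(a_1)\xrightarrow{\sim}T(a_1)\circ T(a_2)$, with no further constraint on it; the remaining data, namely the $I_j$ subject to~\eqref{eq:finordercond}, are unaffected (and absent altogether when $G\cong\N^2$). This is precisely the situation exploited in Example~\ref{ex:commuteandnot}, and it makes explicit that the hexagon axiom only begins to carry content once $m\ge 3$.

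I do not expect any genuine obstacle: both statements are bookkeeping corollaries of Proposition~\ref{prop:310}. The one mild subtlety worth spelling out is that for $m=1$ the category $\C_A$ carries no isomorphism $i_{a_1,a_1}$ at all --- it is simply not part of the data, as the indexing runs over $j<i$ --- so that $\C_A$ is literally the category of endofunctors of $\C$, and the tensor refinement then returns exactly a right-exact tensor functor.
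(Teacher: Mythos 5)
Your proposal is correct and follows exactly the intended argument: the corollary is a bookkeeping consequence of Proposition~\ref{prop:310}, obtained by observing that the index set $1\le j<i\le m$ for the $i_{a_i,a_j}$ is empty when $m=1$ and the index set $1\le i_1<i_2<i_3\le m$ for the hexagon~\eqref{eq:i1i2i3} is empty when $m\le 2$. Your remark resolving the apparent discrepancy between the ``$1\le i,j\le m$'' indexing in the statement of Proposition~\ref{prop:310} and the ``$j<i$'' indexing in the definition of $\C_A$ is a worthwhile clarification, but otherwise there is nothing to add.
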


If $\C$ is a $G$-$\otimes$-category, then $R:=\End(\1)$ is naturally a difference ring via $$T(g)\colon \End(\1)\to\End(T(g)(\1))\simeq\End(\1),\quad g\in G.$$
The latter isomorphism is derived from the uniqueness of the identity object and the fact that a tensor functor respects identity objects.
Note that, for all $g \in G$, $T(g)\colon\C\to \C$ is {\it $T(g)$-linear}. That is, $$T(g)(r\varphi)=T(g)(r)T(g)(\varphi)$$ for every morphism $\varphi$ in $\C$ and $r\in R$.

\begin{defi}
Let $R$ be $G$-ring. An \emph{$R$-linear $G$-$\otimes$-category} is a $G$-$\otimes$-category that is $R$-linear and such that the canonical ring morphism $l : R\to \End(\1)$ is a morphism of $G$-rings.   
An \emph{$R$-linear $G$-$\otimes$-category} is said to be over $R$ if $l$ is an isomorphism of $G$-rings.
\end{defi}

The following is the prototypical example of a difference tensor category.

\begin{ex}\label{ex:GModR}
Let $R$ be a $G$-ring. The category $\mod_R$ of $R$-modules is naturally a $G$-$\otimes$-category:
\begin{itemize}
\item The tensor product is the usual tensor product of $R$-modules. 
\item The right-exact tensor functor $$T(g)\colon \mod_R\to\mod_R,\ M\rightsquigarrow {\hs M}$$ is given by base extension via $g\colon R\to R$. That is, $$T(g)(M)={\hs M}=M\otimes_R R.$$ The $R$-module structure of $\hs M$ comes from the right factor.  So, explicitly for $m\in M$ and $r,s\in R$ we have $$s\cdot(m\otimes r)=m\otimes sr \ \text{ and } \ sm\otimes r=m\otimes g(s)r.$$ 
\item For all $g$, $h \in G$, and an $R$-module $M$, $$c_{g,h} : T(g)T(h)(M) \to T(gh)(M),\quad m\otimes r\otimes s \mapsto m\otimes g(r)s,\quad m\in M,\ r,s \in R.$$
\item The functorial isomorphism, which is part of the data of a tensor functor, is the natural one: $${\hs M}\otimes {\hs N}\simeq {\hs(M\otimes N)}.$$
\item
The identity object $(\1, e)$ is the free $R$-module $\1=Rb$ of rank one with basis $b$ together with $e\colon \1\to\1\otimes \1$ determined by $e(b)=b\otimes b$.
\end{itemize}
Note that, by identifying $R$ with $\End(\1)$, we recover the original $T(g)\colon R\to R$ from $T(g)\colon \End(\1)\to \End(\1)$.
\end{ex}

In what follows, we will always consider the category of modules over a $G$-ring with the above described $G$-$\otimes$-structure. In particular, if $k$ is a $G$-field, then $\vect_k$ is naturally a $G$-$\otimes$-category (over $k$).

\begin{defi}
Let $\C$ and $\D$ be $G$-$\otimes$-categories 
via $T_\C$ and $T_\D$, respectively. A \emph{$G$-$\otimes$-functor} $\C\to\D$ is a pair $(F,\alpha)$ comprising a 
tensor functor $F\colon \C\to\D$ and a set of  isomorphisms of 
tensor functors $$\alpha = \left\{\alpha_g\colon F\circ T_\C(g)\rightarrow T_\D(g)\circ F\colon \C\to\D\:|\: g \in G\right\}$$
such that, for all $f, g\in G$, the following diagram is commutative 
\begin{equation}\label{eq:Gfunctor}
\begin{CD}
F\circ T_\C(f)\circ T_\C(g)@>\id\circ {c_\C}_{f,g}>> F\circ T_\C(fg)\\
@VV\left(\id\circ\alpha_g\right)\left(\alpha_f\circ\id\right)V@VV\alpha_{fg}V\\
T_\D(f)\circ T_\D(g)\circ F@>{c_\D}_{f,g}\circ\id>>T_\D(fg)\circ F
\end{CD}
\end{equation}
\end{defi}

\begin{prop}  \label{prop: good def of Gtensor functor}Let $$G \cong \N^n\times \Zb/{n_1}\Zb\times\ldots\times\Zb/{n_r}\Zb,\quad n_j \geq 1,\ 1\leq j\leq r,$$ with a selected set  $\{a_1,\ldots,a_m\}$, $m=n+r$, of generators corresponding to the decomposition. Then the set $\alpha$ can be replaced with its finite subset $$\left\{\alpha_{a_i}\colon F\circ T_\C(a_i)\rightarrow T_\D(a_i)\circ F\colon \C\to\D\:|\: 1\leq i\leq m\right\}$$ and the former of the sets of commutative diagrams in~\eqref{eq:Gfunctor} can be replaced with the following finite set of commutative diagrams: 
$$
\xy
(0,0)*+{F\circ T_\C(a_i)\circ T_\C(a_j)}="a", 
(20,18)*+{ F\circ T_\C(a_j)\circ T_\C(a_i)}="b",
(60,18)*+{ T_\D(a_j)\circ F \circ T_\C(a_i)}="c", 
(80,0)*+{T_\D(a_j)\circ T_\D(a_i)\circ F}="d",
(60,-18)*+{T_\D(a_i)\circ T_\D(a_j)\circ F}="e", 
(20,-18)*+{T_\D(a_i)\circ F \circ T_\C(a_j)}="f"
\ar^{\id\circ {i_\C}_{a_i,a_j}} @{->} "a";"b"
\ar^{\alpha_{a_j}\circ\id} @{->} "b";"c"
\ar^{\id\circ\alpha_{a_i}} @{->} "c";"d"
\ar_{\alpha_{a_i}\circ\id} @{->} "a";"f"
\ar^{\id\circ\alpha_{a_j}} @{->} "f";"e"
\ar_{{i_\D}_{a_i,a_j}\circ\id} @{->} "e";"d"
\endxy \quad\quad i>j,\ 1\leq i,j\leq m.
$$
and, for all $i$, $n < i \le m$,
$$
\begin{CD}
F\circ T_\C(a_i)^{n_i}@>\id\circ {I_i}_{\C}>> F\\
@V\alpha_{a_i}^{n_i}VV@|\\
T_\D(a_i)^{n_i}\circ F@>{I_i}_\D\circ\id>>F.
\end{CD}
$$
\end{prop}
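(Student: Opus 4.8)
The plan is to prove this by the same method as Theorem~\ref{thm:main0}, with the coherence isomorphisms $\alpha$ now playing the role that the associativity isomorphisms $c$ played there; the statement is really the ``morphism'' counterpart of Theorem~\ref{thm:main0} and Proposition~\ref{prop:310}. Concretely, I would exhibit the restriction assignment $(F,\alpha)\mapsto \big(F,\{\alpha_{a_i}\}_{1\le i\le m}\big)$ as a bijection between $G$-$\otimes$-functor structures on a fixed tensor functor $F\colon\C\to\D$ and collections $\{\alpha_{a_i}\}$ of isomorphisms of tensor functors satisfying the two displayed finite families of diagrams, by producing a two-sided inverse to it.

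For the easy direction, given a full $G$-$\otimes$-functor $(F,\alpha)$ one simply restricts to the generators. The hexagon diagram for $\alpha$ is then obtained by writing~\eqref{eq:Gfunctor} for the pairs $(a_i,a_j)$ and $(a_j,a_i)$ and using $\alpha_{a_ia_j}=\alpha_{a_ja_i}$ together with the defining formula ${i_\C}_{a_i,a_j}={c_\C}_{a_j,a_i}^{-1}\circ{c_\C}_{a_i,a_j}$ and its analogue in $\D$ to eliminate the $c$'s; the finite-order square for $\alpha$ is obtained by iterating~\eqref{eq:Gfunctor} along the pairs $(a_i^{k},a_i)$, $1\le k<n_i$, together with $\alpha_e=\id$, which we may assume after normalizing $\iota_\C=\iota_\D=\id$ by Lemma~\ref{rem:34}.

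For the substantive direction, given only $\{\alpha_{a_i}\}$ and the two finite families, I would define, for $g=a_1^{d_1}\cdots a_m^{d_m}$ in normal form, the isomorphism $\alpha_g\colon F\circ T_\C(g)\to T_\D(g)\circ F$ by transporting $F$ past the generators one at a time, in precisely the order fixed in the construction of the quasi-inverse $E$ in the proof of Theorem~\ref{thm:main0}: whisker successive copies of $\alpha_{a_i}$ by identities, inserting ${I_i}_\C$ and ${I_i}_\D$ whenever a power of a finite-order generator $a_i$ reaches $n_i$. Since the $\alpha_{a_i}$ and the structure isomorphisms ${c_\C}$, ${c_\D}$, ${I_i}_\C$, ${I_i}_\D$ are all isomorphisms of tensor functors, and composites of tensor functors are again tensor functors, each $\alpha_g$ is automatically an isomorphism of tensor functors. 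One then checks~\eqref{eq:Gfunctor} for every pair $(f,g)$ by induction on $(f,g)$ in the degree-lexicographic order on $\N^{2m}$: the base case $(a_i,a_j)$ is the hexagon diagram for $\alpha$ when $i>j$ and the finite-order diagram for $\alpha$ when $i=j$ and $a_i$ has order $2$, and is a tautology otherwise; the inductive step writes $f=a_if'$ or $g=a_jg'$ and splices the already-known instances of~\eqref{eq:Gfunctor} for smaller pairs together with the instances of the associativity diagram~\eqref{diag:assoc} for $T_\C$ and $T_\D$, which hold because $\C$ and $\D$ are $G$-$\otimes$-categories. That $R\circ E=\id$ is built into the construction, and that $E\circ R\cong\id$ (i.e.\ that this recipe recovers the original $\alpha_g$) follows from~\eqref{eq:Gfunctor} exactly as the corresponding statement for $\C_G$ does in the proof of Theorem~\ref{thm:main0}.

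A more conceptual route to the same conclusion is to form the collage category $\C\star\D$ (with objects $\Ob(\C)\sqcup\Ob(\D)$ and $\Hom_{\C\star\D}(X,Y)=\Hom_\D(F(X),Y)$ for $X\in\C$, $Y\in\D$), upgrade it to a $G$-$\otimes$-category whose functors $\widetilde T(g)$ glue $T_\C(g)$ and $T_\D(g)$ along the $\alpha_g$, observe that $G$-$\otimes$-functor structures on $F$ are precisely $G$-$\otimes$-category structures on $\C\star\D$ restricting to the given ones on $\C$ and $\D$, and apply Proposition~\ref{prop:310} to $\C\star\D$. I expect the main obstacle to be the inductive step of the substantive direction: it is the exact analogue of the lengthy case analysis in the proof of Theorem~\ref{thm:main0}, here indexed by pairs rather than triples and hence with one fewer layer, but of the same flavour ($f=a_if'$; then $g=a_jg'$, subdivided into $i>j$ versus $i\le j$; then the base pair $(a_i,a_j)$, with the extra sub-case of finite order when $i=j$). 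If one follows the collage route instead, the corresponding obstacle is verifying carefully that $\C\star\D$ genuinely carries a $G$-$\otimes$-category structure and that its restriction conditions translate exactly into the hexagon and finite-order diagrams for $\alpha$, and, separately, that the resulting $\alpha_g$ are isomorphisms — which is not automatic from Proposition~\ref{prop:310} since $\id_{F(X)}$ need not be an isomorphism in $\C\star\D$.
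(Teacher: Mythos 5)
Your primary route is exactly the paper's: the paper proves this proposition by the one-line remark that it ``can be proven as in Proposition~\ref{prop:310}'', i.e.\ by repeating the restriction/extension argument of Theorem~\ref{thm:main0} with the $\alpha_g$ in place of the $c_{f,g}$ and pairs in place of triples, which is precisely what you spell out (in considerably more detail than the paper). The collage-category alternative you sketch is not in the paper, but your main argument is correct and matches the intended proof.
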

\begin{proof} This can be proven as in Proposition~\ref{prop:310}.
\end{proof}

\begin{ex}
Let $R$ be a $G$-ring and $S$ an $R$-$G$-algebra. Then $\mod_R\to \mod_S,\ M\rightsquigarrow M\otimes_R S$ together with the functorial isomorphisms
$$\alpha_{g,M}\colon {\hs M}\otimes_R S=(M\otimes_R R)\otimes_R S\simeq M\otimes_R S\simeq (M\otimes_R S)\otimes_S S={\hs (M\otimes_R S)},\quad g\in G,$$ derived from the commutativity of
\[\xymatrix{
 R \ar[r] \ar_{g}[d] & S \ar^{g}[d] \\
 R \ar[r] & S
}
\]
is a $G$-$\otimes$-functor.
\end{ex}

The composition of $G$-$\otimes$-functors is a $G$-$\otimes$-functor in a natural way.
\begin{defi}
Let $(F,\alpha)$, $\big(F',\alpha'\big)\colon \C\to \D$ be $G$-$\otimes$-functors. A \emph{morphism of $G$-$\otimes$-functors} $(F,\alpha)\to \big(F',\alpha'\big)$ is a morphisms of $\otimes$-functors $\beta \colon F\to F'$ such that the diagram
 \begin{equation}\label{eq:morfung}
\begin{CD}
 F\circ T_\C(g) @>\beta\circ\id>> F'\circ T_\C(g)\\
 @V{\alpha_g}VV@V{\alpha'_g}VV \\
 T_\D(g)\circ F @>\id\circ\beta>>T_\D(g)\circ F'
\end{CD}
\end{equation}
commutes for all $g \in G$.
\end{defi}

\begin{lemma} \label{lemma: good def of morphism of Gtensorfunct}
Let $$G=\N^n\times \Zb/{n_1}\Zb\times\ldots\times\Zb/{n_r}\Zb,\quad n_j \geq 1,\ 1\leq j\leq r,$$ with a selected set $\{a_1,\ldots,a_m\}$, $m=n+r$, of generators corresponding to the decomposition. Let $(F,\alpha)$, $\big(F',\alpha'\big)\colon \C\to \D$ be $G$-$\otimes$-functors. Then a morphisms of $\otimes$-functors $\beta \colon F\to F'$ is a morphism of $G$-$\otimes$-functors if and only if
\begin{equation}\label{eq:morfunij}
\begin{CD}
 F\left(T(a_i)(X)\right) @>\beta_{T(a_i)(X)}>> F'\left(T(a_i)(X)\right)\\
 @V{\alpha_{a_i}}_XVV@V{\alpha'_{a_i}}_XVV \\
 T(a_i)(F(X))@>T(a_i)(\beta_X)>>T(a_i)\left(F'(X)\right)
\end{CD}
\end{equation}
commutes for every object $X$ of $\C$ and $i=1,\ldots,m$.
\end{lemma}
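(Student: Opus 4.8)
The plan is to show that the single commutative diagram~\eqref{eq:morfung} for all $g\in G$ is equivalent to the finite family~\eqref{eq:morfunij}, using the same bookkeeping as in the proof of Theorem~\ref{thm:main0} and Proposition~\ref{prop: good def of Gtensor functor}. The ``only if'' direction is immediate: for $g=a_i$ one of the generators, diagram~\eqref{eq:morfunij} evaluated at $X$ is exactly the value at $X$ of diagram~\eqref{eq:morfung} with $g=a_i$, so if $\beta$ is a morphism of $G$-$\otimes$-functors, then~\eqref{eq:morfunij} holds. The content is in the ``if'' direction: assuming~\eqref{eq:morfunij} for each generator $a_i$, recover~\eqref{eq:morfung} for an arbitrary $g\in G$.

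For the ``if'' direction I would argue by induction on the word length of $g$ written in the normal form $g = a_1^{d_1}\cdots a_m^{d_m}$ (with $d_{n+j}$ taken modulo $n_j$), exactly the normal form used to define $E$ in the proof of Theorem~\ref{thm:main0}. The base case is $g=e$, where~\eqref{eq:morfung} is a tautology since $T_\C(e)=T_\D(e)=\id$ and $\alpha_e$, $\alpha'_e$ are identities (or can be taken so by Lemma~\ref{rem:34}); the case of a single generator $g=a_i$ is the hypothesis~\eqref{eq:morfunij}. For the inductive step, write $g = a_i h$ with $h$ strictly shorter and containing none of $a_1,\dots,a_{i-1}$, so that $T_\C(g)=T_\C(a_i)\circ T_\C(h)$ and likewise over $\D$, and the structure isomorphism $\alpha_g$ is, by the construction in Proposition~\ref{prop: good def of Gtensor functor}, the composite obtained from $\alpha_{a_i}$, $\alpha_h$ and the $c$'s via diagram~\eqref{eq:Gfunctor}. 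One then pastes together: the square~\eqref{eq:morfunij} for $a_i$, applied to the object $T_\C(h)(X)$ (using that $\beta$ is natural and that $T_\D(a_i)$ is a functor, so $T_\D(a_i)$ applied to the inductive square for $h$ at $X$ is again commutative); the inductive square~\eqref{eq:morfung} for $h$; and the compatibility diagram~\eqref{eq:Gfunctor} relating $\alpha_{a_i}$, $\alpha_h$, $\alpha_g$ for both $(F,\alpha)$ and $(F',\alpha')$, together with the naturality of $\beta$ with respect to the tensor-functor structure. Chasing these commuting cells yields~\eqref{eq:morfung} for $g$. As in Theorem~\ref{thm:main0}, one must separately handle the finite-order generators $a_{n+j}$ using the diagrams~\eqref{diag:commfin} and~\eqref{eq:finordercond}, but since~\eqref{eq:morfunij} is already imposed for those generators as well, the same pasting argument applies once one notes that passing through a full cycle $T(a_{n+j})^{\circ n_j}\to\id$ collapses consistently on both sides because $\beta$ is compatible with $I_j$ and $I_j'$.

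The main obstacle I expect is not any single diagram chase but the organization: one has to be careful that the chosen normal form and the chosen order of ``bubble-sort'' exchanges used to build $\alpha_g$ match the ones fixed in the proof of Theorem~\ref{thm:main0}, so that the inductive hypothesis is literally applicable and the compatibility~\eqref{eq:Gfunctor} can be invoked in the precise shape needed. In practice this is exactly the level of detail the authors have already carried out for $c_{f,g}$ in Theorem~\ref{thm:main0}, so the cleanest write-up is to say that the argument is formally identical to (and uses) that proof, with $c$ replaced by $\alpha$ and~\eqref{diag:assoc} replaced by~\eqref{eq:Gfunctor}; indeed this is presumably the intent of the one-line proof ``This can be proven as in Proposition~\ref{prop:310}'' style that the paper uses for the analogous statements. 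I would therefore present the ``only if'' direction in a sentence and then reduce the ``if'' direction to the inductive machinery of Theorem~\ref{thm:main0}, spelling out only the generator-exchange step and the finite-order step as the two cases where~\eqref{eq:Gfunctor}, \eqref{eq:finordercond} enter.
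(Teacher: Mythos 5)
Your proposal is correct and follows essentially the same route as the paper: the ``only if'' direction is the trivial specialization $g=a_i$, and the ``if'' direction reduces a general $g=a_1^{d_1}\cdots a_m^{d_m}$ to the generators by pasting the squares~\eqref{eq:morfunij} along the normal form, using naturality of $\beta$ and iterated applications of the compatibility~\eqref{eq:Gfunctor} between the $\alpha$'s and the $c$'s. The paper writes this as a single composite commutative diagram rather than a formal induction on word length, but the content is identical.
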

\begin{proof}
For all $g = a_1^{d_1}\cdot\ldots\cdot a_m^{d_m}$ and $X \in \Ob(\C)$, since $\beta$ is a morphism of functors $F\to F'$ and by~\eqref{eq:morfunij}, the following diagram is commutative:
$$
\begin{CD}
F(T(g)(X))@>F(c_X)>> F\big(T(a_1)^{\circ d_1}\circ\ldots\circ T(a_m)^{\circ d_m}(X)\big)@>\alpha_X>>T(a_1)^{\circ d_1}\circ\ldots\circ T(a_m)^{\circ d_m}F(X)\\
@V\beta_{T(g)(X)}VV@V\beta_{T(a_1)^{\circ d_1}\circ\ldots\circ T(a_m)^{\circ d_m}(X)}VV@VT(a_1)^{\circ d_1}\circ\ldots\circ T(a_m)^{\circ d_m}(\beta_X)VV\\
F'(T(g)(X))@>F'(c_X)>> F'\big(T(a_1)^{\circ d_1}\circ\ldots\circ T(a_m)^{\circ d_m}(X)\big)@>\alpha'_X>>T(a_1)^{\circ d_1}\circ\ldots\circ T(a_m)^{\circ d_m}F'(X)
\end{CD}
$$
where $c$ is the appropriate isomorphism of functors $T(g) \to T(a_1)^{\circ d_1}\circ\ldots\circ T(a_m)^{\circ d_m}$ obtained as a composition of various $c_{.,.}$; similarly for $\alpha_X$ and $\alpha'_X$. Commutativity of~\eqref{eq:morfunij} now follows from an iterative application of~\eqref{eq:Gfunctor}.
\end{proof}

\subsection{Semigroup actions on Tannakian categories}\label{sec:semigrouptannakian}

\begin{defi}\label{def:nGTcat}
Let $k$ be a $G$-field. A \emph{neutral $G$-Tannakian category} over $k$ is a $G$-$\otimes$-category $\C$ over $k$ that is rigid (as a tensor category) and such that there exists a \emph{$G$-fibre functor} $\C\to\vect_k$, i.e., a $G$-$\otimes$-functor $(F,\alpha)$ with $F$ exact, faithful and $k$-linear.
\end{defi}

\begin{rem}\label{rem:rigidexact}
If $F\colon\C\to\D$ is a right-exact tensor functor between abelian tensor categories and $\C$ is rigid, then $F$ is exact \cite[Lemma~2.2.3]{Stalder:ScalarExtensionofAbelianandTannakianCategories}. Therefore, for all $g\in G$, $T(g)\colon \C\to\C$ is exact for every neutral $G$-Tannakian category $\C$.
\end{rem}

\begin{ex} \label{ex: repG is sTannakian}
Let $k$ be a $G$-field and $H$ a group $k$-$G$-scheme. The category $\rep(H)$ of representations of $H$ is a neutral $G$-Tannakian category over $k$ in a natural way:
\begin{itemize}
\item The tensor product and dual are as described in Section~\ref{sec:constructions}.
\item The right-exact tensor functors
$$T(g)\colon \rep(H)\to \rep(H),\ V\rightsquigarrow{\hs V},\quad g\in G,$$ are also described in Section \ref{subsec: Basic constructions}. 
\item For all $g$, $h \in G$, and $V \in\Ob(\rep(H))$, $$c_{g,h} : T(g)T(h)(V) \to T(gh)(V),\quad v\otimes r\otimes s \mapsto v\otimes g(r)s,\quad v\in V,\ r,s \in k.$$
\item The $G$-$\otimes$-functor $$\omega\colon \rep(H)\to \vect_k$$ that forgets the action of $H$ is a $G$-fibre functor for $\rep(H)$.
\end{itemize}
\end{ex}

Theorem \ref{theo: sTannakianmain} below asserts that the above example is ``essentially'' the only example of a neutral $G$-Tannakian category. However, there are natural examples of neutral $G$-Tannakian categories for which the determination of the corresponding group $G$-scheme is a highly non--trivial problem:

\begin{ex}\label{ex:diffmod} We will describe the $G$-$\otimes$-category of {\it differential modules}. Let $K$ be a $G$-field and a $\partial$-field (that is, $\partial : K\to K$ is a derivation) such that, for all $g \in G$, $T(g) : K\to K$ commutes with $\partial$. 
As in~\cite[Definition~1.6 and~Section~2.2]{SingerPut:differential}, 
\begin{itemize}
\item 
the objects are finite-dimensional $K$-vector spaces $M$ with an additive map $\partial :M\to M$ satisfying $\partial(am) = \partial(a)m+a\partial(m)$, $a \in K$, $m\in M$; 
\item the morphisms are $K$-linear maps that commute with $\partial$; the tensor structure is as in the vector spaces, with $\partial(m\otimes n)=\partial(m)\otimes n + m\otimes\partial(n)$, $m \in M$, $n \in N$. 
\item The $G$-action is given as in Example~\ref{ex:GModR}.
\end{itemize}
Similarly to \cite{SingerPut:differential}, this is a $G$-$\otimes$-category over the $G$-field $K^\partial=\{a\in K\:|\: \partial(a) = 0\}\cong\End(\1)$.
\end{ex}

Let $k$ be a $G$-field, $\C$ a neutral $G$-Tannakian category over $k$ and $\omega\colon\C\to\vect_k$ a $G$-fibre functor. For every $k$-$G$-algebra $R$, composing $\omega$ with the $G$-$\otimes$-functor $$\vect_k\to\mod_R,\ V\rightsquigarrow V\otimes_kR,$$ yields a $G$-$\otimes$-functor
$$\omega\otimes R\colon\C\to\mod_R.$$ Let $\underline{\operatorname{Aut}}^{G,\otimes}(\omega)(R)$ denote the group of all automorphisms of $\omega\otimes R$ (i.e., invertible morphisms $\omega\otimes R\to\omega\otimes R$ of $G$-$\otimes$-functors). Then
$\underline{\operatorname{Aut}}^{G,\otimes}(\omega)$ is naturally a functor from $k$-$G$-$\alg$ to $\groups$.

If $\C=\rep(H)$ and $\omega$ are as in Example~\ref{ex: repG is sTannakian}, we have a canonical morphism $$H\to\underline{\operatorname{Aut}}^{G,\otimes}(\omega)$$ of group functors on $k$-$G$-$\alg$. (The statement that $h\in H(R)$, when considered as a morphism of functors $h\colon\omega\otimes R\to \omega\otimes R$, respects $G$ is precisely identity (\ref{eqn: RepG is scategory}).)

For a $k$-$G$-algebra $R$ let $R^\sharp$ denote the $k$-algebra obtained from $R$ by forgetting the $G$-action. Similarly, for a group $k$-$G$-scheme $H$, let $H^\sharp$ denote the group scheme obtained from $H$, by forgetting the $G$-action, i.e., $H^\sharp $ is the affine group scheme represented by the Hopf algebra $k\{H\}^\sharp$.
\begin{prop} \label{prop: stannakaconsistent}
Let $k$ be a $G$-field, $H$ a group $k$-$G$-scheme, and $\omega\colon \rep(H)\to \vect_k$ the forgetful $G$-$\otimes$-functor. Then the canonical morphism $$H\to \underline{\operatorname{Aut}}^{G,\otimes}(\omega)$$ is an isomorphism.
\end{prop}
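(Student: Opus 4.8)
The plan is to reduce the statement to the classical Tannaka duality theorem for the underlying group scheme $H^\sharp$ and then to check that the extra compatibility imposed by the $G$-action is exactly what singles out $H$ inside $H^\sharp$. Fix a $k$-$G$-algebra $R$. Forgetting the $G$-action, $\rep(H)$ is the neutral Tannakian category $\rep(H^\sharp)$ over $k$ and $\omega$ is its forgetful fibre functor, so Theorem~\ref{theo: Tannakianmain} applied to $H^\sharp$ shows that the canonical map $H^\sharp\to\underline{\operatorname{Aut}}^\otimes(\omega)$ of group schemes over $k$ is an isomorphism; in particular, for any $k$-algebra $S$ an invertible morphism of tensor functors $\omega\otimes S\to\omega\otimes S$ ``is'' a $k$-algebra homomorphism $k\{H\}^\sharp\to S$, acting on a representation $V$ with comodule matrix $(x_{ij})$ (for a chosen basis) by the matrix $\big(h(x_{ij})\big)$. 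Since a morphism of $G$-$\otimes$-functors is in particular a morphism of $\otimes$-functors, $\underline{\operatorname{Aut}}^{G,\otimes}(\omega)(R)$ is a subset of $\underline{\operatorname{Aut}}^\otimes(\omega)(R^\sharp)=H^\sharp(R^\sharp)$, and the composite $H(R)\to\underline{\operatorname{Aut}}^{G,\otimes}(\omega)(R)\hookrightarrow H^\sharp(R^\sharp)$ is simply the inclusion $H(R)\subseteq H^\sharp(R^\sharp)$ of the $k$-$G$-algebra homomorphisms among all $k$-algebra homomorphisms $k\{H\}\to R$. In particular the canonical map $H(R)\to\underline{\operatorname{Aut}}^{G,\otimes}(\omega)(R)$ is injective, and it remains to prove that it is surjective, i.e., that every $G$-compatible tensor automorphism of $\omega\otimes R$ arises from a point of $H(R)$.

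For surjectivity, let $\lambda\in\underline{\operatorname{Aut}}^{G,\otimes}(\omega)(R)$ and let $h\colon k\{H\}^\sharp\to R^\sharp$ be the corresponding $k$-algebra homomorphism; the claim is that $h$ is a morphism of $k$-$G$-algebras, i.e., $h\circ T_{k\{H\}}(g)=T_R(g)\circ h$ for every $g\in G$, for then $h\in H(R)$ and $\lambda$ is its image under the canonical map. To see this one unwinds the defining condition~\eqref{eq:morfung} of a morphism of $G$-$\otimes$-functors for $\omega\otimes R$. For the forgetful functor $\omega$ the structure isomorphism $\alpha_g$ is the identity on underlying $k$-vector spaces (the space underlying ${}^gV$ does not depend on whether one applies $\omega$ or $T(g)$ first), so, after composing with the base-change data of $\vect_k\to\mod_R$, diagram~\eqref{eq:morfung} evaluated at $V$ says precisely that $\lambda_{{}^gV}$ is the base extension along $g\colon R\to R$ of $\lambda_V$ --- this is exactly identity~\eqref{eqn: RepG is scategory}. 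Combining this with the description of the comodule structure of ${}^gV$ from Section~\ref{subsec:action}, namely $g(\rho)=(\id\otimes R_g^*)\circ{}^g\rho$ with $R_g^*(a\otimes b)=T_{k\{H\}}(g)(a)\cdot b$, so that ${}^gV$ has comodule matrix $\big(T_{k\{H\}}(g)(x_{ij})\big)$, a direct comparison of matrices gives $T_R(g)\big(h(x_{ij})\big)=h\big(T_{k\{H\}}(g)(x_{ij})\big)$ for all $i,j$. As $V$ runs over all representations of $H$, the matrix coefficients $x_{ij}$ span $k\{H\}$ as a $k$-vector space (each finite-dimensional subcomodule of $k\{H\}$ consists of matrix coefficients, by the counit axiom), and both $h\circ T_{k\{H\}}(g)$ and $T_R(g)\circ h$ are $T_k(g)$-semilinear additive maps $k\{H\}\to R$; hence they agree everywhere, which is the desired compatibility. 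Since $R$ was arbitrary, $H\to\underline{\operatorname{Aut}}^{G,\otimes}(\omega)$ is an isomorphism of functors on $k$-$G$-$\alg$.

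The main obstacle is the bookkeeping in the surjectivity step: one must keep straight the several distinct operators denoted ``$g$'' (on $k$, on $k\{H\}$, and on $R$), the base-change isomorphisms hidden in the construction of $\omega\otimes R$ as a $G$-$\otimes$-functor, and the identification of ${}^gV$ with $V\otimes_k k$ via $g\colon k\to k$, so that the abstract naturality square~\eqref{eq:morfung} really does collapse to the concrete statement ``$h$ commutes with the difference operators''. Once~\eqref{eq:morfung} has been translated correctly into an equality of morphisms of functors applied to comodule matrices, the remaining input --- that matrix coefficients span the Hopf algebra and that semilinear maps agreeing on a spanning set coincide --- is routine, as is the verification (already indicated in the text preceding the statement, via~\eqref{eqn: RepG is scategory}) that the canonical morphism is well defined in the first place.
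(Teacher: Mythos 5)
Your proof is correct and follows essentially the same route as the paper: reduce to classical Tannaka duality to identify $\underline{\operatorname{Aut}}^{\otimes}(\omega)(R)$ with $H^\sharp(R^\sharp)$ (the paper cites \cite[Proposition~2.8]{DeligneMilne:TannakianCategories} for this rather than Theorem~\ref{theo: Tannakianmain}, which is the more precise reference), and then show that the $G$-compatibility of the tensor automorphism forces the corresponding algebra map $h\colon k\{H\}\to R$ to commute with the difference operators. Your extraction of $h\circ g=g\circ h$ via comodule matrices and the spanning of $k\{H\}$ by matrix coefficients is the same computation the paper carries out in Sweedler notation followed by an application of the counit.
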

\begin{proof}
Let $R$ be a $k$-$G$-algebra. By forgetting the $G$-structure, we can interpret $\omega$ as a fibre functor for a Tannakian category. Then \cite[Proposition~2.8]{DeligneMilne:TannakianCategories} says that the natural map $$H^\sharp\big(R^\sharp\big)\to \underline{\operatorname{Aut}}^{\otimes}(\omega)(R)$$ is bijective. It therefore suffices to see that, under this bijection, $H(R)\subset H^\sharp(R^\sharp)$ corresponds to
$$\underline{\operatorname{Aut}}^{G,\otimes}(\omega)(R)\subset \underline{\operatorname{Aut}}^{\otimes}(\omega)(R).$$
Thus, we have to show that, for an isomorphism of $G$-$\otimes$-functors $\beta\colon\omega\otimes R\to\omega\otimes R$, the corresponding morphism $$h\in\Hom_k\big(k\{H\}^\sharp,R^\sharp\big)=H^\sharp\big(R^\sharp\big)$$ is a morphism of difference rings. Let $\varphi\in k\{H\}$. We have to show that $$h(g(\varphi))=g(h(\varphi)),\quad g \in G.$$
Using Sweedler's notation, we may write 
\begin{equation}\label{eq:coaction}
\Delta(\varphi)=\sum \varphi_{(1)}\otimes \varphi_{(2)}\in V\otimes_k k\{H\}.
\end{equation}
Then $V := \Span_k\big\{\varphi_{(1)}\big\}$ is a finite-dimensional $H$-stable $k$-subspace of $k\{H\}$ containing $\varphi$, as $(\id\otimes\varepsilon)\circ\Delta(\varphi)=1\otimes \varphi$. By assumption, for all $g\in G$,
\begin{equation} \label{eqn: betasfunctor}
\xymatrix{
{\hs V}\otimes_k R \ar^{\beta_{{\hs V}}}[r] \ar_{\simeq}[d] & {\hs V}\otimes_k R \ar^{\simeq}[d] \\
{\hs(V\otimes_k R)} \ar^{g(\beta_V)}[r] & {\hs(V\otimes_k R)}
}
\end{equation}
commutes. By~\eqref{eq:coaction}, $$\beta_V(\varphi\otimes 1)=h(\varphi\otimes 1)=\sum \varphi_{(1)}\otimes h(\varphi_{(2)})\in V\otimes_k R.$$ Chasing $(\varphi\otimes 1)\otimes 1\in {\hs V}\otimes_k R$ through diagram (\ref{eqn: betasfunctor}), we see  that
$$\sum \varphi_{(1)}\otimes h(g(\varphi_{(2)}))=\sum \varphi_{(1)}\otimes g(h(\varphi_{(2)}))\in V\otimes_k R,$$ where the latter tensor product is formed by using $k\xrightarrow{g}k\to R$ on the right-hand side. Applying the counit $\varepsilon\colon k\{H\}\to k$ to this identity, we conclude that
$$\sum g(\varepsilon(\varphi_{(1)}))h(g(\varphi_{(2)}))=h\big(\sum g(\varepsilon(\varphi_{(1)})g(\varphi_{(2)}))\big)=h(g(\varphi))$$ and
$$\sum g(\varepsilon(\varphi_{(1)}))g(h(\varphi_{(2)}))=g\big(\sum\varepsilon(\varphi_{(1)})h(\varphi_{(2)})\big)=g(h(\varphi))$$ are equal. So, as claimed, $h$ is a morphism of $G$-rings.
\end{proof}

\begin{theo} \label{theo: sTannakianmain}
Let $k$ be a $G$-field and $(\C,\omega)$ a neutral $G$-Tannakian category over $k$. Then $H=\underline{\operatorname{Aut}}^{G,\otimes}(\omega)$ is a group $k$-$G$-scheme, and $\omega$ induces an equivalence of $G$-$\otimes$-categories over $k$ between $\C$ and $\rep(H)$.
\end{theo}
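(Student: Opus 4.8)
The plan is to deduce the theorem from the classical Tannaka duality of Theorem~\ref{theo: Tannakianmain} by reading a $G$-structure off the data $(T(g),\alpha_g)$ of the given $G$-fibre functor, transporting everything to $\rep(H)$, and then invoking Proposition~\ref{prop: stannakaconsistent}, which already settles the $\rep(H)$ case. Throughout, $G$ is the arbitrary semigroup fixed at the outset, so no finiteness reduction is used.

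First, forget the $G$-action. Then $\C$ is an ordinary neutral Tannakian category over $k$ with fibre functor $\omega$ (each $T(g)$ is exact by Remark~\ref{rem:rigidexact}, though this is not needed here), so Theorem~\ref{theo: Tannakianmain} gives an affine group scheme $H^\sharp:=\underline{\operatorname{Aut}}^{\otimes}(\omega)$ over $k$ and a tensor equivalence $\omega^{\mathrm{eq}}\colon\C\xrightarrow{\ \sim\ }\rep(H^\sharp)$ with $\omega=\omega_0\circ\omega^{\mathrm{eq}}$, where $\omega_0$ is the forgetful functor. Next, I put a $G$-structure on $H^\sharp$. For $g\in G$, let $g^*\omega$ be the composite of $\omega$ with base extension along $g\colon k\to k$; as a field endomorphism is injective, $k$ is a field extension of itself via $g$, so $g^*\omega\colon\C\to\vect_k$ is again a fibre functor, and unwinding the definitions identifies $\underline{\operatorname{Aut}}^{\otimes}(g^*\omega)$ with the base extension $\hs{(H^\sharp)}:=\spec\!\big(k\{H^\sharp\}\otimes_{k}k\big)$ of $H^\sharp$ along $g$. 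The $G$-fibre functor supplies an isomorphism of tensor functors $\alpha_g\colon\omega\circ T(g)\xrightarrow{\ \sim\ }g^*\omega$; feeding $(T(g),\alpha_g)$ into Corollary~\ref{cor: morphism of groups corresponds to functor} with $\C'=\C$ and $\omega'=g^*\omega$ produces a morphism of group $k$-schemes $H^\sharp\to\hs{(H^\sharp)}$, equivalently a ring endomorphism $g\colon k\{H^\sharp\}\to k\{H^\sharp\}$ lying over $g\colon k\to k$ and compatible with comultiplication, counit and antipode. The cocycle condition \eqref{eq:Gfunctor} for the $G$-fibre functor, together with the compatibility of the bijection of Corollary~\ref{cor: morphism of groups corresponds to functor} with composition (and with $\iota$ for $g=e$), forces $g\mapsto\bigl(g\colon k\{H^\sharp\}\to k\{H^\sharp\}\bigr)$ to be a homomorphism $G\to\End(k\{H^\sharp\})$ sending $e$ to $\id$. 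Hence $k\{H\}:=k\{H^\sharp\}$ with this action is a $k$-$G$-Hopf algebra; let $H$ be the corresponding group $k$-$G$-scheme.

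Then I upgrade the equivalence. The vector-space isomorphisms $\alpha_{g,X}\colon\omega(T(g)X)\xrightarrow{\ \sim\ }g^*\omega(X)=\hs{(\omega X)}$ are, by the very construction of the morphism $H^\sharp\to\hs{(H^\sharp)}$ and of the functor $V\rightsquigarrow\hs V$ on $\rep(H)$ recalled in Section~\ref{subsec: Basic constructions}, isomorphisms of representations of $H$; they are natural in $X$, compatible with tensor products, and satisfy \eqref{eq:Gfunctor} because the $G$-fibre-functor data does. Thus $\omega^{\mathrm{eq}}$ acquires the structure of a $G$-$\otimes$-functor whose underlying functor is an equivalence of tensor categories; a quasi-inverse then carries a canonical $G$-$\otimes$-structure, so $\omega^{\mathrm{eq}}$ is an equivalence of $G$-$\otimes$-categories over $k$ between $\C$ and $\rep(H)$, under which $\omega$ corresponds to $\omega_0$. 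Finally, since precomposition with a $G$-$\otimes$-equivalence is an equivalence on the relevant functor categories, $\underline{\operatorname{Aut}}^{G,\otimes}(\omega)\cong\underline{\operatorname{Aut}}^{G,\otimes}(\omega_0)$, and the latter is $H$ by Proposition~\ref{prop: stannakaconsistent}. In particular $\underline{\operatorname{Aut}}^{G,\otimes}(\omega)$ is representable by the group $k$-$G$-scheme $H$, and the induced $G$-$\otimes$-equivalence $\C\simeq\rep(H)$ has been produced, completing the argument.

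The conceptual input (Theorems~\ref{theo: Tannakianmain}, Corollary~\ref{cor: morphism of groups corresponds to functor}, Proposition~\ref{prop: stannakaconsistent}) is entirely in place, so I expect the main obstacle to be bookkeeping: identifying $\underline{\operatorname{Aut}}^{\otimes}(g^*\omega)$ with $\hs{(H^\sharp)}$, manufacturing the semilinear endomorphisms of $k\{H\}$ and checking that they assemble into a $G$-action compatible with the Hopf structure, and — the most delicate point — verifying that $T(g)$ on $\rep(H)$ matches $T(g)$ on $\C$ under $\omega^{\mathrm{eq}}$, i.e.\ keeping the semilinearity directions and all coherence isomorphisms straight throughout.
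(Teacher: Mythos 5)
Your proposal is correct, and its skeleton coincides with the paper's: forget $G$ to get $H^\sharp=\underline{\operatorname{Aut}}^{\otimes}(\omega)$ via Theorem~\ref{theo: Tannakianmain}, use the data $(T(g),\alpha_g)$ and Corollary~\ref{cor: morphism of groups corresponds to functor} to manufacture semilinear Hopf-algebra endomorphisms $M_g$ of $k\{H^\sharp\}$, check that $g\mapsto M_g$ is a semigroup homomorphism using \eqref{eq:Gfunctor} and the coherence of the $c_{f,g}$, transport the equivalence, and finish with Proposition~\ref{prop: stannakaconsistent}. The one genuine divergence is how you make Corollary~\ref{cor: morphism of groups corresponds to functor} applicable despite the semilinearity of $T(g)$. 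The paper factors $T(g)=\widetilde{T(g)}\circ T_g$ through Stalder's scalar extension of categories ${\hs(\C^\sharp)}=\C^\sharp\otimes_k k$ (Theorem~\ref{theo: scalar extension for categories}), so that $\widetilde{T(g)}$ is an honestly $k$-linear tensor functor between neutral Tannakian categories over $k$, and Corollary~\ref{cor: baseextension for omega} identifies $\underline{\operatorname{Aut}}^{\otimes}\big({\hs(\omega^\sharp)}\big)$ with ${\hs(H^\sharp)}$. You instead stay inside $\C$ and work with $g^*\omega=(-\otimes_{k,g}k)\circ\omega$, identifying $\underline{\operatorname{Aut}}^{\otimes}(g^*\omega)$ with ${\hs(H^\sharp)}$ by hand; this is correct (on points, both functors send $R$ to $H^\sharp(R')$, where $R'$ is $R$ viewed as a $k$-algebra through $g$) and spares you Theorem~\ref{theo: scalar extension for categories} and Corollary~\ref{cor: baseextension for omega} entirely. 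The price is a bookkeeping point you should make explicit: $g^*\omega$ is only $g$-semilinear for the given $k$-structure on $\C$, so to quote Corollary~\ref{cor: morphism of groups corresponds to functor} verbatim you must regard its source as the neutral Tannakian category over $k$ obtained from $\C$ by twisting $\End(\1)\cong k$ along $g$ (equivalently, invoke the version of \cite[Corollary~2.9]{DeligneMilne:TannakianCategories} for fibre functors valued in modules over a $k$-algebra); this semilinearity is precisely what the scalar-extension detour is designed to absorb. The same care is needed in your cocycle step, where $m_h$ must be base-changed along $g$ before composing with $m_g$ --- that is the content of the paper's diagram chase establishing \eqref{eq:Mgh}, which you assert rather than carry out, but which does follow from \eqref{eq:Gfunctor} as you claim. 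With those points spelled out, your argument is complete and somewhat lighter than the paper's.
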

In the proof of Theorem \ref{theo: sTannakianmain}, we will use extension of scalars for categories. All we need to know about this concept is contained in \cite{Stalder:ScalarExtensionofAbelianandTannakianCategories}; see also \cite[Section~4.1]{GGO} and the references given there.

\begin{theo}[{\cite[Theorem~1.4.1]{Stalder:ScalarExtensionofAbelianandTannakianCategories}}] \label{theo: scalar extension for categories}
Let $K|k$ be a field extension and $(\C,\omega)$ a neutral Tannakian category over $k$. Then there exists a neutral Tannakian category $\big(\C\otimes_k K,\omega_K\big)$ over $K$, together with an exact $k$-linear tensor functor $T_K\colon\C\to \C\otimes_k K$ satisfying the following universal property:

If $\D$ is an abelian tensor category over $K$ and $F\colon\C\to\D$ an exact $k$-linear tensor functor, then there exists an exact $K$-linear tensor functor $E \colon\C\otimes_k K\to \D$ and an isomorphism of tensor functors $\alpha\colon ET_K\simeq F $. If $(E',\alpha')$ is another such pair, there exists a unique isomorphism of tensor functors $E\simeq E'$ such that
\[\xymatrix{
 ET_K \ar[rr] \ar_{\alpha}[rd] & & E'T_K \ar^{\alpha'}[ld] \\
  & F &
}
\]
commutes.
\end{theo}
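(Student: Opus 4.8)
The plan is to reduce to the representation-theoretic picture via Theorem~\ref{theo: Tannakianmain} and then construct the scalar extension explicitly as the representation category of the base-changed group scheme. First I would invoke Theorem~\ref{theo: Tannakianmain} to identify $\C$ with $\rep(H)$, where $H=\underline{\operatorname{Aut}}^\otimes(\omega)$ is an affine group scheme over $k$; write $A=k\{H\}$ for its coordinate Hopf algebra. Let $H_K$ denote the affine group scheme over $K$ obtained by base change (so its coordinate ring is $A\otimes_k K=:A_K$), set $\C\otimes_k K:=\rep(H_K)$, let $\omega_K\colon\rep(H_K)\to\vect_K$ be the forgetful fibre functor, and define $T_K\colon\C\to\C\otimes_k K$ by $V\rightsquigarrow V\otimes_k K$ with the induced $A_K$-comodule structure. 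Then $(\C\otimes_k K,\omega_K)$ is a neutral Tannakian category over $K$ (the representation category of an affine group scheme), and $T_K$ is an exact $k$-linear tensor functor, since $K$ is free over $k$ and base change commutes with tensor products and duals of comodules; moreover $\omega_K\circ T_K$ is canonically the composite of $\omega$ with the extension of scalars $\vect_k\to\vect_K$.

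The key structural input is that $\rep(H_K)$ is controlled by the image of $T_K$. I would first record that for finite-dimensional $V,V'\in\rep(H)$ the natural map $\Hom_H(V,V')\otimes_k K\to\Hom_{H_K}(T_K(V),T_K(V'))$ is an isomorphism (flat base change of comodule $\Hom$'s, using finite-dimensionality to commute with the relevant equalizer). Next, since $A_K=\varinjlim (V_i\otimes_k K)$ over the finite-dimensional subcomodules $V_i\subseteq A$, every object $W$ of $\rep(H_K)$ embeds into a cofree comodule $A_K^{\,n}$ and hence, being finite-dimensional over $K$, into $T_K(V)$ for a suitable $V\in\rep(H)$; applying the same to the cokernel yields an exact copresentation
\[
0\longrightarrow W\longrightarrow T_K(V)\longrightarrow T_K(V')
\]
with both right-hand terms in the essential image of $T_K$.

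Granting these two facts, I would construct $E$ as follows. On the image of $T_K$, declare $E(T_K(V)):=F(V)$; a morphism $T_K(V)\to T_K(V')$ is, by the $\Hom$-computation above, of the form $\sum_i f_i\otimes\lambda_i$ with $f_i\in\Hom_H(V,V')$ and $\lambda_i\in K$, and I send it to $\sum_i\lambda_i\cdot F(f_i)$, where $\lambda_i\in K=\End(\1_\D)$ acts through the $K$-linear structure of $\D$. This is a $K$-linear tensor functor on the full subcategory of $T_K$-images with $ET_K\cong F$. I would then extend $E$ to all of $\C\otimes_k K$ by setting $E(W):=\ker\big(F(V)\to F(V')\big)$ for a chosen copresentation of $W$, using exactness of $F$ to match this with the value forced on objects already in the image, and show the result is independent of the copresentation up to canonical isomorphism, functorial in $W$, exact, $K$-linear, and compatible with $\otimes$ (the last via the copresentation of $W\otimes W'$ obtained by tensoring those of $W$ and $W'$, together with the tensor structure of $F$ and exactness of $\otimes$ in the rigid category).

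Uniqueness is the cleaner half: given another pair $\big(E',\alpha'\big)$, the copresentation $0\to W\to T_K(V)\to T_K(V')$ together with exactness of $E$ and $E'$ forces $E(W)\cong\ker\big(F(V)\to F(V')\big)\cong E'(W)$ canonically and compatibly with $T_K$, yielding the required unique isomorphism $E\simeq E'$. The main obstacle is the existence half, specifically proving that the copresentation formula for $E$ is well defined and functorial and that it upgrades to a genuine \emph{tensor} functor while remaining $K$-linear: this is precisely where the passage from the merely $k$-linear $F$ to a $K$-linear functor takes place, and it must be carried out for an arbitrary (not necessarily neutral) abelian tensor category $\D$ over $K$, so one cannot simply reduce to a morphism of group schemes via Corollary~\ref{cor: morphism of groups corresponds to functor}.
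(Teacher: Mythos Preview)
The paper does not give a proof of this statement: the theorem is quoted verbatim from \cite[Theorem~1.4.1]{Stalder:ScalarExtensionofAbelianandTannakianCategories} (the citation is in the theorem header itself), and it is used only as a black box in the proof of Theorem~\ref{theo: sTannakianmain}. So there is no ``paper's own proof'' to compare your argument against.

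That said, your approach is a legitimate route to the \emph{neutral} case stated here, and it differs from Stalder's. Stalder constructs $\C\otimes_k K$ intrinsically at the level of abelian (tensor) categories---roughly, as the category of $K$-module objects in the ind-category of $\C$---without ever invoking a fibre functor or Theorem~\ref{theo: Tannakianmain}. The payoff of that intrinsic construction is that it applies to non-neutral Tannakian categories (and indeed to more general abelian tensor categories), and the universal property is proved once and for all at that level of generality. Your approach trades generality for concreteness: by first identifying $\C\simeq\rep(H)$ and then putting $\C\otimes_k K:=\rep(H_K)$, you get the existence of the scalar extension and the fibre functor $\omega_K$ essentially for free, and the $\Hom$-isomorphism and copresentation lemma are standard comodule facts. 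The genuine work, as you correctly flag, is the extension of $E$ off the essential image of $T_K$ via copresentations and the verification that the result is a well-defined $K$-linear \emph{tensor} functor for an arbitrary abelian tensor category $\D$ over $K$; this step is real but routine once the two structural lemmas are in place. For the limited purposes of the present paper (only the neutral case is ever used), your argument would suffice.
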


\begin{rem}
In the situation of Theorem \ref{theo: scalar extension for categories}, if $\omega\colon\C\to\vect_k$ is a fibre functor, then the universal property applied to $\omega\otimes K\colon\C\to \vect_K$ yields a fibre functor $$\omega_K\colon \C\otimes_k K\to \vect_K.$$ The functor $\omega_K$ is faithful since any non--zero exact tensor functor on a rigid abelian tensor category is faithful  \cite[Lemma 2.2.4]{Stalder:ScalarExtensionofAbelianandTannakianCategories}, \cite[Proposition~1.19]{DeligneMilne:TannakianCategories}.
\end{rem}
\begin{cor} \label{cor: baseextension for omega}
The affine group $K$-scheme $\underline{\operatorname{Aut}}^{\otimes}(\omega_K)$ is obtained from the affine group $k$-scheme $\underline{\operatorname{Aut}}^{\otimes}(\omega)$ by base extension from $k$ to $K$.
\end{cor}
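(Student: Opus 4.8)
The plan is to deduce this from the universal property of $\C\otimes_k K$ in Theorem~\ref{theo: scalar extension for categories}, using it to identify the functor of points of $\underline{\operatorname{Aut}}^{\otimes}(\omega_K)$ with that of the base change $H_K:=H\times_k K$, where $H:=\underline{\operatorname{Aut}}^{\otimes}(\omega)$. First note that $\underline{\operatorname{Aut}}^{\otimes}(\omega_K)$ is an affine group $K$-scheme by Theorem~\ref{theo: Tannakianmain} applied to the neutral Tannakian category $(\C\otimes_k K,\omega_K)$ over $K$ (from Theorem~\ref{theo: scalar extension for categories} and the Remark following it), and $H_K$ is one by base change; so it suffices to produce an isomorphism of group-valued functors on $K$-algebras. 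For a $K$-algebra $R$ one has $H_K(R)=H(R)$, viewing $R$ as a $k$-algebra via $k\to K\to R$, so $H_K(R)$ is the group $\operatorname{Aut}^{\otimes}(\omega\otimes_k R)$ of automorphisms of the tensor functor $\omega\otimes_k R\colon\C\to\mod_R$.

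The key step is to characterise the functor $\omega_K\otimes_K R\colon\C\otimes_k K\to\mod_R$ by a universal property. I would apply the universal property of Theorem~\ref{theo: scalar extension for categories} to the $K$-linear abelian tensor category $\D=\mod_R$ and the exact $k$-linear tensor functor $F=\omega\otimes_k R\colon\C\to\mod_R$: it yields an exact $K$-linear tensor functor $E\colon\C\otimes_k K\to\mod_R$ together with an isomorphism of tensor functors $\alpha\colon ET_K\xrightarrow{\sim}\omega\otimes_k R$, and $(E,\alpha)$ is unique up to a unique compatible isomorphism. On the other hand, whiskering the canonical isomorphism $\omega_KT_K\simeq\omega\otimes_k K$ from the Remark after Theorem~\ref{theo: scalar extension for categories} with $-\otimes_KR\colon\vect_K\to\mod_R$ gives a canonical isomorphism $(\omega_K\otimes_KR)T_K\simeq(\omega\otimes_kK)\otimes_KR=\omega\otimes_kR$, natural in $R$. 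Thus $\omega_K\otimes_KR$, equipped with this isomorphism, satisfies the same universal property, and the uniqueness clause provides a canonical isomorphism $E\simeq\omega_K\otimes_KR$ compatible with the two maps down to $\omega\otimes_kR$. Consequently, whiskering with $T_K$ (followed by conjugation by $\alpha$) is a group isomorphism $\operatorname{Aut}^{\otimes}(\omega_K\otimes_KR)\xrightarrow{\sim}\operatorname{Aut}^{\otimes}(\omega\otimes_kR)$: its surjectivity and injectivity are precisely the existence and uniqueness halves of the universal property, applied to the pairs $(\omega_K\otimes_KR,\delta\circ\alpha)$ for $\delta\in\operatorname{Aut}^{\otimes}(\omega\otimes_kR)$.

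Assembling this over all $K$-algebras $R$ gives, for each $R$, an isomorphism $\underline{\operatorname{Aut}}^{\otimes}(\omega_K)(R)\xrightarrow{\sim}\operatorname{Aut}^{\otimes}(\omega\otimes_kR)=H_K(R)$; naturality in $R$ follows from the naturality in $R$ of the isomorphisms $(\omega_K\otimes_KR)T_K\simeq\omega\otimes_kR$, and the maps respect the group laws because restriction (whiskering) of automorphisms along a fixed functor is multiplicative and conjugation is a homomorphism. This is the sought isomorphism $\underline{\operatorname{Aut}}^{\otimes}(\omega_K)\cong H_K$ of affine group $K$-schemes, i.e.\ the assertion of Corollary~\ref{cor: baseextension for omega}.

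The step I expect to be the main obstacle is the bookkeeping in the second paragraph: one must verify that the automorphisms of $\omega_K\otimes_KR$ reconstructed from the uniqueness clause are genuinely automorphisms of \emph{tensor} functors, that this reconstruction is functorial in $R$ and really inverts whiskering with $T_K$, and that $\mod_R$ (which is $K$-linear) is an admissible target for the universal property of scalar extension, as it is in \cite{Stalder:ScalarExtensionofAbelianandTannakianCategories}. All of this is soft but has to be pinned down carefully; the remainder of the argument is formal.
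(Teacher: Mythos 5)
Your proof is correct, but it takes a genuinely different route from the paper's. The paper disposes of this corollary in one line, by citing \cite[Theorem~3.1.7(d)]{Stalder:ScalarExtensionofAbelianandTannakianCategories} together with Theorem~\ref{theo: Tannakianmain}; that is, it treats the base-change behaviour of $\underline{\operatorname{Aut}}^{\otimes}$ under scalar extension of the category as an already-established external result. You instead re-derive that result from the universal property quoted as Theorem~\ref{theo: scalar extension for categories}, by computing the functor of points: for each $K$-algebra $R$ you apply the universal property to $\D=\mod_R$ and $F=\omega\otimes_k R$, and the existence and uniqueness clauses show that whiskering with $T_K$, conjugated by the structural isomorphism $(\omega_K\otimes_K R)T_K\simeq\omega\otimes_k R$, is a group isomorphism $\operatorname{Aut}^{\otimes}(\omega_K\otimes_K R)\to\operatorname{Aut}^{\otimes}(\omega\otimes_k R)=H(R)=H_K(R)$, naturally in $R$. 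This argument is sound; its one real dependency beyond what the paper literally states is the point you flag yourself, namely that the universal property must admit the target $\mod_R$, which is $K$-linear but has $\End(\1)=R$ rather than $K$, so it is not an abelian tensor category \emph{over} $K$ in the strict sense of the quoted statement. The stronger form of the universal property needed here does hold in \cite{Stalder:ScalarExtensionofAbelianandTannakianCategories}, so this is a gap of quotation rather than of substance. What your route buys is a self-contained and transparent identification of the $R$-points (essentially the same mechanism that underlies Corollary~\ref{cor: morphism of groups corresponds to functor} and its use in the proof of Theorem~\ref{theo: sTannakianmain}); what the paper's route buys is brevity, delegating the bookkeeping (tensor-compatibility of the reconstructed automorphisms, naturality in $R$) entirely to the cited source.
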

\begin{proof}
This follows from \cite[Theorem~2.11]{DeligneMilne:TannakianCategories} and \cite[Theorem~3.1.7(d)]{Stalder:ScalarExtensionofAbelianandTannakianCategories}.
\end{proof}

\begin{proof}[Proof of Theorem \ref{theo: sTannakianmain}] This is an analogue of \cite[Proposition~4.25]{GGO}. Let $\C^\sharp$ denote the tensor category obtained from $\C$ by forgetting $G$. Similarly, let $$\omega^\sharp\colon\C^\sharp\to\vect_k$$ denote the tensor functor obtained from $\omega$ by forgetting the difference structure. Then $\C^\sharp$ is a neutral Tannakian category over $k$ with fibre functor $\omega^\sharp$. We know from Theorem \ref{theo: Tannakianmain} that $$H^\sharp=\underline{\operatorname{Aut}}^\otimes(\omega^\sharp)$$ is an affine group scheme over $k$.
The crucial step now is to use the difference structure on $\C$ to put a difference structure on $H^\sharp$.

For an extension of fields $g\colon k\to K$, a $K$-linear category $\D$ is naturally a $k$-linear category, which we denote by $_g\D$.
The universal property from Theorem ~\ref{theo: scalar extension for categories} may be expressed by
\[\xymatrix{
 \C\ar^{T_{K}}[rr] \ar_{F}[rd] & & {\C\otimes_k K} \ar@{.>}^{\wtilde{F}}[ld] \\
  & {_gD=D} &
}
\]
Here $F\colon\C\to_g\D$ is $k$-linear and $\wtilde{F}\colon\C\otimes_k K\to D$ is $K$-linear.
Now let $g\in G$ and apply the above with $K=k$, $\C=\C^\sharp$, $\D=\C^\sharp$, $F=T(g)$, and
$$\hs(\C^\sharp)=\C\otimes_kK=\C^\sharp\otimes_k k,$$
where the tensor product uses $g\colon k\to k=K$ on the right-hand side.
We obtain the universal property
\[\xymatrix{
 \C^\sharp \ar^{T_g}[rr] \ar_{T(g)}[rd] & & {\hs(\C^\sharp)} \ar@{.>}^{\wtilde{T(g)}}[ld] \\
  & {_g(\C^\sharp)=\C^\sharp} &
}
\]
So there exists a right--exact $k$-linear tensor functor $\wtilde{T(g)}\colon{\hs(\C^\sharp)}\to{\C^\sharp}$ (which is not necessarily an equivalence of categories) and an isomorphism of tensor functors $$\alpha_g\colon \wtilde{T(g)}T_g\simeq T(g).$$ 
Hence, for all $g,h \in G$, we have isomorphisms of tensor functors:
$$
\alpha_{g,h} : \widetilde{T(g)}T_g\widetilde{T(h)}T_{k_h} \xrightarrow{\alpha_g\circ\alpha_h} T(g)T(h)\xrightarrow{c_{g,h}}T(gh)\xrightarrow{\alpha_{gh}^{-1}}\widetilde{T(gh)}\circ T_{gh}.
$$
As in Corollary \ref{cor: baseextension for omega}, $$T(g)\omega^\sharp\colon\C^\sharp\to\vect_k$$ induces a fibre functor
$${\hs(\omega^\sharp)}\colon {\hs(\C^\sharp)}\to\vect_k$$ and an isomorphism of tensor functors $$\beta_g :  {\hs(\omega^\sharp)}T_k  \simeq T(g)\omega^\sharp.$$
We have
\begin{equation}\label{eq:deltag}
\delta_g : \omega^\sharp\wtilde{T(g)}T_k\xrightarrow{\id_{\omega^\sharp}\circ\alpha_g}\omega^\sharp T(g)\xrightarrow{{\alpha_\omega}_g} T(g)\omega^\sharp,
\end{equation}
where  the given isomorphisms of tensor functors $${\alpha_\omega}_g : \omega\circ T(g)\to T(g)\circ\omega,\quad g \in G,$$
satisfy~\eqref{eq:Gfunctor} by the hypothesis of the theorem.
So, there exists a unique isomorphism of tensor functors
$$\gamma_g : {\hs(\omega^\sharp)}\simeq \omega^\sharp\wtilde{T(g)}$$ making
\[\xymatrix{
 {\hs(\omega^\sharp)}T_k \ar^{\gamma_g\circ\id_{T_k}}[rr] \ar_{\beta_g}[rd] & & \omega^\sharp\wtilde{T(g)}T_k \ar^{\delta_g}[ld] \\
  & T(g)\omega^\sharp &
}
\]
commutative.

Let ${\hs(H^\sharp)}$ be the affine group scheme over $k$ obtained from $H^\sharp$ by base extension via $g\colon k\to k$. By Corollaries~\ref{cor: morphism of groups corresponds to functor} and~\ref{cor: baseextension for omega}, the tensor functor $\wtilde{T(g)}\colon{\hs(\C^\sharp)}\to\C^\sharp$ and the isomorphism ${\hs(\omega^\sharp)}\simeq \omega^\sharp\wtilde{T(g)}$ correspond to a morphism of
group $k$-schemes $$m_g : H^\sharp\to{\hs(H^\sharp)}.$$
But to give a morphism of group $k$-schemes $H^\sharp\to{\hs(H^\sharp)}$ is really just the same thing as turning $H^\sharp$ into a group $k$-$g$-scheme $H$, as is best understood from the Hopf--algebraic point of view:
$$
M_g : k\big\{H^\sharp\big\}\xrightarrow{\id\otimes 1} k\big\{\hs(H^\sharp)\big\} = k\big\{H^\sharp\big\}\otimes_kk \xrightarrow{m_g^*}  k\big\{H^\sharp\big\}.
$$
Denote $A:= k\big\{H^\sharp\big\}$. For all $g \in G$, since $m_g$ is a morphism of group $k$-schemes, the Hopf algebra structure homomorphisms $\Delta : A\to A\otimes_k A$, $S : A \to A$ and $\varepsilon : A\to k$ commute with $M_g$.
Moreover, we have 
\begin{equation}\label{eq:Mgh}
M_{gh} = M_g\circ M_h,\quad g,h \in G,
\end{equation} by Corollary~\ref{cor: morphism of groups corresponds to functor}, because \[\xymatrix{
\omega^\sharp\widetilde{T(g)}T_g\widetilde{T(h)}T_h \ar^{\id_{\omega^\sharp}\alpha_{g,h}}[rr] \ar_{\gamma_{g,h}}[rd] & & \omega^\sharp \widetilde{T(gh)}T_{gh}  \ar^{\big(\gamma_{gh}\circ\id_{T_{gh}}\big)^{-1}}[ld] \\
  & {^{gh}(\omega^\sharp)T_{gh}} &
}
\]
commutes, where
\begin{align*}
\gamma_{g,h} &: \omega^\sharp\widetilde{T(g)}T_g\widetilde{T(h)}T_h \xrightarrow{\delta_g\id_{\widetilde{T(h)}T_h}} T(g)\omega^\sharp\widetilde{T(h)}T_h\xrightarrow{\id_{T(g)}\delta_h} T(g)T(h)\omega^\sharp\to\\
&\xrightarrow{c_{g,h}\id_{\omega^\sharp}} T(gh)\omega^\sharp\xrightarrow{\beta_{gh}^{-1}} {^{gh}(\omega^\sharp)T_{gh}}.
\end{align*}
That is, to prove~\eqref{eq:Mgh}, remains to show that
\begin{equation*}
\beta_{gh}^{-1}\circ\big(c_{g,h}\id_{\omega^\sharp}\big)\big(\id_{T(g)}\delta_h\big)\big(\delta_g\id_{\widetilde{T(h)} T_h}\big)=\beta_{gh}^{-1}\circ\delta_{gh}\circ\big(\id_{\omega^\sharp}\alpha_{gh}^{-1}\big)\big(\id_{\omega^\sharp}c_{g,h}\big)\big(\id_{\omega^\sharp}\alpha_g\alpha_h\big).
\end{equation*}
By~\eqref{eq:deltag}, the above is equivalent to
\begin{equation*}
\big(c_{g,h}\id_{\omega^\sharp}\big)\big(\id_{T(g)}\delta_h\big)\left(\delta_g\id_{\widetilde{T(h)} T_h}\right)={\alpha_\omega}_{gh}\circ\big(\id_{\omega^\sharp}c_{g,h}\big)\big(\id_{\omega^\sharp}\alpha_g\alpha_h\big),
\end{equation*}
which, by~\eqref{eq:Gfunctor}, is equivalent to
\begin{equation*}
\big(\id_{T(g)}\delta_h\big)\left(\delta_g\id_{\widetilde{T(h)} T_h}\right)=\big(\id_{T(g)}{\alpha_\omega}_h\big)\big({\alpha_\omega}_g\id_{T(h)}\big)\big(\id_{\omega^\sharp}\alpha_g\alpha_h\big),
\end{equation*}
which, by~\eqref{eq:deltag} again, is equivalent to
\begin{equation*}
{\alpha_\omega}_g\alpha_h=\big(\id_{T(g)\omega^\sharp}\alpha_h\big)\left({\alpha_\omega}_g\id_{\widetilde{T(h)} T_h}\right)=\big({\alpha_\omega}_g\id_{T(h)}\big)\left(\id_{\omega^\sharp T(h)}\alpha_h\right)={\alpha_\omega}_g\alpha_h,
\end{equation*}
concluding the argument showing~\eqref{eq:Mgh}.

The situation is summarized in the following diagram:
$$
\xymatrix{
 \C^\sharp \ar^{T_k}[r] \ar[d] \ar@/^2pc/^{T(g)}[rr] & {\hs(\C^\sharp)} \ar^{\wtilde{T(g)}}[r] \ar[d] & \C^\sharp \ar[d]   \\
 \rep(H^\sharp) \ar[r] \ar@/_2pc/_{T(g)}[rr]& \rep\big({\hs(H^\sharp)}\big) \ar[r] &  \rep(H^\sharp)
 }
$$
Since the two squares commute, up to an isomorphism of tensor functors, the outer rectangle also commutes up to an isomorphism of tensor functors. This shows that $\C\to\rep(H)$ is naturally a $G$-$\otimes$-functor and therefore an equivalence of $G$-$\otimes$-categories. It is then clear from Proposition \ref{prop: stannakaconsistent} that $H$ is isomorphic to $\underline{\operatorname{Aut}}^{G,\otimes}(\omega)$.
\end{proof}

\begin{rem}
Let $$G=\N^n\times \Zb/{n_1}\Zb\times\ldots\times\Zb/{n_r}\Zb,\quad n_j \geq 1,\ 1\leq j\leq r,$$ with a selected set $\{a_1,\ldots,a_m\}$, $m=n+r$, of generators corresponding to the decomposition.
Then Theorem \ref{theo: sTannakianmain} remains verbatim valid if we replace the definition of $G$-$\otimes$-tensor category as in Proposition \ref{prop:310}, the definition of $G$-$\otimes$-functor as in Proposition \ref{prop: good def of Gtensor functor} and the definition of morphism of $G$-$\otimes$-functors as in Lemma \ref{lemma: good def of morphism of Gtensorfunct}.
\end{rem}

\subsection{More on representations}\label{sec:extra}
\subsubsection{Explicit formula for semigroup action}\label{sec:explicitaction}
More explicitly, to obtain the $G$-Hopf algebra representing $\underline{\operatorname{Aut}}^{G,\otimes}(\omega)$, similarly to \cite[Section~6.3]{OvchRecoverGroup} and \cite[pp.~370--371]{GGO}, one can take the Hopf algebra $A$ that represents $\underline{\operatorname{Aut}}^{\otimes}(\omega)$: 
$$
A = \bigoplus_{V\in \Ob(\C)} \omega(V)\otimes_k \omega(V)^\vee\Big/R,
$$
where $R$ is the $k$-subspace spanned by
\begin{align*}
\left\{\left(\id\otimes \omega(\phi)^\vee - \omega(\phi)\otimes
\id\right)(z)\:\big|\ V,\:W \in \Ob(\C),\ \phi\in
\MorC(V,W),\ z \in \omega(V)\otimes \omega(W)^\vee\right\},
\end{align*}
and define the action of $G$ on $A$ as follows.
For $V \in \Ob(\C)$, let $v \in \omega(V)$ and $u\in\omega(V)^\vee$. For all $g\in G$, we  define $$T(g)(v\otimes u) \in \omega\big(T(g)(V)\big)\otimes_k\omega\big(T(g)(V)\big)^\vee$$ by:
$$
T(g)(v\otimes u) := (v\otimes 1)\otimes T(g)(u),\quad T(g)(u)(w\otimes a) := aT(g)(u(w)),\ \ w \in \omega(V),\ a\in k.
$$
For $A$ defined  as in \cite[pp.~370--371]{GGO}, one uses the same formula but conjugated by the isomorphism
$$
\varphi : \eta(V)\otimes_k\omega(V)^\vee \to \Hom_k(\omega(V),\eta(V)),\ \ \varphi(v\otimes u)(w) := u(w)v,\ \ v \in \eta(V),\ u\in\omega(V)^\vee,\ w\in\omega(V),
$$
where, for our purposes, $\eta = \omega$. 

\subsubsection{Characterization of  difference algebraic groups}
In this section, we will show how to recognize categories of representation of  $G$-algebraic groups among those of group $G$-schemes.

 Let $G = \langle S\:\: |\: R\rangle$ be a presentation with generators and relations. Recall that, for all $g \in G$, $l_{R,S}(g)$ is defined to be the length of a shortest presentation of $g$ as a product of the generators. For all $f \in k\{y_1,\ldots,y_n\}_G$, we define
$$
\ord_{R,S}(f) := \max_{g(y_i) \text{ appears in } f} l_{R,S}(g).
$$
For simplicity, in what follows, we assume that $R$ and $S$ are fixed and drop the subscript $R,S$ from $\ord$.

\begin{defi} We say that an object $V$ of a $G$-$\otimes$-category $\C$ is a $G$-$\otimes$-generator of $\C$ if the set of objects $\{T(g)(V)\:|\: g\in G\}$ generates $\C$ as a tensor category.
\end{defi}

A representation $\f\colon H\to\Gl(V)$ is called {\it faithful} if $\f^*\colon k\{\Gl(V)\}\to k\{H\}$ is surjective.
\begin{theo}\label{thm:fingen}
Let $H$ be a $G$-algebraic group. Then every faithful representations of $H$ $G$-$\otimes$-generates $\rep(H)$.
\end{theo}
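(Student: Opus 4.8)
The plan is to reduce the statement to a statement about the Hopf algebra $k\{H\}$ and then invoke the classical Tannakian dictionary. Let $\f\colon H\to\Gl(V)$ be a faithful representation. First I would forget the $G$-structure: as recalled at the beginning of Section~\ref{sec: Difference Tannakian categories}, the plain tensor category underlying $\rep(H)$ is equivalent to $\rep(H^\sharp)$, the category of finite-dimensional comodules over $A:=k\{H\}^\sharp$, and this equivalence leaves the matrix coefficients of each object unchanged, as elements of $A=k\{H\}$. By Tannaka duality for group schemes (Theorem~\ref{theo: Tannakianmain} and \cite[\S2]{DeligneMilne:TannakianCategories}), a set $\mathcal S$ of objects generates $\rep(H^\sharp)$ as a tensor category if and only if the matrix coefficients of the objects of $\mathcal S$ together with the matrix coefficients of their duals generate $A$ as a $k$-algebra. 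So it suffices to show that the matrix coefficients of the objects $T(g)(V)$, $g\in G$, and of their duals, generate $k\{H\}$ as a $k$-algebra.

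The second step is to identify those matrix coefficients explicitly. Fix a basis of $V$, let $(a_{ij})\in\operatorname{Mat}_n(k\{H\})$ be the coaction matrix, so that the $a_{ij}$ are the matrix coefficients of $V$, and put $b_{ij}:=S(a_{ji})$, the matrix coefficients of $V^\vee$ in the dual basis, where $S$ is the antipode; the antipode axiom gives $(b_{ij})=(a_{ij})^{-T}$ in $\operatorname{Mat}_n(k\{H\})$. From the description of the $G$-action on comodules in Section~\ref{subsec:action} — where the twist $R_g^*$ absorbs the base change along $g\colon k\to k$ — the matrix coefficients of $T(g)(V)$ are precisely the $T(g)(a_{ij})$. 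Since $T(g)$ is a tensor functor it preserves duals, so $T(g)(V)^\vee\cong T(g)(V^\vee)$, whose matrix coefficients are the $T(g)(b_{ij})$. Hence the $k$-subalgebra $B\subseteq k\{H\}$ generated by the matrix coefficients of all $T(g)(V)$ and of their duals is exactly the $k$-$G$-subalgebra of $k\{H\}$ generated by $\{a_{ij}\}\cup\{b_{ij}\}$.

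The third step uses faithfulness to conclude $B=k\{H\}$. By definition $\f^*\colon k\{\Gl(V)\}\to k\{H\}$ is surjective, and $k\{\Gl(V)\}=k\{x_{11},\ldots,x_{nn},1/\det(x_{ij})\}_G$ is generated as a $k$-$G$-algebra by the $x_{ij}$ and by $1/\det(x_{ij})$; therefore $k\{H\}$ is generated as a $k$-$G$-algebra by $\{a_{ij}=\f^*(x_{ij})\}$ together with $\f^*(1/\det(x_{ij}))=\det((a_{ij}))^{-1}=\det((b_{ij}))$, the last expression being a polynomial in the $b_{ij}$ and hence an element of $B$. Thus the $k$-$G$-subalgebra $B$ of $k\{H\}$ contains a set of $k$-$G$-algebra generators of $k\{H\}$, so $B=k\{H\}$, and the first step finishes the proof.

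The conceptual content is light; the two points requiring care are: (i) verifying that the matrix coefficients of $T(g)(V)$ and of $T(g)(V)^\vee$ are $T(g)(a_{ij})$ and $T(g)(b_{ij})$ — equivalently, that the base change along $g\colon k\to k$ built into $T(g)(V)$ introduces no $k$-scalars beyond those already recorded in the $G$-action on $k\{H\}$, which is exactly what the formula for $g(\rho)$ in Section~\ref{subsec:action} says — and (ii) trading the generator $1/\det(x_{ij})$ for the matrix coefficients of $V^\vee$ via $(b_{ij})=(a_{ij})^{-T}$. It is also worth noting that the hypothesis that $H$ be a $G$-algebraic group is not actually used in the argument: the existence of a faithful representation already forces $k\{H\}$ to be finitely generated as a $k$-$G$-algebra.
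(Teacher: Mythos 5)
Your argument is correct, but it is organized quite differently from the paper's. The paper adapts the classical Waterhouse/Deligne--Milne construction directly to the difference setting: it embeds an arbitrary comodule $U$ into $k\{H\}^m$, shows each projection lands in the image of an explicit filtration piece $L_{r,s,p}$ of $k\{x_{11},\ldots,x_{nn},1/\det\}_G$ (filtered by determinant power $r$, degree $s$, and difference order $p$), and then exhibits each $L_{r,s,p}$ as built from the $T(g)(V)$ by tensor products, symmetric quotients, and a twist by $\det^\vee$. You instead reduce everything to the standard comodule-theoretic dictionary --- a family of objects tensor-generates $\rep(H^\sharp)$ if and only if their matrix coefficients together with those of their duals generate $k\{H\}$ as a $k$-algebra --- after which the theorem collapses to the observation that the coefficients of $\{T(g)(V)\}_{g\in G}$ are exactly $\{g(a_{ij})\}$ (correct, by the formula for $g(\rho)$ via $R_g^*$ in Section~\ref{subsec:action}), that $T(g)$ preserves duals, and that surjectivity of $\f^*$ hands you a $k$-$G$-algebra generating set lying in the resulting $k$-$G$-subalgebra, with $1/\det(a_{ij})=\det(b_{ij})$ absorbing the localization. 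This is a clean and valid reduction; what it buys is that all the difference-specific bookkeeping (the order filtration, the objects $T_p(V)$) disappears into a one-line Hopf-algebra computation. What you should be aware of is that the substantive direction of your cited dictionary (coefficients generate $\Rightarrow$ tensor-generates) is proved by essentially the same embedding-of-$U$-into-$A^m$ device that constitutes the body of the paper's proof, so you have not avoided that work, only outsourced it to a standard reference; since that lemma is genuinely classical and independent of the difference structure, there is no circularity. Your closing remark that $G$-algebraicity of $H$ is not needed --- faithfulness alone forces $k\{H\}$ to be finitely $G$-generated --- is also correct and is consistent with the corollary the paper draws from this theorem.
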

\begin{proof}  This proof closely follows the proof of \cite[Proposition~1]{OvchRecoverGroup}. Let $U$ be an $A := k\{H\}$-comodule.
By \cite[Lemma~3.5]{Waterhouse:IntroductiontoAffineGroupSchemes},
 $U$ is an $A$-subcomodule of
$U\otimes_k A \cong A^m$, $\rho_{U\otimes A} := \id_U\otimes\Delta$.
The canonical projections $\pi_i : A^m \to A$
 are $H$-equivariant  (with respect to the comultiplication $\Delta : A\to A\otimes A$).
Since $U \subset A^m$, we have
$$
U \subset \bigoplus_{i=1}^m \pi_i(U),
$$
and each $\pi_i(U)$ is an $A$-comodule.
Let $(V,\f)$ be a faithful representation of $H$ and fix a basis $v_1,\ldots,v_n$ of $V$. Let $$\pi=\f^*: B := k\big\{x_{11},\ldots,x_{nn},1/\det\big\}_G \to A$$ be the corresponding surjection of $k$-$G$-Hopf algebras.
Since $\pi_i\left(U\right)$ is a finite-dimensional $A$-subcomodule of $A$,
there exist $r, s, p \in \Zb_{\ge 0}$ such that $\pi_i\left(U\right)$ is contained
in $\pi(L_{r,s,p}),$ where
$$L_{r,s,p} := (1/\det)^r\big\{f(x_{ij})\:|\:\deg(f)\le s, \ord(f) \le p\big\}.$$
The comultiplication of $B$ is given by, 
for all $i,j$, $1\le i,j\le n$, 
$$
\Delta\big(x_{ij,g}\big) = \sum_{l=1}^n x_{il,g}\otimes x_{lj,g},\quad  g \in G,
$$ and $L_{r,s,p}$ is a $B$-subcomodule of $B$, because
$$
\Delta\big(x_{ij}x_{pq}\big) = \sum_{l,r = 1}^nx_{il}x_{pr}\otimes x_{lj}x_{rq}\quad\text{and}\quad \ord(f_1f_2) = \max\big\{\ord(f_1),\ord(f_2)\big\}.$$
Hence, $L_{r,s,p}$
is also an $A$-subcomodule of $B$. Therefore, each $\pi_i(U)$ is a
subquotient of some $L_{r,s,p}$. Thus, we only need to show how
to construct these $L_{r,s,p}$ from $V$. 
For each $i,$ $1 \le i \le n$, the map $\varphi_i : V\to B$, $v_j \mapsto x_{ij}$ is
$\GL_n$ (hence, $H$)-equivariant, because
$$
(\varphi_i\otimes\id)\big(\rho_V(v_j)\big) =(\varphi_i\otimes\id)\left(\sum_{l=1}^nv_l\otimes x_{lj}\right)
= \sum_{l=1}^nx_{il}\otimes x_{lj} = \Delta(x_{ij})
= \rho_B\big(\varphi_i(v_j)\big).
$$ 
Every $f \in L_{0,1,p}$ is of the form
$$
f = \sum_{i,j=1}^n\sum_{g \in S_f}c_{ij}x_{ij,g},\quad c_{ij} \in k,
$$
for some finite $S_f \subset G$ such that, for all $h \in S_f$, $\ord(h) \le p$.
As it has been noticed above, this space is an
$A$-subcomodule of $B$. The map $(\varphi_1,\ldots,\varphi_n)$ induces
$$\left(T_p(V)\right)^n\cong L_{0,1,p},\quad T_p(V):=\bigoplus_{g\in G \atop l_{R,S}(g)\leq p}{T(g)(V)},$$ as $A$-comodules, not necessarily finite-dimensional.
Hence, one can construct
$L_{0,1,p}$.

Let $s \in \Zb_{\ge 2}$. The $A$-comodule $L_{0,s,p}$ is
the quotient of $\left(L_{0,1,p}\right)^{\otimes s}$ by the symmetric relations. So, we have
all $L_{0,s,p}$. Let now $s = n = \dim_k V$. Then the one-dimensional
representation $\det : H \to \Gl_1$ with $h \mapsto \det(h)$ is in $L_{0,n,p}$.
For $f \in k^\vee$, we have
$$
\det(h)(f)(x) = f(x/\det(h))=\frac{1}{\det(h)}f(x).
$$
Thus,
$$
L_{r,s,p} = ({\det}^*)^{\otimes r}\otimes L_{0,s,p},
$$
which
is what we wanted to construct.
\end{proof}

\begin{cor}
Let $H$ be a group $k$-$G$-scheme. Then $H$ is $G$-algebraic if and only if $\rep(H)$ has a $G$-$\otimes$-generator.
\end{cor}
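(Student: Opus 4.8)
The forward implication follows from Theorem~\ref{thm:fingen} once a faithful representation is exhibited, and the converse is a Tannakian reconstruction argument. For ($\Rightarrow$), suppose $H$ is $G$-algebraic, so that $A:=k\{H\}$ is generated as a $k$-$G$-algebra by finitely many elements $b_1,\ldots,b_n$. Since $A$, viewed as a comodule over itself via $\Delta$, is the union of its finite-dimensional subcomodules (cf.\ \cite{Waterhouse:IntroductiontoAffineGroupSchemes}), there is a finite-dimensional subcomodule $W\subseteq A$ containing $b_1,\ldots,b_n$. Let $\f\colon H\to\GL(W)$ be the associated representation; fixing a basis $w_1,\ldots,w_q$ of $W$ and writing $\Delta(w_j)=\sum_i w_i\otimes a_{ij}$, the matrix coefficients $a_{ij}=\f^*(x_{ij})$ lie in $\im(\f^*)$, and applying $\varepsilon\otimes\id$ gives $w_j=\sum_i\varepsilon(w_i)a_{ij}\in\im(\f^*)$. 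As $\f^*$ is a morphism of $k$-$G$-algebras, $\im(\f^*)$ is a $k$-$G$-subalgebra of $A$ containing $b_1,\ldots,b_n$, hence equals $A$; thus $\f$ is faithful, and Theorem~\ref{thm:fingen} shows that $W$ is a $G$-$\otimes$-generator of $\rep(H)$. (The only point to notice is that $W$ must capture merely the finitely many $G$-algebra generators of $k\{H\}$, not all of their $G$-translates.)

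For ($\Leftarrow$), suppose $V$ is a $G$-$\otimes$-generator of $\rep(H)$ with associated representation $\f\colon H\to\GL(V)$, and put $B:=\im(\f^*)\subseteq A$. Since $\f^*$ is a morphism of $k$-$G$-Hopf algebras, $B$ is a $k$-$G$-Hopf subalgebra of $A$, and since $k\{\GL(V)\}=k\{x_{11},\ldots,x_{nn},1/\det\}_G$ is finitely generated as a $k$-$G$-algebra, so is $B$. Hence $B=k\{H'\}$ for a $G$-algebraic group $H'$, and the factorization $H\to H'\hookrightarrow\GL(V)$ realizes $H'$ as the image of $\f$; it therefore suffices to prove $B=A$. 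The inclusion $B\hookrightarrow A$ induces a fully faithful $G$-$\otimes$-functor $\rep(H')\hookrightarrow\rep(H)$ whose essential image is closed under subquotients. This image contains $V$, since $\rho_V(V)\subseteq V\otimes\Span_k\{a_{ij}\}\subseteq V\otimes B$; and it contains every $T(g)(V)$, because $B$ is $G$-stable and, by the description in Section~\ref{subsec:action}, the comodule $T(g)(V)$ has matrix coefficients $g(a_{ij})\in B$. Since $\{T(g)(V):g\in G\}$ generates $\rep(H)$ as a tensor category and $\rep(H')$ is a $G$-stable tensor subcategory closed under subquotients, we conclude $\rep(H')=\rep(H)$. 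This equivalence is compatible with the forgetful $G$-fibre functors (Example~\ref{ex: repG is sTannakian}), so by Theorem~\ref{theo: sTannakianmain} together with Proposition~\ref{prop: stannakaconsistent} we get $H\cong H'$; in particular $H$ is $G$-algebraic.

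I expect the main obstacle to lie in the converse, specifically in the $G$-equivariant bookkeeping: verifying that the embedding $\rep(H')\hookrightarrow\rep(H)$ attached to a $G$-Hopf subalgebra is fully faithful with subquotient-closed essential image (the difference analogue of the classical statement, cf.\ \cite{DeligneMilne:TannakianCategories}), and that the resulting equivalence of $G$-$\otimes$-categories really does descend to an isomorphism of group $k$-$G$-schemes via Theorem~\ref{theo: sTannakianmain}. None of this is deep, but it is the part where care is required; the forward direction is entirely routine.
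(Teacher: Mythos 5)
Your proof is correct. The forward direction is exactly what the paper intends: from finite $G$-generation of $k\{H\}$ you extract a finite-dimensional subcomodule $W$ containing the $G$-algebra generators, check via the counit trick that $\im(\f^*)$ is a $k$-$G$-subalgebra containing them (hence all of $k\{H\}$), and feed the resulting faithful representation into Theorem~\ref{thm:fingen}. For the converse the paper simply cites \cite[Proposition~A.2]{GGO} together with the explicit presentation of the Hopf algebra in Section~\ref{sec:explicitaction}, i.e.\ it reads off finite $G$-generation of $A$ from the fact that $A$ is spanned by the matrix coefficients of the objects generated by $\{T(g)(V)\}$; you instead run the dual, self-contained argument through the image Hopf subalgebra $B=\im(\f^*)\subseteq A$, showing the subcategory of comodules with coefficients in $B$ is a $G$-stable tensor subcategory closed under subquotients containing all $T(g)(V)$, hence all of $\rep(H)$, whence $B=A$. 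The two routes are the standard dual formulations of the same fact; yours has the advantage of not outsourcing the key step, at the cost of having to verify the subquotient-closure and $G$-stability bookkeeping you flag (both of which do hold: coefficient spaces of subquotients shrink, and the comodule $T(g)(V)$ has coefficients $g(a_{ij})\in B$ by the formula for $g(\rho)$ in Section~\ref{subsec:action}). One could also shortcut your final Tannakian reconstruction step: once every object of $\rep(H)$ has coefficients in $B$, apply this to the finite-dimensional subcomodules exhausting $A$ itself and use the counit to get $A\subseteq B$ directly.
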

\begin{proof}
This follows from Theorem~\ref{thm:fingen} using \cite[Proposition~A.2]{GGO} and Section~\ref{sec:explicitaction}.
\end{proof}

\section*{Acknowledgments} The authors are grateful to S. Gorchinskiy for his helpful comments about the final draft of the paper and suggestions to consider the approach from~\cite{Deligne1997}, to separately treat the cases of free products of semigroups, in which the action of the product is fully determined by the action of each of the components of the product, and free finitely generated abelian semigroups, and, for the latter case, to use the system of isomorphisms that correspond to the commutativity condition, which is more convenient for the applications than the set of conditions that the authors used originally. The authors are also grateful to M.F. Singer for his important remarks.
\setlength{\bibsep}{1.3pt}
\bibliographystyle{abbrvnat}
\small
\bibliography{bibdata}

\end{document}